\newcommand{\EQ}[1]{\begin{equation}\begin{split} #1 \end{split}\end{equation}}
\newcommand{\EQN}[1]{\begin{equation*}\begin{split} #1 \end{split}\end{equation*}}
\newcommand{\CAS}[1]{\begin{cases} #1 \end{cases}}
\newcommand {\brk}[1]{\left(#1\right)}
\newcommand {\les}{\lesssim}
\newcommand {\C}{{\mathbb C}}
\newcommand {\F}{\mathcal{F}}
\newcommand {\rb}{\rangle}
\newcommand {\lb}{{\langle}}
\newcommand {\norm}[1]{\left\|#1\right\|}
\newcommand {\R}{\mathbb R}
\newcommand {\supp}{\mathrm{supp}}
\newcommand {\Sw}{\mathcal{S}}
\newcommand {\Z}{\mathbb Z}
\newcommand {\vanish}[1]{\relax}
\newcommand {\TT}{{\mathcal T}}
\newcommand {\RR}{{\mathcal R}}
\newcommand{\wh}{\widehat}
\newcommand {\Wone}{W_{1}}
\newcommand {\Wtwo}{W_{2}}
\newcommand {\Wthr}{W_{3}}
\newcommand {\Uone}{U_{1}}
\newcommand {\Utwo}{U_{2}}
\newcommand {\Uthr}{U_{3}}
\newcommand {\aij}{a_{ij}}
\newcommand {\Akthr}{A_{k_{2}}^{(3)}}
\newcommand {\Akkthr}{A_{k,k_{2}}^{(4)}}
\newcommand {\Bkkthr}{\tilde{A}_{k,k_{2}}^{(4)}}
\newcommand {\Akone}{A_{k_{1}}^{(2)}}
\newcommand {\aktwo}{A_{k_{2}}^{(1)}}
\newcommand {\bkone}{\frac{\psi_{k_{1}}(\xi-\eta)}{\phi_{1}(\xi,\eta,\zeta)}}
\newcommand {\Phione}{\Phi_{1}}
\newcommand {\Phitwo}{\Phi_{2}}
\newcommand {\Phithr}{\Phi_{3}}
\newcommand {\Phifo}{\Phi_{4}}
\newcommand {\Phifi}{\Phi_{5}}
\newcommand {\Phisi}{\Phi_{6}}
\newcommand {\phione}{\phi_{1}}
\newcommand {\Gammaone}{\Gamma_{ij}^{(1)}}
\newcommand {\Gammatwo}{\Gamma_{ij}^{(2)}}
\newcommand {\Gammathree}{\Gamma_{ij}^{(3)}}
\newcommand {\MI}{\mathcal{I}}
\newcommand {\MJ}{\mathcal{J}}
\newcommand {\Hs}{H^{2}}
\newcommand {\PW}{F}
\newcommand {\PU}{G}
\newtheorem{theorem}{Theorem}[section]
\newtheorem{lemma}[theorem]{Lemma}
\newtheorem{proposition}[theorem]{Proposition}
\newtheorem{corollary}[theorem]{Corollary}
\theoremstyle{definition}
\newtheorem{definition}[theorem]{Definition}
\newtheorem{remark}[theorem]{Remark}
\numberwithin{equation}{section}
\protected\def\ignorethis#1\endignorethis{}
\let\endignorethis\relax
\begin{document}

\title[Global solutions for quadratic NLS]{Global solutions to 3D quadratic nonlinear Schr\"odinger-type equation}

\author[Z. Guo]{Zihua Guo}
\address{School of Mathematics, Monash University, VIC 3800, Australia }
\email{zihua.guo@monash.edu}
	
\author[N. Liu]{Naijia Liu}
\address{School of Mathematics, Sun Yat-sen University
Guangzhou, 510275, China}
\email{liunj@mail2.sysu.edu.cn}

\author[L. Song]{Liang Song}
\address{School of Mathematics, Sun Yat-sen University
Guangzhou, 510275, China}
\email{songl@mail.sysu.edu.cn}

\subjclass[2020]{ 35Q53, 35Q55, 35E15}

\keywords{3D quadratic Schr\"odinger equation, Global solutions}
	
\begin{abstract}
We consider the Cauchy problem to the 3D fractional Schr\"odinger equation with quadratic interaction of $u\bar u$ type.  We prove the global existence of solutions and scattering properties for small initial data.  For the proof, one novelty is that we combine the normal form methods and the space-time resonance methods.  Using the normal form transform enables us to have more {flexibility} in designing the resolution spaces so that we can control various interactions. It is also convenient for the final data problem.
\end{abstract}
	
\maketitle	

\tableofcontents

\section{Introduction}\label{sec1}
\setcounter{equation}{0}

In this paper, we study the Cauchy problem to the 3D quadratic nonlinear Schr\"odinger-type equation
\EQ{\label{e1.1}
\CAS{(\partial_{t}+iD^{\alpha})u = \rho u\bar{u}, \\
{u(0) = u_{0}},
}
}
where $u(t,x):\R\times \R^3 \to \C$ is the unknown function, $u_0$ is a given data, $\alpha\in (1,2)$, and $D^{\alpha}$ is defined via the Fourier multiplier:
$$
D^{\alpha}f(x):=(2\pi)^{-3/2}\int_{\R^3} e^{ix\xi}|\xi|^{\alpha}\hat{f}(\xi)d\xi.
$$
{Here $\rho\in \C$} and plays no role in this paper.  We may {assume $\rho=1$.}

When $\alpha=2$, we have $D^{\alpha}=-\Delta$. Then \eqref{e1.1} becomes the quadratic nonlinear Schr\"odinger equation (NLS). The global existence of the solutions for the following NLS
\EQ{\label{eq:NLS}
(\partial_{t}-i\Delta)u =& F(u), \quad (t,x)\in \R\times \R^d
}
where $|F(u)|\sim |u|^{p+1}$, in particular the Hamiltonian nonlinearity $F(u)=i\mu|u|^pu$, has been extensively studied.  We do not attempt to exhaust the list of literature of the studies, but refer to the nice introduction of \cite{NY}.
The NLS \eqref{eq:NLS} is invariant under the scaling transform: for $\tau>0$
\EQ{
u(t,x)\to \tau^{2/p} u(\tau^2 t, \tau x).
}
The critical Sobolev space is $\dot H^{s_c}$ where $p=\frac{4}{d-s_c}$ in the sense that the norm of $\dot H^{s_c}$ is invariant under the above scaling transform. There are three important indices for the study of global existence of solutions: mass-critical index $p_0=\frac{4}{d}$, Strauss index $p_1(d)=\frac{2-d+\sqrt{d^2+12d+4}}{2d}$ and scattering-critical index $p_2(d)=\frac{2}{d}$.  Note that $p_1(3)=1$.

For the quadratic nonlinear terms $F(u)=\alpha_1 u^2+\alpha_2 u\bar u+\alpha_3 \bar u \bar u$, which appear in many physical models, have the same scaling invariance as the nonlinear term $i|u|u$. However, the methods for $i\mu|u|u$ do not work in general for $F(u)$, as $i\mu|u|u$ has some special gauge-invariant and Hamiltonian structures. On the other hand, compared to the nonlinearity $i|u|u$, $F(u)$ has better algebraic structures and smoothness. Some new methods are developed to exploit the nonlinear interaction structures. When $\alpha_2=0$, small data global existence was proved by Germain-Masmoudi-Shatah \cite{GMS} via the space-time resonance method. When $\alpha_2\neq 0$, the space-time resonance structure of $u\bar u$ is worse (as explained below) and to our knowledge the small data global-existence is still an open question.  Ikeda-Inui \cite{IInui} showed the existence of blow-up solutions for a class of small $L^2$ initial data, which decays at rate $\frac{1}{|x|^{2-\epsilon}}$ as $|x|\to \infty$, $0<\epsilon<1/2$.  In \cite{GHay}, Ginibre and Hayashi proved the almost global existence using the vector fields method.  See \cite{HNau}, \cite{Kawa}, \cite{W}, and \cite{Su} for related results. In particular, in \cite{W}, Wang gave an alternative proof using the space-time resonance method. The difficulty for showing the global existence is some logarithmic divergence problem due to the $high\times high\to 0$ interactions. Indeed, it was shown in \cite{W} that small data global existence holds if $u\bar u$ is replaced by some similar nonlinearity $Q(u,\bar u)$ with some null-structure, e.g. $Q(u,\bar u)\sim D^{\epsilon}(u\bar u)$. See \cite{Su} for similar results for general 3D quadratic systems.

The purpose of this paper is to study the small data global existence for the equation \eqref{e1.1}.  We keep the nonlinearity $u\bar u$, but with a general dispersion. Our results show that the Schr\"odinger dispersion (in particular for low frequency) is really critical for $u\bar u$.
Our main results can be roughly described as follows (We refer to Theorem \ref{thm:main} for the precise version):

\begin{theorem}\label{thm1.1}
Let $\alpha\in (1,2)$. Assume that $u_0$ is sufficiently small in suitable space, then there exists a unique global solution $u$ to \eqref{e1.1}.  Moreover, {$\norm{u(t)}_{L^\infty}\les (1+|t|)^{-1-}$} and scattering holds.
\end{theorem}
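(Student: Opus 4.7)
I would pass to the profile $f(t) := e^{itD^\alpha}u(t)$, so that in Fourier
\[
\partial_t \hat f(\xi,t) = \lambda\int_{\R^3} e^{it\Phi(\xi,\eta)}\, \hat f(\eta, t)\,\overline{\hat f(\eta-\xi, t)}\,d\eta,\qquad \Phi(\xi,\eta)=|\xi|^{\alpha}+|\xi-\eta|^{\alpha}-|\eta|^{\alpha},
\]
and combine the normal form method with a space-time resonance analysis of $\Phi$ in the spirit of \cite{GMS}. A short computation shows that $\{\Phi=0\}$ already contains the whole hyperplane $\{\xi=0\}$, and that $\nabla_\eta\Phi=0$ forces $|\eta|=|\xi-\eta|$ with opposite directions, i.e.\ $\xi=0$ as well. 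The space-time resonance locus therefore collapses to $\{\xi=0\}$, where in fact $\Phi$ vanishes identically---this is precisely the high$\times$high$\to$low obstruction for $u\bar u$ recalled after \eqref{eq:NLS}.

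\textbf{Normal form.} Away from a neighborhood of $\{\xi=0\}$ the modulation $|\Phi|$ is bounded below by a positive power of the largest frequency, so integrating by parts in $s$ in the Duhamel formula produces a bilinear boundary term $B(f,\bar f)$ with symbol essentially $1/\Phi$ (cut off to the non-resonant region), plus cubic remainders in which the time derivative falls back onto $f$. Setting $v:=f-B(f,\bar f)$ turns the equation into a cubic one, modulo a residual quadratic piece supported near $\xi=0$. The flexibility in the resolution space advertised in the abstract is precisely that, after the normal form, one only needs a norm strong enough to control the cubic remainders and this residual low-frequency quadratic piece, rather than the worst-case bilinear interaction at all frequencies.

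\textbf{Resolution space and bootstrap.} A natural ansatz is
\[
\|u\|_{X_T} := \sup_{0\le t\le T}\Bigl((1+t)^{1+\delta}\|u(t)\|_{L^\infty_x} + \|f(t)\|_{H^N} + \||x|^{\sigma}f(t)\|_{L^2}\Bigr),
\]
with small $\delta>0$ and a weight $\sigma=\sigma(\alpha)$ chosen so that a Klainerman-Sobolev type inequality
\[
\|e^{-itD^\alpha}g\|_{L^\infty_x}\les (1+t)^{-1-\delta}\bigl(\|g\|_{H^N}+\||x|^\sigma g\|_{L^2}\bigr)
\]
is available, exploiting the dispersive bound for $e^{-itD^\alpha}$ with $\alpha\in(1,2)$. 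Under the bootstrap $\|u\|_{X_T}\le 2\varepsilon\ll 1$, the cubic terms produced by the normal form are handled by Coifman-Meyer bilinear/trilinear symbol bounds combined with the $L^\infty_x$/weighted-$L^2$ interpolation built into $X_T$; the residual quadratic term near $\xi=0$ is controlled by space-resonance integration by parts in $\eta$ against the weight $|x|^\sigma$. Scattering follows by running the same estimates on the time increment $f(t_2)-f(t_1)$ to show Cauchyness in $L^2_x$, producing $f_\infty$ with $u(t)-e^{-itD^\alpha}f_\infty\to 0$; the final-data problem is then handled by the same scheme run backward from $f_\infty$, which is where using the cubic variable $v=f-B(f,\bar f)$ is especially convenient.

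\textbf{Main obstacle.} The decisive difficulty is the residual low-frequency quadratic piece, supported on $|\xi|\ll\min(|\eta|,|\xi-\eta|)$: it cannot be eliminated by normal form (since $\Phi\equiv 0$ on $\{\xi=0\}$) and sits exactly on the space-time resonance set. For $\alpha=2$ it generates a logarithmic divergence and the global problem remains open; for $\alpha\in(1,2)$ the fractional dispersion yields a strictly better low-output-frequency decay than the Schr\"odinger $t^{-3/2}$ rate, and extracting this gain cleanly inside the weighted-$L^2$ norm is what should close the bootstrap. Most of the technical work should go into this multilinear estimate and into tuning the weight $\sigma$ so that it both handles this term and is reproduced by the cubic remainders.
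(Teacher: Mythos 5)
Your high-level strategy --- pass to the profile, combine a normal form with space-time resonance, close a bootstrap in a space built from $(1+t)^{1+\delta}L^\infty_x$, $H^N$, and weighted-$L^2$ control of the profile, and isolate the low-output-frequency quadratic interaction as the decisive obstruction whose resolution uses the improved low-frequency dispersion for $\alpha<2$ --- is exactly the architecture of the paper. However, there is a concrete error in the structural claim that drives your normal form.

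You assert that away from a neighborhood of $\{\xi=0\}$ the modulation $|\Phi|$ is bounded below by a power of the largest frequency, so one can integrate by parts in $s$ globally off a low-output cutoff. This is false. With $\Phi(\xi,\eta)=|\xi|^\alpha+|\xi-\eta|^\alpha-|\eta|^\alpha$ and $\eta$ the holomorphic input, the zero set $\{\Phi=0\}$ contains a full hypersurface inside the region where the holomorphic input is the \emph{high} frequency and the anti-holomorphic input is low (take for instance $\xi=e_1$, $\eta=(1.5,h,0)$: $\Phi$ changes sign as $h$ varies, so it vanishes at some $h$, and this is far from $\xi=0$). What is true is that the \emph{joint} space-time resonance set $\{\Phi=0\}\cap\{\nabla_\eta\Phi=0\}$ collapses to $\{\xi=0\}$; elsewhere at least one of $\Phi$ or $\nabla_\eta\Phi$ is nondegenerate, but not always $\Phi$. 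If you apply a normal form with symbol $\sim\Phi^{-1}$ outside a small ball in $\xi$, the symbol is singular on that hypersurface and the Coifman--Meyer bounds you invoke no longer apply; the whole bootstrap breaks at the bilinear level. The correct split, and the one the paper uses, is to apply the normal form only on the genuinely non-resonant region where the anti-holomorphic factor is the high frequency (the paper's $(u\bar u)_{LH}$, with $|\phi|\sim|\xi|^\alpha$ uniformly), and to treat the remaining HH and HL pieces quadratically, extracting the decay by integrating by parts in $\eta$ (space resonance), using that $\nabla_\eta\phi$ is elliptic there together with Bernstein in the output frequency. The paper's Lemmas 3.8--3.9 and 4.5--4.8 are exactly this structure.

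A secondary difference worth noting: the paper's resolution space is not a single flat weighted-$L^2$ norm $\||x|^\sigma f\|_{L^2}$ but a frequency-localized $\sup_k$ of weighted shell norms with up to two derivatives in $\hat\xi$, and it uses two such norms $W\supsetneq U$ for the transformed variable $w$ and the original $u$ respectively --- this dual-variable setup is what makes the cubic remainder estimates close, since $u$ only enters the $w$-equation at cubic order. You may well need this extra structure: a single global weight $\sigma$ is likely to be too rigid to both absorb the low-output Bernstein gains and be reproduced under the trilinear interactions. The rest of your outline (the role of $\alpha\in(1,2)$ in improving low-frequency decay, scattering via Cauchyness of the profile, the final-data problem being cleanest in the cubic variable) aligns with the paper.
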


In the rest of the introduction, we would like to describe our methods of proof. Consider the general 3D quadratic dispersive system
\EQ{\label{eq:qDS}
(\partial_t+L_1)u_1=& B_1(u_1,u_2)\\
(\partial_t+L_2)u_2=& B_2(u_1,u_2)\\
(u_1,u_2)|_{t=0}=&(f_1,f_2)
}
where $L_j f=\F^{-1}i\omega_j(\xi)\F f$, $j=1,2$, with dispersion $\omega_j(\xi): \R^3\to \R$, and $B_j(u_1,u_2)$ are the Coifman-Meyer bilinear Fourier multiplier operators
\EQ{
\F[B_j(u_1,u_2)](\xi)=\int_{\xi=\xi_1+\xi_2} m_j(\xi_1,\xi_2)\wh {u_1}(\xi_1)\wh{u_2}(\xi_2)d\mu, \quad j=1,2.
}
Here $u_j$ could be replaced by its conjugate $\bar u_j$.

It is now well-known that the nonlinear interaction structures of the equation \eqref{eq:qDS} play a decisive role on the behaviour of its solutions, e.g. well-posedness and large time behaviour. A  powerful approach is the perturbation method. That is, to view the nonlinearity as a perturbation to the linear equations under proper sense.  This requires very delicate work to design the suitable topology. Consider the equivalent integral equation of \eqref{eq:qDS}
\EQ{\label{eq:qDS-int}
u_j(t)=S_j(t)f_j+\int_{0}^{t} W_j(t-s)B_j(u_1,u_2)(s)ds, \quad j=1,2,
}
where $S_j(t)=e^{-tL_j}=\F^{-1} e^{-it\omega_j(\xi)}\F$. Define the sequence of iteration: $u^{(0)}=0$ and
\EQ{
u_j^{(n+1)}(t)=S_j(t)f_j+\int_{0}^{t} S_j(t-s)B_j(u_1^{(n)},u_2^{(n)})(s)ds, \quad n\geq 1.
}
We would like to derive some compactness properties of the sequence $\{u_j^{(n)}\}$. We can see some nonlinear interactions through the second iteration. The second iteration
\EQ{
u_j^{(2)}(t)=S_j(t)f_j+\int_{0}^{t} S_j(t-s)B_j(u_1^{(1)},u_2^{(1)})(s)ds
}
implies 
\EQ{
e^{it \omega_j(\xi)}\F [u_j^{(2)}](t,\xi)-\wh{f_j}(\xi)=\int_0^t \brk{\int_{\R^3} e^{is \phi(\xi,\eta)}\wh{f_1}(\eta)\wh{f_2}(\xi-\eta)\, d\eta} ds
}
where $\phi(\xi,\eta)=\omega_j(\xi)-\omega_1(\eta)-\omega_2(\xi-\eta)$ is the resonance function.
The right-hand side is a bilinear oscillatory integral operator with a phase function $s\phi(\xi,\eta)$.  By the theory of oscillatory integrals (e.g. see \cite{Stein}), the stationary sets play a crucial role:
\begin{itemize}
    \item $\RR=\{(\xi,\eta): \phi(\xi,\eta)=0\}$

    \item $\TT=\{(\xi,\eta): \nabla_{\eta} \phi(\xi,\eta)=0\}$
\end{itemize}
In general, when $\RR\cap \TT$ is very small, the equation has good nonlinear interactions.  When $\RR\cap \TT$ is very large, the equation has bad nonlinear interactions. For example, when $\omega_1(\xi)=\omega_2(\xi)=\xi$, $\RR=\TT=\R^3\times \R^3$, then we do not have any nonlinear oscillations.

Many tools have been developed to exploit the nonlinear oscillations. The first one is the Bourgain's $X^{s,b}$ method.  This method (and its relatives) is now a powerful and standard tool for low-regularity well-posedness and small data global existence.  See \cite{Tao-kz} for local-in-time analysis, and \cite{Tao-gkdv} for global-in-time analysis.  According to Tao \cite{Tao-kz}, $\RR$ is called the {\it resonance} and $\TT$ is called the {\it coherence}.  The non-resonant and non-coherent structures lead to extra smoothing effects and stronger decay.  To exploit that, some very delicate harmonic analysis tools (e.g. for transversality) enter into play, in particular when data only belongs to Sobolev space, that is $\wh f_1, \wh f_2$ has no smoothness.

Another approach to exploit the nonlinear oscillation is more elementary and straightforward. On $\R^3\times \R^3 \setminus (\RR\cap \TT)$, one can integrate by parts either in time variable or in spatial variables. This technique was explicitly used by Gustafson-Nakanishi-Tsai in \cite{GNT} (e.g. Section 10), where they proved global existence and scattering for the Gross-Pitaevskii equation in three dimensions with small data in weighted Sobolev spaces. This technique was also systematically developed by Germain-Masmoudi-Shatah \cite{GMS} and is now known as the space-time resonance method.  According to \cite{GNT} and \cite{GMS}, $\RR$ is referred as time-resonance and $\TT$ is referred as space-resonance. This method is powerful for obtaining global existence of solutions for many physical models for small and nice data.  In particular, the integration by parts in spatial variables will {inevitably} require the data in weighted Sobolev spaces (namely, require that $\wh{f_j}$ has some smoothness).
The integration by parts in time variables still works for Sobolev spaces, and is closely related to the method of normal form transform introduced by Shatah \cite{Shatah}.  In \cite{GN}, the first-named author and Nakanishi introduced the combination of the normal form transform and generalized Strichartz estimates to obtain small data scattering in Sobolev spaces for 3D quadratic dispersive systems.

In this paper, we combine the normal form transform and the space-time resonance method. We use the normal form transform to replace (some) integration-by-parts in time in the space-time resonance method.  More precisely, we write the equation \eqref{eq:qDS} as
\EQ{
(\partial_t + L_j)u_j=B_{j, R}(u_1,u_2)+B_{j,NR}(u_1,u_2)
}
where $B_{j,R}(u_1,u_2)$ denotes the resonant terms and  $B_{j,NR}(u_1,u_2)$ denotes the non-resonant terms (where $\phi(\xi,\eta)$ is large). Then we perform a normal form transform $u_j=w_j+\Omega_j(u_l,u_k)$ and get an equivalent system
\EQ{\label{eq:qDS2}
(\partial_t + L_j)w_j=& B_{j, R}(u_1,u_2)+{\mbox{Cubic terms}}(u_l,u_k, w_m)\\
u_j=&w_j+\Omega_j(u_l,u_k).
}
The advantage of doing so is:
\begin{itemize}
\item One has more flexibilities for choosing the function spaces for $w_j,u_j$.  The spaces for $u_j$ are usually weaker than that for $w_j$. For some problems, it is necessary as $w_j$ behaves (e.g. decay) better than $u_j$ due to the cancellation between $u_j$ and $\Omega_j(u_l,u_k)$

\item The map $u_j\to w_j$ is one-to-one for small data. One can rewrite the equation \eqref{eq:qDS2} further by plug-in $u_j$, so that in the first equation $u_j$ is only involved in higher order nonlinearity. In this way, one can use much weaker spaces for $u_j$ than $w_j$.  This is exactly what we use for this paper (see Section 2).

\item For \eqref{eq:qDS2}, it is convenient and easier to deal with the final data problem.  The final data problem (namely construction of wave operator) of \eqref{e1.1} is easier to handle. See \cite{GNT1} Section 2 for the case $\alpha=2$ and their methods may also work for some $\alpha<2$. We revisit the final data problem in Section \ref{sec:final data} using our approach.  Different from the initial data problem, the time interval is now $[0,\infty]$.
\end{itemize}

\section{Normal form transform and resolution spaces}

For $X,Y\geq0$, $X\les Y$ means that there exists a constant $C>0$ such that $X\leq CY$.  $X\sim Y$ means $X\les Y$ and $Y\les X$. {In particular, all constants in this paper are independent of $t,k,k_{1},k_{2}$ but may depend on $\alpha,\lambda$.}

We use $\hat{u}$ or $\F u$ to denote the standard Fourier transform
\EQN{
\F u(\xi):=(2\pi)^{-3/2}\int_{\R^3}e^{-ix\xi}u(x)dx.
}
We also use $\mathcal{F}_x u$, $\mathcal{F}_t u$ or $\mathcal{F}_{t, x} u$ to  denote the Fourier transform with specified variables.
Define
$$
e^{itD^{\alpha}}u(x):=(2\pi)^{-3/2}\int_{\R^3} e^{i(x\xi+t|\xi|^{\alpha})}\hat{u}(\xi)d\xi.
$$
Let $\varphi\in C_0^\infty(\R)$ be a real-valued, nonnegative, even, and radially decreasing function such that $\supp\  \varphi\subset [-5/4, 5/4]$ and $\varphi\equiv 1$ in $[-1, 1]$. Let $\psi(\xi):=\varphi(|\xi|)-\varphi(2|\xi|)$.
For $k\in \Z$,  define $\psi_k(\xi):=\psi(2^{-k}\xi)$, $\psi_{\leq k}(\xi):=\varphi(2^{-k}|\xi|)$ and the Littlewood-Paley projectors:
\EQN{\widehat{P_kf}(\xi):=\psi(2^{-k}|\xi|)\hat{f}(\xi), \quad \widehat{P_{\leq k}f}(\xi):=\varphi(2^{-k}|\xi|)\hat{f}(\xi).}

Define $a_{HH}(\xi,\eta):=\sum_{\substack{|k_{1}-k_{2}|<10\\
k_{1},k_{2}\in \Z}}\psi_{k_{1}}(\xi-\eta)\psi_{k_{2}}(\eta)$, $a_{HL}(\xi,\eta):=\sum_{k\in\Z}\psi_{k}(\xi-\eta)\psi_{\leq k-10}(\eta)$ and $a_{LH}(\xi,\eta):=\sum_{k\in\Z}\psi_{\leq k-10}(\xi-\eta)\psi_{k}(\eta)$.
Define
\begin{align*}
(u\bar v)_{X}(t,x):=(2\pi)^{-3/2}\int_{\R^3} \int_{\R^3} e^{ix\xi}a_{X}(\xi,\eta)\hat{u}(t,\xi-\eta)\hat{\bar{v}}(t,\eta) d\xi d\eta,
\end{align*}
where $X\in \{HH, HL, LH\}$. Then we can decompose $u\bar v$ as
\EQ{\label{e2.1}
u\bar v=(u\bar v)_{HH}+(u\bar v)_{HL}+(u\bar v)_{LH}.
}
Since $(u\bar v)_{LH}$ is non-resonant, we will use normal form transform to remove this term.  More precisely,  let
\begin{align}\label{e2.2}
B(u,v)(t,x):=(2\pi)^{-3/2}\int_{\R^3}\int_{\R^3} e^{ix\xi}\phi(\xi,\eta)^{-1}a_{LH}(\xi,\eta)\hat{u}(t,\xi-\eta)\hat{{v}}(t,\eta)d\xi d\eta,
\end{align}
where
\EQ{\label{e2.3}
\phi(\xi,\eta):=|\xi|^{\alpha}-|\xi-\eta|^{\alpha}+|\eta|^{\alpha}.
}
We define a normal form transform
\begin{align}\label{e2.4}
w=u+iB(u,\bar u).
\end{align}

\begin{lemma}\label{lem2.1}
Let $I\subseteq\R$ be an interval. Suppose that $u$ satisfies $(\partial_{t}+iD^{\alpha})u = u\bar{u}$ for all $t\in I$, then $(w,u)$ satisfies
\EQ{\label{e2.5}
(\partial_{t}+iD^\alpha)w&= (w\bar{w})_{HH+HL}+i[w \overline{B(u,\bar u)}]_{HH+HL}-i[B(u,\bar u)\bar u]_{HH+HL} \\
&\hskip 2.57cm +iB(|u|^2,\bar u)+iB(u,|u|^2), \\
u&=w-iB(u, \bar u)
}
for all $t\in I$.
\end{lemma}

\begin{proof}
{Let $t,t_{0}\in I$.}
By Duhamel's formula and \eqref{e2.1}, we have
\EQN{
e^{it|\xi|^{\alpha}}\hat{u}(t,\xi)
=e^{it_{0}|\xi|^{\alpha}}\hat{u}(t_{0},\xi)+\int_{t_{0}}^{t}e^{is|\xi|^{\alpha}}(u\bar{u})_{HH+HL+LH}^\wedge(s,\xi)ds.
}
The term $(u\bar u)_{LH}$ is non-resonant.  Indeed,
\EQN{
\int_{t_{0}}^{t}e^{is|\xi|^{\alpha}}(u\bar{u})_{LH}^\wedge(s,\xi)ds
=\int_{t_{0}}^{t}\int_{\R^3} e^{is\phi(\xi,\eta)}a_{LH}(\xi,\eta)e^{is|\xi-\eta|^{\alpha}}\hat{u}(s,\xi-\eta)e^{-is|\eta|^{\alpha}}\hat{\bar{u}}(s,\eta)dsd\eta
}
and we have $|\phi|\sim |\xi|^{\alpha}$ on $\supp(a_{LH})$. By integration by parts in $s$ and the first equation of \eqref{e1.1}, we obtain
\begin{align*}
&i\int_{t_{0}}^{t}e^{is|\xi|^{\alpha}}(u\bar{u})_{LH}^\wedge(s,\xi)ds\\
=&e^{it|\xi|^{\alpha}}\int_{\R^3} \frac{a_{LH}(\xi,\eta)}{\phi(\xi,\eta)}\hat{u}(t,\xi-\eta)\hat{\bar{u}}(t,\eta)d\eta
-e^{it_{0}|\xi|^{\alpha}}\int_{\R^3}\frac{a_{LH}(\xi,\eta)}{\phi(\xi,\eta)}\hat{u}(t_{0},\xi-\eta)\hat{\bar{u}}(t_{0},\eta)d\eta\\
&-\int_{t_{0}}^{t}\int_{\R^3} e^{is\phi(\xi,\eta)}\frac{a_{LH}(\xi,\eta)}{\phi(\xi,\eta)}e^{is|\xi-\eta|^{\alpha}}(|u|^{2})^\wedge(s,\xi-\eta)e^{-is|\eta|^{\alpha}}\hat{\bar{u}}(s,\eta)dsd\eta\\
&-\int_{t_{0}}^{t}\int_{\R^3} e^{is\phi(\xi,\eta)}\frac{a_{LH}(\xi,\eta)}{\phi(\xi,\eta)}e^{is|\xi-\eta|^{\alpha}}\hat{u}(s,\xi-\eta)e^{-is|\eta|^{\alpha}}(|u|^{2})^\wedge(s,\eta)dsd\eta\\
=&\  e^{it|\xi|^{\alpha}}B(u,\bar{u})^\wedge(t,\xi)
-e^{it_{0}|\xi|^{\alpha}}B(u,\bar{u})^\wedge(t_{0},\xi)-\int_{t_{0}}^{t}e^{is|\xi|^{\alpha}}B(|u|^{2},\bar{u})^\wedge(s,\xi)ds\\
&-\int_{t_{0}}^{t} e^{is|\xi|^{\alpha}}B(u,|u|^{2})^\wedge(s,\xi)ds,
\end{align*}
which, together with \eqref{e2.4}, implies
\EQN{
e^{it|\xi|^{\alpha}}\hat{w}(t,\xi)
&=e^{it_{0}|\xi|^{\alpha}}\hat{w}(t_{0},\xi)+i\int_{t_{0}}^{t}e^{is|\xi|^{\alpha}}B(|u|^{2},\bar u)^\wedge(s,\xi)ds\\
&\quad +i\int_{t_{0}}^{t} e^{is|\xi|^{\alpha}}B(u,|u|^{2})^\wedge(s,\xi)ds+\int_{t_{0}}^{t}e^{is|\xi|^{\alpha}}(u\bar{u})_{HH+HL}^\wedge(s,\xi)ds.
}
Therefore we get
\EQN{
(\partial_{t}w+iD^\alpha w)^\wedge(t,\xi)&=e^{-it|\xi|^{\alpha}}\partial_{t}(e^{it|\xi|^{\alpha}}\hat{w})(t,\xi)\\
&=iB(|u|^{2}, \bar u)^\wedge(t,\xi)+ iB(u,|u|^{2})^\wedge(t,\xi)+(u\bar{u})_{HH+HL}^\wedge(t,\xi).
}
Hence,  we have
\EQN{
\CAS{
(\partial_{t}+iD^\alpha)w =(u \bar u)_{HH+HL}+iB(|u|^2, \bar u)+iB(u,|u|^2),\\
u =w-iB(u, \bar u).
}
}
Using the second equation as above, we obtain
\begin{align*}
(u \bar u)_{HH+HL}=(w\bar{w})_{HH+HL}+i[w \overline{B(u,\bar u)}]_{HH+HL}-i[B(u,\bar u)\bar u]_{HH+HL}.
\end{align*}
This finishes the proof of \eqref{e2.5}. 
\end{proof}

\begin{remark}
Note that in system \eqref{e2.5}, in the first equation $u$ only appears in the cubic terms.
This enables us to have more flexibility to use different resolution spaces for $w$ and $u$.  We can plug-in the second equation again into the first equation, then we can make $u$ appear in higher order nonlinearity.  However, this is not necessary for our purposes.
\end{remark}

Inspired by \cite{W}, we define some function spaces. {For $\lambda>0$, we define} the space $\PW$ with the norm
\EQ{\label{F}
\|f\|_{\PW}:=\sup_{k\in \mathbb{Z}}2^{2k_{+}}\big(2^{-\lambda k}\|\psi_{k}(\xi)\hat{f}\|_{2}+2^{(1-\lambda)k}\|\psi_{k}(\xi)\nabla_{\xi}\hat{f}\|_{2}
+2^{(2-\lambda)k}\|\psi_{k}(\xi)\nabla_{\xi}^{2}\hat{f}\|_{2}\big)
}
where $k_+:=\max(k,0)$, and the space $\PU$ with the norm
\EQ{\label{G}
\|g\|_{\PU}:=\sup_{k\in \mathbb{Z}}2^{2k_{+}}\big(2^{-\lambda k}&\|\psi_{k}(\xi)\hat{g}\|_{2}+2^{(1-\lambda)k}\|\psi_{k}(\xi)\nabla_{\xi}\hat{g}\|_{2}\big).
}
Denote $C(\R;\Hs)^2:=C(\R;\Hs)\times C(\R;\Hs)$.  Now,  we present our main results. 
\begin{theorem}\label{thm:main}
Suppose that $\alpha\in (1,2)$ and $\lambda\in(\frac{\alpha-1}{2},\frac{1}{2})$. Assume that the initial data satisfies the following assumption
$$
\|u_{0}\|_{\Hs}+\|u_{0}\|_{\PW}\leq \varepsilon_{0},
$$
{where $\varepsilon_{0}>0$ is a sufficiently small constant that depends only on $\alpha,\lambda$.}
Then there exists a unique global solution $(w,u)$ to \eqref{e2.5} in $C(\R;\Hs)^2$ such that
\EQ{\label{e2.10}
\sup_{t>0}\left(\|w(t)\|_{\Hs}+\|e^{itD^{\alpha}}w(t)\|_{\PW}+(1+t)^{1+\delta}\norm{w(t)}_{L^\infty}\right)&\\
+\sup_{t>0}\left(\|u(t)\|_{\Hs}+\|e^{itD^{\alpha}}u(t)\|_{\PU}+(1+t)^{1+\delta}\norm{u(t)}_{L^\infty}\right)&\leq C_{\alpha,\lambda} \, \varepsilon_{0},
}
where $0<\delta<\min\{\frac{\lambda+\frac{3}{2}}{\alpha},\frac{3}{2}\}-1$. As a consequence, the solution possesses the scattering property. In particular, $u$ is a unique global solution to \eqref{e1.1} in $C(\R;\Hs)$ and scatters.
\end{theorem}

\begin{remark}
We can show $w_0=w(0)\in H^2\cap F$. See Lemma \ref{lem4.7a}. However, we couldn't close the arguments by choosing $G=F$. See Remark \ref{rem4.8}. The use of a weaker norm for $u$ seems necessary. 
\end{remark}

\begin{remark}\label{re2.4}
If $\lambda\neq \frac{3}{2}\alpha-\frac{3}{2}$, we can take $\delta=\min\{\frac{\lambda+\frac{3}{2}}{\alpha},\frac{3}{2}\}-1$ in Theorem \ref{thm:main}.
\end{remark}

\begin{remark}\label{re2.5}
If $\alpha=2$, we require $\lambda>1/2$ to get decay rate $(1+t)^{-1-}$ but we require $\lambda\leq 1/2$ to balance the $high\times high \to 0$ frequency interaction. This is exactly where
the argument for Theorem \ref{thm:main} breaks down. 
\end{remark}

\smallskip

\section{Linear and bilinear estimates}\label{sec2}
\setcounter{equation}{0}

For the sake of notational simplicity,  for $f\in \Sw'$, we denote $f_{k}=P_kf$ and $f_{\leq k}=P_{\leq k}f$. In this section, we collect some linear and bilinear estimates. 

\begin{lemma}\label{lem3.1}
Let $\alpha>1$. For all $t>0$, $k\in \mathbb{Z}$, we have
\begin{align*}
\|e^{-itD^{\alpha}}f\|_{2}&=\|f\|_{2},\\
\|e^{-itD^{\alpha}}f_{k}\|_{\infty}&\les t^{-\frac{3}{2}}2^{3(1-\frac{\alpha}{2})k}\|f\|_{1}.
\end{align*}
In particular, by interpolation we have
\begin{align*}
\|e^{-itD^{\alpha}}f_{k}\|_{4}\les t^{-\frac{3}{4}}2^{\frac{3}{2}(1-\frac{\alpha}{2})k}\|f\|_{\frac{4}{3}}.
\end{align*}
\end{lemma}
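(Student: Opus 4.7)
The $L^2$ identity is immediate: on the Fourier side $e^{-itD^\alpha}$ is multiplication by the unimodular symbol $e^{-it|\xi|^\alpha}$, so Plancherel's theorem gives $\|e^{-itD^\alpha}f\|_2=\|\widehat{f}\|_2=\|f\|_2$. The $L^{4/3}\to L^4$ bound follows from the $L^1\to L^\infty$ and $L^2\to L^2$ bounds by Riesz--Thorin interpolation (the Littlewood--Paley projector $P_k$ is bounded on $L^2$ uniformly in $k$). So the content of the lemma is the dispersive bound.

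For the dispersive estimate I would write $e^{-itD^\alpha}f_k = K_{t,k} * f$ (up to a harmless constant) with kernel
\EQN{
K_{t,k}(x) = \int_{\R^3} e^{i(x\cdot \xi - t|\xi|^\alpha)}\, \psi_k(\xi)\, d\xi
}
and prove $\|K_{t,k}\|_\infty \lesssim t^{-3/2}\, 2^{3(1-\alpha/2)k}$; the stated bound then follows by Young's inequality. Rescaling $\xi = 2^k\eta$ puts $K_{t,k}$ in the form
\EQN{
K_{t,k}(x) = 2^{3k}\int_{\R^3} e^{i\,\mu\,\Phi(\eta)}\psi(\eta)\,d\eta,
\qquad \mu := t\, 2^{\alpha k},\qquad \Phi(\eta):=\tfrac{2^k x}{\mu}\cdot\eta - |\eta|^\alpha,
}
an oscillatory integral with amplitude supported in $|\eta|\sim 1$ and large parameter $\mu$. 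A direct computation of the Hessian of $|\eta|^\alpha$ on $|\eta|\sim 1$ gives eigenvalues $\alpha(\alpha-1)$ in the radial direction and $\alpha$ in the two tangential directions; since $\alpha\in(1,2)$, the Hessian of $\Phi$ is therefore non-degenerate with determinant bounded below uniformly in $\eta$. The standard stationary phase/van der Corput bound in $\R^3$ (see Stein, \emph{Harmonic Analysis}) yields
\EQN{
\Big|\int_{\R^3} e^{i\mu\Phi(\eta)}\psi(\eta)\,d\eta\Big|\lesssim \mu^{-3/2}
}
uniformly in $x$, giving $\|K_{t,k}\|_\infty \lesssim 2^{3k}\mu^{-3/2} = t^{-3/2}\, 2^{3(1-\alpha/2)k}$. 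In the complementary regime $\mu \lesssim 1$, the trivial estimate $\|K_{t,k}\|_\infty \lesssim 2^{3k}$ from $|\psi|\lesssim 1$ already dominates $t^{-3/2}2^{3(1-\alpha/2)k}$, so the stated bound holds in all regimes.

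I don't anticipate a real obstacle. The only thing to be careful about is checking non-degeneracy of the Hessian of $|\eta|^\alpha$ for $\alpha\in(1,2)$, which would fail at the endpoints $\alpha=1$ (zero radial eigenvalue, corresponding to the wave equation where only $t^{-1}$ decay can be expected) or $\alpha=0$. This is exactly why the hypothesis $\alpha\in(1,2)$ enters the decay rate.
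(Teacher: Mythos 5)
Your proof is correct and matches the paper's route: Plancherel for the $L^2$ identity, interpolation for the $L^{4/3}\to L^4$ bound, and for the dispersive estimate the paper simply cites \cite{GN}, while you supply the standard scaling-plus-stationary-phase argument (with the correct Hessian eigenvalue computation $\alpha(\alpha-1)$ radial and $\alpha$ tangential for $|\eta|\sim 1$, non-degenerate precisely because $\alpha\in(1,2)$) that is the content of the cited result.
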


\begin{proof}
The first estimate follows from Plancherel's theorem.
The second estimate follows from \cite{GN}.
\end{proof}

\begin{lemma}\label{lemint}
We have
\begin{align*}
\norm{f}_{\frac{4}{3}}\les \|f\|_{2}^{\frac{1}{4}}\cdot \big\||x|f\big\|_{2}^{\frac{3}{4}}  \quad {\rm and} \quad \norm{f}_{1}\les\|f\|_{2}^{\frac{1}{4}}\cdot \big\||x|^{2}f\big\|_{2}^{\frac{3}{4}}.
\end{align*}
\end{lemma}
\begin{proof}
For the first one, we have
\EQN{
\norm{f}_{\frac{4}{3}}\leq& \norm{1_{|x|\leq R}f}_{\frac{4}{3}}+\norm{1_{|x|\geq R}f}_{\frac{4}{3}}\\
\les&  \norm{1_{|x|\leq R}}_{4}\norm{f}_2+\norm{|x|^{-1} 1_{|x|\geq R}}_{4}\norm{|x|f}_{2}\\
\les&\  R^{\frac{3}{4}}\norm{f}_2+R^{-\frac{1}{4}}\norm{|x|f}_{2}.
}
Optimising in $R$, we complete the proof. The proof of the second inequality follows a similar approach.
\end{proof}

\begin{lemma}\label{lem3.2}
{Let $\alpha>1$, $\lambda>0$ and $\PW$ be given by \eqref{F}.} For all $t>0$ and $k\in \mathbb{Z}$, we have
\EQN{
\|e^{-itD^{\alpha}}f_{k}\|_{\infty}\les& \min\{2^{(\lambda+\frac{3}{2})k-2k_{+}},t^{-\frac{3}{2}}2^{(\lambda-\frac{3}{2}\alpha+\frac{3}{2})k-2k_{+}}\}\|f\|_{\PW},\\
\|e^{-itD^{\alpha}}f_{k}\|_{4}\les& \min\{2^{(\lambda+\frac{3}{4})k-2k_{+}},t^{-\frac{3}{4}}2^{(\lambda-\frac{3}{4}\alpha+\frac{3}{4})k-2k_{+}}\}\|f\|_{\PW},\\
\|e^{-itD^{\alpha}}(\nabla\hat{f}_{k})^\vee\|_{4}\les& \min\{2^{(\lambda-\frac{1}{4})k-2k_{+}},t^{-\frac{3}{4}}2^{(\lambda-\frac{3}{4}\alpha-\frac{1}{4})k-2k_{+}}\}\|f\|_{\PW},\\
\|e^{-itD^{\alpha}}f_{k}\|_{6}\les& \min\{2^{(\lambda+1)k-2k_{+}},t^{-1}2^{(\lambda-\alpha+1)k-2k_{+}}\}\|f\|_{\PW}.
}
\end{lemma}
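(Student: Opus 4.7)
The plan is to prove each bound as the minimum of a ``Bernstein'' estimate (time-independent, from $L^2$-conservation of $e^{-itD^\alpha}$ combined with frequency localization) and a ``dispersive'' estimate (from Lemma \ref{lem3.1} combined with Lemma \ref{lemint}). On the Bernstein side, since $e^{-itD^\alpha}$ is unitary on $L^2$ and $f_k$ is localized to $\{|\xi|\sim 2^k\}$, Bernstein gives $\|e^{-itD^\alpha}f_k\|_p\les 2^{3k(1/2-1/p)}\|\psi_k\hat f\|_2\les 2^{3k(1/2-1/p)}\cdot 2^{\lambda k-2k_+}\|f\|_W$, producing the uniform-in-$t$ exponents $\lambda+3/2,\,\lambda+3/4,\,\lambda+1$ for $p=\infty,4,6$. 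The same Bernstein applied to $(\nabla_\xi\hat f_k)^\vee$, together with the $W$-norm bound $\|\nabla_\xi\hat f_k\|_2\les 2^{(\lambda-1)k-2k_+}\|f\|_W$, gives the $\lambda-1/4$ exponent for the third estimate.

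For the dispersive side with $p=\infty,4$, I would apply Lemma \ref{lem3.1} to reduce to $\|f_k\|_1$ and $\|f_k\|_{4/3}$ and then Lemma \ref{lemint} to bound
\[
\|f_k\|_1\les \|\hat f_k\|_2^{1/4}\|\nabla_\xi^2\hat f_k\|_2^{3/4},\qquad \|f_k\|_{4/3}\les \|\hat f_k\|_2^{1/4}\|\nabla_\xi\hat f_k\|_2^{3/4},
\]
using the Plancherel identification $\||x|^j f_k\|_2\sim\|\nabla_\xi^j\hat f_k\|_2$. The $W$-norm controls the two factors, yielding $2^{(\lambda-3/2)k-2k_+}\|f\|_W$ and $2^{(\lambda-3/4)k-2k_+}\|f\|_W$; multiplying by the Lemma \ref{lem3.1} prefactors $t^{-3/2}2^{3(1-\alpha/2)k}$ and $t^{-3/4}2^{\frac{3}{2}(1-\alpha/2)k}$ produces the required exponents $\lambda-\frac{3}{2}\alpha+\frac{3}{2}$ and $\lambda-\frac{3}{4}\alpha+\frac{3}{4}$. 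The third estimate is handled analogously with $(\nabla_\xi\hat f_k)^\vee$ in place of $f_k$, using $\|(\nabla_\xi\hat f_k)^\vee\|_2=\|\nabla_\xi\hat f_k\|_2$ and $\||x|(\nabla_\xi\hat f_k)^\vee\|_2=\|\nabla_\xi^2\hat f_k\|_2$, both controlled by $\|f\|_W$, which after interpolation via Lemma \ref{lemint} gives $2^{(\lambda-7/4)k-2k_+}\|f\|_W$ and the target exponent $\lambda-\frac{3}{4}\alpha-\frac{1}{4}$. For the $L^6$ dispersive bound I would simply interpolate the $L^\infty$ and $L^2$ estimates already in hand with weight $\theta=2/3$: the time factor becomes $(t^{-3/2})^{2/3}=t^{-1}$ and the frequency factor becomes $(2^{(\lambda-\frac{3\alpha}{2}+\frac{3}{2})k})^{2/3}(2^{\lambda k})^{1/3}=2^{(\lambda-\alpha+1)k}$, with the $2^{-2k_+}$ weight preserved.

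The only real bookkeeping---not a genuine obstacle---concerns the derivatives that fall on the Littlewood--Paley cutoff when computing $\nabla_\xi^j(\psi_k\hat f)$, which generate commutator terms such as $(\nabla_\xi\psi_k)\hat f$ carrying a compensating factor $2^{-k}$ and supported on a slightly enlarged dyadic shell. These terms are at most the same size as the main terms and are absorbed by the $W$-norm contribution at a neighbouring dyadic scale; combined with careful tracking of the $2^{-2k_+}$ weight through all interpolation exponents, this completes the proof without any further ideas.
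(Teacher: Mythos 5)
Your proof is correct and takes essentially the same route as the paper: the time-uniform exponents come from $L^2$-unitarity plus Bernstein, the time-decaying exponents from the dispersive estimate of Lemma~\ref{lem3.1} combined with the weighted interpolation inequalities of Lemma~\ref{lemint}, and the $L^6$ case by interpolating the $L^\infty$ and $L^2$ bounds. The only cosmetic difference is that you obtain the $L^4$ dispersive bound for $f_k$ directly from the $L^4$--$L^{4/3}$ estimate in Lemma~\ref{lem3.1}, whereas the paper interpolates its $L^\infty$ and $L^2$ bounds; the resulting exponents are identical.
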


\begin{proof}
By Lemmas \ref{lem3.1} and \ref{lemint}, we have
\begin{align*}
\|e^{-itD^{\alpha}}f_{k}\|_{\infty}&\les t^{-\frac{3}{2}}2^{3(1-\frac{\alpha}{2})k}\|f_{k}\|_{1}\les t^{-\frac{3}{2}}2^{3(1-\frac{\alpha}{2})k}\|f_{k}\|_{2}^{\frac{1}{4}}\big\||x|^{2}f_{k}\big\|_{2}^{\frac{3}{4}}\\
&\les t^{-\frac{3}{2}}2^{(\lambda-\frac{3}{2}\alpha+\frac{3}{2})k-2k_{+}}\|f\|_{\PW}.
\end{align*}
We apply  Bernstein's inequality to obtain
\begin{align*}
\|e^{-itD^{\alpha}}f_{k}\|_{\infty}&\les 2^{\frac{3}{2}k}\|f_{k}\|_{2}\les2^{(\lambda+\frac{3}{2})k-2k_{+}}\|f\|_{\PW}.
\end{align*}
Using interpolation, we  then get 
\EQN{
\|e^{-itD^{\alpha}}f_{k}\|_{4}\leq\|f_{k}\|_{2}^{\frac{1}{2}}\|e^{-itD^{\alpha}}f_{k}\|_{\infty}^{\frac{1}{2}}
\les \min\{2^{(\lambda+\frac{3}{4})k-2k_{+}},t^{-\frac{3}{4}}2^{(\lambda-\frac{3}{4}\alpha+\frac{3}{4})k-2k_{+}}\}\|f\|_{\PW}.
}
Similarly,  we may use Lemma \ref{lem3.1} to obtain
\EQN{
\|e^{-itD^{\alpha}}(\nabla\hat{f}_{k})^\vee\|_{4}&\les t^{-\frac{3}{4}}2^{\frac{3}{2}(1-\frac{\alpha}{2})k}\big\||x|f_{k}\big\|_{\frac{4}{3}}
\les t^{-\frac{3}{4}}2^{\frac{3}{2}(1-\frac{\alpha}{2})k}\big\||x|f_{k}\big\|_{2}^{\frac{1}{4}}\big\||x|^{2}f_{k}\big\|_{2}^{\frac{3}{4}}\\
&\les t^{-\frac{3}{4}}2^{(\lambda-\frac{3}{4}\alpha-\frac{1}{4})k-2k_{+}}\|f\|_{\PW}.
}
By Bernstein's inequality, we have
\begin{align*}
\|e^{-itD^{\alpha}}(\nabla\hat{f}_{k})^\vee\|_{4}\les 2^{\frac{3}{4}k}\|\nabla\hat{f}_{k}\|_{2}\les2^{(\lambda-\frac{1}{4})k-2k_{+}}\|f\|_{\PW}.
\end{align*}
By interpolation, it yields
\begin{align*}
\|e^{-itD^{\alpha}}f_{k}\|_{6} \leq& \|f_{k}\|_{2}^{\frac{1}{3}}\|e^{-it|D|^{\alpha}}f_{k}\|_{\infty}^{\frac{2}{3}}\les \min\{2^{(\lambda+1)k-2k_{+}},t^{-1}2^{(\lambda-\alpha+1)k-2k_{+}}\}\|f\|_{\PW}.
\end{align*}
\end{proof}

Let $m\in L^\infty(\R^3\times \R^3)$.  For $j,k\in \Z$,  define
\EQN{
T_{j,k}(f,g)(x):=\int_{\R^3}\int_{\R^3}e^{ix\xi}\psi_{k}(\xi)m(\xi,\eta)\psi_{j}(\eta)\hat{f}(\xi-\eta)\hat{g}(\eta)d\xi d\eta.
}

\begin{lemma}\label{lem3.3}
{Let $m\in C^{\infty}(\R^3\times \R^3)$. For all $\beta_{1},\beta_{2}\in \mathbb{Z}^{3}_{+}$ with $|\beta_{1}|+|\beta_{2}|\leq 30$, suppose that there exists a constant $C_{\beta_{1},\beta_{2}}$ satisfying
\begin{align*}
\sup_{\xi,\eta\in \R^{3}}|\xi|^{|\beta_{1}|}|\eta|^{|\beta_{2}|}|\partial_{\xi}^{\beta_{1}}\partial_{\eta}^{\beta_{2}}m(\xi,\eta)|\leq C_{\beta_{1},\beta_{2}}<\infty.
\end{align*}
Then for all $1\leq p_{1},p_{2},p\leq \infty$,} $f\in L^{p_{1}}(\R^{3})$ and $g\in L^{p_{2}}(\R^{3})$, the following bilinear estimate holds,
\begin{align*}
\sup_{j,k\in \mathbb{Z}}\|T_{j,k}(f,g)\|_{p}\les\|f\|_{p_{1}}\|g\|_{p_{2}},
\end{align*}
where $\frac{1}{p}=\frac{1}{p_{1}}+\frac{1}{p_{2}}$. 
\end{lemma}

\begin{proof}
Choose a smooth function $\tilde{\psi}\in C^{\infty}_{0}(\R^{3}\setminus\{0\})$, which equals to 1 on $\supp\,\psi$.   By the Fourier series expansion, we have
\begin{align*}
\psi_{k}(\xi)m(\xi,\eta)\psi_{j}(\eta)=\sum_{\gamma\in \mathbb{Z}^{3}_{+}}m_{k,\gamma}(\xi)\tilde{\psi}_{j,\gamma}(\eta),
\end{align*}
where $m_{k,\gamma}(\xi):=\psi_{k}(\xi)\int_{\R^{3}} e^{i\gamma\zeta}m(\xi,2^{j}\zeta)\psi(\zeta)d\zeta$ and $\tilde{\psi}_{j,\gamma}(\eta):=e^{i\gamma 2^{-j}\eta}\tilde{\psi}_{j}(\eta)$. By integration by parts, for all $|\nu|\leq10$, we have
\begin{align*}
|\partial_{\xi}^{\nu}m_{k,\gamma}(\xi)|\leq C_{\nu}(1+|\gamma|)^{-20}2^{-|\nu|k}.
\end{align*}
Define
\begin{align*}
T_{j,k}^{\gamma}(f,g)(x):=\int_{\R^3}\int_{\R^3}e^{ix\xi}m_{k,\gamma}(\xi)\tilde{\psi}_{j,\gamma}(\eta)\hat{f}(\xi-\eta)\hat{g}(\eta)d\xi d\eta=m_{k,\gamma}(D)(f\tilde{\psi}_{j,\gamma}(D)g)(x).
\end{align*}
For all $j,k\in \mathbb{Z}$, we have
\begin{align*}
\|T_{j,k}(f,g)\|_{p}\leq& \sum_{\gamma\in \mathbb{Z}^{3}_{+}}\|T_{j,k}^{\gamma}(f,g)\|_{p}
=\sum_{\gamma\in \mathbb{Z}^{3}_{+}}\|m_{k,\gamma}(D)(f\tilde{\psi}_{j,\gamma}(D)g)\|_{p}\\
\les& \sum_{\gamma\in \mathbb{Z}^{3}_{+}}(1+|\gamma|)^{-20}\|f\tilde{\psi}_{j,\gamma}(D)g\|_{p}
\les \sum_{\gamma\in \mathbb{Z}^{3}_{+}}(1+|\gamma|)^{-20}\|f\|_{p_{1}}\|\tilde{\psi}_{j,\gamma}(D)g\|_{p_{2}}\\
\les& \sum_{\gamma\in \mathbb{Z}^{3}_{+}}(1+|\gamma|)^{-10}\|f\|_{p_{1}}\|g\|_{p_{2}}\les \|f\|_{p_{1}}\|g\|_{p_{2}}.
\end{align*}
\end{proof}

\begin{lemma}\label{lem3.4}
Let $m$ be as in Lemma \ref{lem3.3}. For $f\in L^{2}(\R^{3})$ and $g\in L^{2}(\R^{3})$, the following bilinear estimate holds,
\begin{align*}
\Big\|\mathcal{F}^{-1}\big(\psi_{k}(\cdot)\int_{\R^3} m(\cdot,\eta)\hat{f}_{k_{1}}(\cdot-\eta)\hat{g}_{k_{2}}(\eta)d\eta\big)\Big\|_{2}\les\min\{2^{\frac{3}{2}k},2^{\frac{3}{2}k_{1}},2^{\frac{3}{2}k_{2}}\}\|f_{k_{1}}\|_{2}\|g_{k_{2}}\|_{2}.
\end{align*}
\end{lemma}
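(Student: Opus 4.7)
The quantity on the left-hand side is exactly the $L^2$-norm of $T_{k_2,k}(f_{k_1},g_{k_2})$, where $T_{j,k}$ denotes the Coifman--Meyer type bilinear operator defined before Lemma \ref{lem3.3} (with the $m$ in that lemma being our $m$). Indeed, the outer factor $\psi_k(\xi)$ provides the $\psi_k(\xi)$ in the definition of $T_{j,k}$, the factor $\psi_{k_2}(\eta)$ (coming from $\hat g_{k_2}(\eta)$) plays the role of $\psi_j(\eta)$, and the remaining input $\hat f_{k_1}(\xi-\eta)$ is simply one of the two input functions. Thus Lemma \ref{lem3.3} is directly applicable.

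To obtain the three bounds in the minimum, the plan is to run Lemma \ref{lem3.3} with three different choices of exponents and combine with Bernstein's inequality. For the factor $2^{\frac32 k}$, I take $(p,q,r)=(2,2,1)$ in Lemma \ref{lem3.3} to get $\|T_{k_2,k}(f_{k_1},g_{k_2})\|_{1}\lesssim \|f_{k_1}\|_{2}\|g_{k_2}\|_{2}$, and then use Bernstein on the output, which is supported in $|\xi|\sim 2^k$ by the $\psi_k(\xi)$ factor, to pass from $L^1$ to $L^2$ at a cost of $2^{3k/2}$. For the factor $2^{\frac32 k_1}$, I take $(p,q,r)=(\infty,2,2)$, getting $\|T_{k_2,k}(f_{k_1},g_{k_2})\|_{2}\lesssim \|f_{k_1}\|_{\infty}\|g_{k_2}\|_{2}$, and apply Bernstein to $f_{k_1}$ to bound $\|f_{k_1}\|_\infty \lesssim 2^{3k_1/2}\|f_{k_1}\|_2$. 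The factor $2^{\frac32 k_2}$ is entirely symmetric: use $(p,q,r)=(2,\infty,2)$ and Bernstein on $g_{k_2}$.

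Taking the minimum of the three estimates yields the conclusion. There is really no substantive obstacle in this argument: the only thing to check is that Lemma \ref{lem3.3} applies to the symbol $\psi_k(\xi)m(\xi,\eta)\psi_{k_2}(\eta)$, which follows from the Mikhlin-type hypothesis on $m$ assumed there. One minor bookkeeping point is that, strictly speaking, when $k_1,k_2,k$ are all comparable (the $HH\to L$ or balanced scenarios) the three bounds are of the same order, while in the remaining cases one of the three truly dominates; this is automatically handled by taking the minimum, so no case distinction is actually needed in the proof.
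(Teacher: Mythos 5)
Your argument is correct and is precisely the one the paper intends: the paper's proof consists of the single line ``It is a consequence of Lemma~\ref{lem3.3} and Bernstein's inequality,'' and your three applications of Lemma~\ref{lem3.3} with exponents $(2,2,1)$, $(\infty,2,2)$, $(2,\infty,2)$ combined with Bernstein on the output, on $f_{k_1}$, and on $g_{k_2}$ respectively are exactly the details being left implicit.
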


\begin{proof}
It is a consequence of Lemma \ref{lem3.3} and Bernstein's inequality.
\end{proof}

Let $m\in L^\infty(\R^3\times \R^3\times\R^{3})$. For $j,k,l\in\Z$, define
\EQN{
T_{j,k,l}(f,g,h)(x):=\int_{\R^3}\int_{\R^3}\int_{\R^3} e^{ix\xi}\psi_{k}(\xi)m(\xi,\eta,\zeta)\psi_{j}(\eta)\psi_{l}(\zeta)\hat{f}(\xi-\eta-\zeta)\hat{g}(\eta)\hat{h}(\zeta)d\xi d\eta d\zeta.
}

Again,  by the Fourier series expansion as in Lemma \ref{lem3.3}, we have
\begin{lemma}\label{lem3.5}
{Let $m\in C^{\infty}(\R^3\times \R^3\times \R^3)$. For all $\beta_{1},\beta_{2},\beta_{3}\in \mathbb{Z}^{3}_{+}$ with $|\beta_{1}|+|\beta_{2}|+|\beta_{3}|\leq 60$, suppose that there exists a constant $C_{\beta_{1},\beta_{2},\beta_{3}}$ satisfying
\begin{align*}
\sup_{\xi,\eta,\zeta\in\R^{3}}|\xi|^{|\beta_{1}|}|\eta|^{|\beta_{2}|}|\zeta|^{|\beta_{3}|}|\partial_{\xi}^{\beta_{1}}\partial_{\eta}^{\beta_{2}}\partial_{\zeta}^{\beta_{3}}m(\xi,\eta,\zeta)|\leq C_{\beta_{1},\beta_{2},\beta_{3}}<\infty.
\end{align*}
Then for all $1\leq p_{1},p_{2},p_{3},p\leq \infty$,} $f\in L^{p_{1}}(\R^{3})$ , $g\in L^{p_{2}}(\R^{3})$ and $h\in L^{p_{3}}(\R^{3})$, the following trilinear estimate holds,
\begin{align*}
\sup_{j,k,l\in \mathbb{Z}}\|T_{j,k,l}(f,g,h)\|_{p}\les\|f\|_{p_{1}}\|g\|_{p_{2}}\|h\|_{p_{3}},
\end{align*}
where $\frac{1}{p}=\frac{1}{p_{1}}+\frac{1}{p_{2}}+\frac{1}{p_{3}}$. 
\end{lemma}

{
\begin{definition}\label{def3.7}
Let $N\geq1$, $\tau>0$ and $\Omega\subseteq (\R^{3})^{N}$ be an open set. We say a  function $a\in \tau S^{0}(\Omega)$ if $a\in C^{\infty}(\Omega)$ and for all $\beta\in (\mathbb{Z}^{3}_{+})^{N}$ , there exists a constant $C_{\beta}$ such that
\begin{align*}
\sup_{v\in \Omega}\bigg|\big(\prod_{i=1}^{N}|v_{i}|^{|\beta_{i}|}\partial_{v_{i}}^{\beta_{i}}\big)a(v)\bigg|\leq C_{\beta}\tau,
\end{align*}
where $v:=(v_{1},...,v_{N})$,  $\beta:=(\beta_{1},...,\beta_{N})$ and $v_{i}\in\R^{3}$, $\beta_{i}\in\Z^{3}_{+}$ for all $1\leq i\leq N$.
\end{definition}}

Recall that $\phi$ is given by \eqref{e2.3}. We define
$$
\Omega_{k,k_{1},k_{2}}:=\{(\xi,\eta)\in\R^{6}:  |\xi|\in (2^{k-1},2^{k+1}), |\xi-\eta|\in (2^{k_{1}-1},2^{k_{1}+1}), |\eta|\in (2^{k_{2}-1},2^{k_{2}+1})\}.
$$
\begin{lemma}\label{lem3.8}
Assume $\alpha\in(1,2)$. Let $|k_{1}-k_{2}|< 10$ and $k_{1}\geq k-12$. Then we have
$$
\partial_{\xi_{i}}\phi\in 2^{(\alpha-1)k_{1}}S^{0}(\Omega_{k,k_{1},k_{2}}),\, \partial_{\eta_{i}}\phi\in 2^{k}2^{(\alpha-2)k_{1}}S^{0}(\Omega_{k,k_{1},k_{2}})
$$
{for all $1\leq i\leq3$. Moreover, there exist constants $c_{1},c_{2}>0$ such that
\EQN{
\inf_{\Omega_{k,k_{1},k_{2}}}|\nabla_{\xi}\phi|\geq c_{1}2^{(\alpha-1)k_{1}},\, \inf_{\Omega_{k,k_{1},k_{2}}}|\nabla_{\eta}\phi|\geq c_{2}2^{k}2^{(\alpha-2)k_{1}}.
}
}
\end{lemma}

\begin{proof}
Suppose $(\xi,\eta)\in \Omega_{k,k_{1},k_{2}}$. By our assumption, we have $|\xi|\les|\xi-\eta|\sim|\eta|$. Let $\varepsilon>0$ be  a sufficiently small constant,  which will be determined later.

{\bf Case 1:}  $|\xi|\geq \varepsilon|\eta|$, which means $|\xi-\eta|\sim|\eta|\sim|\xi|$. {Let $\angle(\xi,\xi-\eta)$ denote the angle between the vectors $\xi$ and $\xi-\eta$. If $\angle(\xi,\xi-\eta)\leq \varepsilon'$ and $\varepsilon'=\varepsilon'(\varepsilon)$ is small enough, we have $\big||\xi|^{\alpha-1}-|\xi-\eta|^{\alpha-1}\big|\geq C_{\varepsilon}|\eta|^{\alpha-1}$ because $|\xi-\eta|\sim|\eta|$.} Then by the triangle inequality, we have
\EQN{
|\nabla_{\xi}\phi(\xi,\eta)|=\bigg|\alpha|\xi|^{\alpha-1}\frac{\xi}{|\xi|}-\alpha|\xi-\eta|^{\alpha-1}\frac{\xi-\eta}{|\xi-\eta|}\bigg|\geq C_{\varepsilon}2^{(\alpha-1)k_{1}}.
}
If $\angle(\xi,\xi-\eta)\geq \varepsilon'$,  then we have
\EQN{
|\nabla_{\xi}\phi(\xi,\eta)|
=\bigg|\alpha|\xi|^{\alpha-1}\frac{\xi}{|\xi|}-\alpha|\xi-\eta|^{\alpha-1}\frac{\xi-\eta}{|\xi-\eta|}\bigg|
\geq C_{\varepsilon}|\xi|^{\alpha-1}\geq C_{\varepsilon}2^{(\alpha-1)k_{1}}.
}

Similarly, we have
\EQN{
|\nabla_{\eta}\phi(\xi,\eta)|=\bigg|\alpha|\eta|^{\alpha-1}\frac{\eta}{|\eta|}-\alpha|\xi-\eta|^{\alpha-1}\frac{\eta-\xi}{|\eta-\xi|}\bigg|\geq C_{\varepsilon}2^{(\alpha-1)k_{1}}.
}
The proofs of the other estimates are standard, so  we omit them.

{\bf Case 2:}  $|\xi|\leq \varepsilon|\eta|$.
By the triangle inequality, we have
\begin{align*}
|\nabla_{\xi}\phi(\xi,\eta)|\sim|\xi-\eta|^{\alpha-1}\sim2^{(\alpha-1)k_{1}}.
\end{align*}
Define $\omega(\eta):=|\eta|^{\alpha}$. Then
\begin{align*}
\partial_{\eta_{i},\eta_{j}}^{2}\omega(\eta)&=\partial_{\eta_{j}}(\alpha|\eta|^{\alpha-2}\eta_{i})=\alpha|\eta|^{\alpha-2}\big((\alpha-2)|\eta|^{-2}\eta_{i}\eta_{j}+\delta_{ij}\big),
\end{align*}
which implies that
\begin{align*}
\partial_{\eta}^{2}\omega(\eta)\xi=\alpha|\eta|^{\alpha-2}\big((\alpha-2)
\lb \xi,\eta/|\eta|\rb\eta/|\eta|+\xi\big).
\end{align*}
Let $\xi=\lb \xi,\eta/|\eta|\rb\eta/|\eta|+\xi'$ be the orthogonal decomposition. Then we have
\begin{align}\label{e3.2}
|(\alpha-2)
\lb \xi,\eta/|\eta|\rb\eta/|\eta|+\xi|=|(\alpha-1)
\lb \xi,\eta/|\eta|\rb\eta/|\eta|+\xi'|\sim|\xi|,
\end{align}
where we use $\alpha>1$.
By the mean value theorem, we have
\begin{align*}
\nabla_{\eta}\phi(\xi,\eta)&=\nabla_{\eta}\omega(\eta)-\nabla_{\eta}\omega(\eta-\xi)
=\int_{0}^{1}\partial_{\eta}^{2}\omega(\eta-\theta\xi)\xi d\theta.
\end{align*}
Because $|\xi|\leq \varepsilon|\eta|$, we have $|\eta-\theta\xi|\sim|\eta|$ if $\varepsilon$ is small enough and
$$
\frac{\eta-\theta\xi}{|\eta-\theta\xi|}=\frac{\eta}{|\eta|}+O(|\xi|/|\eta|)=\frac{\eta}{|\eta|}+O(\varepsilon),
$$
which tells us that
\begin{align*}
\partial_{\eta}^{2}\omega(\eta-\theta\xi)\xi&=\alpha|\eta-\theta\xi|^{\alpha-2}\big((\alpha-2)
\lb \xi,\eta-\theta\xi/|\eta-\theta\xi|\rb(\eta-\theta\xi)/|\eta-\theta\xi|+\xi\big)\\
&=\alpha|\eta-\theta\xi|^{\alpha-2}\big((\alpha-2)
\lb \xi,\eta/|\eta|\rb\eta/|\eta|+\xi\big)+O(\varepsilon|\eta|^{\alpha-2}|\xi|).
\end{align*}
Then  we have
\begin{align*}
\nabla_{\eta}\phi(\xi,\eta)=\alpha\int_{0}^{1}|\eta-\theta\xi|^{\alpha-2}d\theta\big((\alpha-1)
\lb \xi,\eta/|\eta|\rb\eta/|\eta|+\xi'\big)+O(\varepsilon|\eta|^{\alpha-2}|\xi|),
\end{align*}
which, together with \eqref{e3.2}, yields
\begin{align*}
|\nabla_{\eta}\phi(\xi,\eta)|
\sim & |\eta|^{\alpha-2}|\xi|+O(\varepsilon|\eta|^{\alpha-2}|\xi|)\sim|\eta|^{\alpha-2}|\xi|
\end{align*}
if $\varepsilon>0$ is small enough.
The other estimates can be proven by standard calculus.
\end{proof}

\begin{lemma}\label{lem3.9}
Assume $\alpha\in(1,2)$.
Let $k_{1}\geq k_{2}+10$ and $|k_{1}-k|\leq2$.
Then we have
$$
\partial_{\xi_{i}}\phi\in2^{(\alpha-2)k}2^{k_{2}}S^{0}(\Omega_{k,k_{1},k_{2}}),\,\partial_{\eta_{i}}\phi\in 2^{(\alpha-1)k}S^{0}(\Omega_{k,k_{1},k_{2}})
$$
{for all $1\leq i\leq  3$. Moreover, there exist constants $c_{1},c_{2}>0$ such that
\EQN{
\inf_{\Omega_{k,k_{1},k_{2}}}|\nabla_{\xi}\phi|\geq c_{1}2^{(\alpha-2)k}2^{k_{2}},\, \inf_{\Omega_{k,k_{1},k_{2}}}|\nabla_{\eta}\phi|\geq c_{2}2^{(\alpha-1)k}.
}}
\end{lemma}

\begin{proof}
Suppose $(\xi,\eta)\in \Omega_{k,k_{1},k_{2}}$. By the assumptions on $(k,k_{1},k_{2})$, we have $|\eta|\leq C|\xi|$.
Let $\varepsilon>0$ be small enough. If $|\eta|\leq \varepsilon|\xi|$, the above estimates can be proven in a similar manner to Case 2 of Lemma \ref{lem3.8}.
If $|\eta|\in [\varepsilon|\xi|,C|\xi|]$, then $|\xi|\sim|\eta|\sim|\xi-\eta|$ by the assumptions on $(k,k_{1},k_{2})$ and the above estimates can be proven in a similar way to Case 1 of Lemma \ref{lem3.8}.
\end{proof}

For $k\in\Z$, define
\EQ{\label{chi}
&\chi_{k}^{1}:=\{(k_{1},k_{2})\in \mathbb{Z}^{2}: |k_{1}-k_{2}|< 10,k\leq k_{1}+12\},\\
&\chi_{k}^{2}:=\{(k_{1},k_{2})\in \mathbb{Z}^{2}: k_{1}-k_{2}\geq10,|k_{1}-k|\leq2\},\\
&\chi_{k}^{3}:=\{(k_{1},k_{2})\in \mathbb{Z}^{2}: k_{2}-k_{1}\geq 10,|k_{2}-k|\leq2\},
}
and $\chi_{k}:=\cup_{\ell=1}^{3}\chi_{k}^{\ell}$.

\begin{lemma}\label{lem3.12aa}
Assume $\alpha\in(1,2)$.
Let $(k_{1},k_{2})\in \chi_{k}^{\ell}$ and $\ell\in\{1,2\}$. There exist $\{\rho_{j}^{(\ell)}\}_{j=1}^{3} \subseteq S^{0}(\Omega_{k,k_{1},k_{2}})$, depending on $k$ and $k_{1}$, that satisfy
$\sum_{j=1}^{3}\rho_{j}^{(\ell)}=1$ on $\Omega_{k,k_{1},k_{2}}$ and
$$
\inf_{j=1,2,3}\inf_{\supp \rho_{j}^{(\ell)}}\lambda^{-1}_{k,k_{1}}|\partial_{\eta_{j}} \phi|\geq c>0,
$$
where  $\lambda_{k,k_{1}}:=2^{k}2^{(\alpha-2)k_{1}}$ if $k_{1}-k>12$ and $\lambda_{k,k_{1}}:=2^{(\alpha-1)k}$ if $k_{1}-k\leq12$. Moreover, there exist $\{\rho_{j}^{(3)}\}_{j=1}^{3}\subseteq S^{0}(\R^{3}\setminus\{0\})$ satisfying $\sum_{j=1}^{3}\rho_{j}^{(3)}=1$ on $\R^{3}\setminus\{0\}$ and
$$
\inf_{j=1,2,3}\inf_{\supp \rho_{j}^{(3)}}\big|\eta_{j}/|\eta|\big|\geq c>0.
$$
Here $c$ is independent of $k,k_{1},k_{2}$ and the symbol class is defined in Definition \ref{def3.7}.
\end{lemma}

\begin{proof}
Let $\ell\in\{1,2\}$.
By Lemmas \ref{lem3.8}, \ref{lem3.9} and the triangle inequality, there exists a constant $C>0$ such that for all $(k_{1},k_{2})\in \chi_{k}^{\ell}$ and $(\xi,\eta)\in \Omega_{k,k_{1},k_{2}}$, we have
\EQN{
{\sqrt{3}C^{-1}\leq \lambda^{-1}_{k,k_{1}}|\nabla_{\eta} \phi(\xi,\eta)|\leq C.}
}
Choose a nonnegative function $\rho\in C^{\infty}_{c}(\R)$ such that $\rho(s)=1$ whenever $C^{-1}\leq |s|\leq C$ and $\rho(s)=0$ when $|s|>2C$ or $|s|<(2C)^{-1}$.
For $(k_{1},k_{2})\in \chi_{k}^{\ell}$ and  $j\in\{1,2,3\}$, define
\EQN{
\rho_{j}^{(\ell)}(\xi,\eta):=\rho(\lambda^{-1}_{k,k_{1}}\partial_{\eta_{j}} \phi(\xi,\eta))\bigg(\sum_{i=1}^{3}\rho(\lambda^{-1}_{k,k_{1}}\partial_{\eta_{i}} \phi(\xi,\eta))\bigg)^{-1}\in S^{0}(\Omega_{k,k_{1},k_{2}}),
}
where we use Lemmas \ref{lem3.8} and \ref{lem3.9} again.
Choose nonnegative $\tilde{\rho}\in C^{\infty}_{c}(\R)$ such that $\tilde{\rho}(s)=1$ if $(\sqrt{3})^{-1}\leq |s|\leq 1$ and $\tilde{\rho}(s)=0$ if $|s|>2$ or $|s|<(2\sqrt{3})^{-1}$. For  $j\in\{1,2,3\}$, define
\EQN{
\rho_{j}^{(3)}(\eta):=\tilde{\rho}(\eta_{j}/|\eta|)\bigg(\sum_{i=1}^{3}\tilde{\rho}(\eta_{i}/|\eta|)\bigg)^{-1}\in S^{0}(\R^{3}\setminus\{0\}).
}
The remaining results follow readily.
\end{proof}

{Although $\phi(\xi,\eta)^{-1}$, $\{(\partial_{\eta_{l}}\phi(\xi,\eta))^{-1}\}_{l=1}^{3}$ fail to satisfy the  conditions of Lemma \ref{lem3.3} when $|\xi-\eta|\cdot|\eta|^{-1}$ is small, the desired bilinear (or trilinear) estimates can still be established if we remove the factor $|\xi-\eta|$ by employing the following key lemma.}

{
\begin{lemma}\label{lem5.1}
Let $1\leq l\leq 3$. For all $N\geq1$, one has
$$
\phi(\xi,\eta)^{-1}=\phi(\xi,\eta)^{-1}\rho(\xi,\eta)^{-N}|\xi-\eta|^{N\alpha}+\sum_{j=1}^{N}\rho(\xi,\eta)^{-j}|\xi-\eta|^{(j-1)\alpha},
$$
and 
\begin{align*}
(\partial_{\eta_{l}}\phi(\xi,\eta))^{-1}=\frac{(\partial_{\eta_{l}}\omega(\eta-\xi))^{N}}{\partial_{\eta_{l}}\phi(\xi,\eta)(\partial_{\eta_{l}}\omega(\eta))^{N}}
+\sum_{j=1}^{N}(\partial_{\eta_{l}}\omega(\eta))^{-j}(\partial_{\eta_{l}}\omega(\eta-\xi))^{j-1},
\end{align*}
for all $(\xi,\eta)\in\R^{6}$ such  that $\eta,\phi(\xi,\eta),\partial_{\eta_{l}}\phi(\xi,\eta)\neq0$.
Here $\rho(\xi,\eta):=|\xi|^{\alpha}+|\eta|^{\alpha}$ and $\omega(\eta)=|\eta|^{\alpha}$.
\end{lemma}
}
\begin{proof}
We have
\begin{align*}
\phi(\xi,\eta)^{-1}=\phi(\xi,\eta)^{-1}-\rho(\xi,\eta)^{-1}+\rho(\xi,\eta)^{-1}=\phi(\xi,\eta)^{-1}\rho(\xi,\eta)^{-1}|\xi-\eta|^{\alpha}+\rho(\xi,\eta)^{-1}.
\end{align*}
By this relation again, we have
\begin{align*}
\phi(\xi,\eta)^{-1}=\phi(\xi,\eta)^{-1}\rho(\xi,\eta)^{-2}|\xi-\eta|^{2\alpha}+\rho(\xi,\eta)^{-2}|\xi-\eta|^{\alpha}+\rho(\xi,\eta)^{-1}.
\end{align*}
The other cases can be proven by induction.  This completes the proof of the first equality, and the second one can be derived in a similar manner.
\end{proof}

\section{Estimate for $u$}

From here we always assume $\alpha\in(1,2)$ and $\lambda\in(\frac{\alpha-1}{2},\frac{1}{2})$, which imply
\EQ{\label{e4.1s}
0<\lambda+\frac{3}{2}-\alpha<1.
}
Let $\PW,\PU$ be defined as in \eqref{F} and \eqref{G}. Suppose that $\delta$ is given by Theorem \ref{thm:main}. The norms of $w$ and $u$ are each composed of three components. For simplicity, we define the norms as follows:

For $w$:
\EQ{\label{e4.2s}
\norm{w}_{\Wone}:=& \sup_{t\geq0}\norm{w(t)}_{\Hs},\\
\norm{w}_{\Wtwo}:=& \sup_{t\geq0}\, (1+t)^{1+\delta}\norm{w(t)}_{L^\infty},\\
\norm{w}_{\Wthr}:=& \sup_{t\geq0}\norm{e^{itD^\alpha}w(t)}_{\PW}.
}

For $u$:
\EQ{\label{e4.3s}
\norm{u}_{\Uone}:=& \sup_{t\geq0}\norm{u(t)}_{\Hs},\\
\norm{u}_{\Utwo}:=& \sup_{t\geq0}\, (1+t)^{1+\delta}\norm{u(t)}_{L^\infty},\\
\norm{u}_{\Uthr}:=& \sup_{t\geq0}\norm{e^{itD^\alpha}u(t)}_{\PU}.
}
Let 
\EQ{\label{e4.4A}
W:=W_1\cap W_2\cap W_3 \quad and \quad U:=U_1\cap U_2\cap U_3.
}
From here we denote $f(t):=e^{itD^{\alpha}}w(t)$ and $g(t):=e^{itD^{\alpha}}u(t)$, which are the {\it profiles} of $w$ and $u$. 

In this section, we prove the following proposition, using the second equation in \eqref{e2.5}.

\begin{proposition}\label{prop5.1}
Suppose that $\eqref{e2.4}$ holds on $[0,\infty)$. Let $(W,U)$ be defined as in \eqref{e4.4A}. Then we have
\begin{align*}
\|u\|_{U}\les \|w\|_{W}+\|u\|_{U}^{2}.
\end{align*}
\end{proposition}

\begin{lemma}\label{lem4.2}
Let $t\geq0$ and $\{U_{i}\}_{i=2}^{3}$ be defined as in  \eqref{e4.3s}. Suppose that $g$ is the profile of $u$. For all $k\in\Z$, we have
\EQ{\label{e4.4s}
\|g_{k}(t)\|_{2}+2^{k}\|\nabla\hat{g}_{k}(t)\|_{2}\les2^{\lambda k-2k_{+}}\|u\|_{U_{3}},
}
\EQ{\label{e4.5s}
\|u_{k}(t)\|_{\infty}\les \min\{2^{\frac{3}{2} k-2k_{+}}\|u\|_{U_{1}},(1+t)^{-1-\delta}\|u\|_{U_{2}},2^{(\lambda+\frac{3}{2}) k-2k_{+}}\|u\|_{U_{3}}\},
}
\EQ{\label{e4.6s}
2^{-\alpha k}\|u_{\leq k}(t)\|_{\infty}\les\min\{2^{(\lambda-\alpha+\frac{3}{2})k},2^{-\alpha k}(1+t)^{-1-\delta},1\}\|u\|_{U_2\cap U_3},
}
\EQ{\label{e4.7a}
\sum_{\ell\in \{0,1\}}2^{-\alpha k}\sum_{k_{1}\leq k}2^{-(1-\ell) k_{1}}\|e^{-itD^{\alpha}}(\nabla^{\ell}\hat{g}_{k_{1}})^\vee(t)\|_{\infty}\les2^{-k}\min
\{2^{(\lambda-\alpha+\frac{3}{2})k},1\}\|u\|_{U_3}.
}
\end{lemma}

\begin{proof}
Inequalities \eqref{e4.4s} and \eqref{e4.5s} are obtained by applying \eqref{G}, \eqref{e4.3s} together with Bernstein's inequality.
By \eqref{e4.5s}, we have
\begin{align*}
\|u_{\leq k}(t)\|_{\infty}\leq \sum_{k_{1}\leq k}\|u_{k_{1}}(t)\|_{\infty}
\les \sum_{k_{1}\leq k}2^{(\lambda+\frac{3}{2})k_{1}-2k_{1,+}}\|u\|_{\Uthr}
\les 2^{(\lambda+\frac{3}{2})k}\|u\|_{\Uthr}.
\end{align*}
By \eqref{e4.3s}, we have
\begin{align*}
\|u_{\leq k}(t)\|_{\infty}\les \|u(t)\|_{\infty}\leq (1+t)^{-1-\delta}\|u\|_{\Utwo}.
\end{align*}
These, together with \eqref{e4.1s}, finish the proof of \eqref{e4.6s}.
By Bernstein's inequality, \eqref{e4.4s} and \eqref{e4.1s}, we have
\begin{align*}
\sum_{\ell\in \{0,1\}}2^{-\alpha k}\sum_{k_{1}\leq k}2^{-(1-\ell) k_{1}}\|e^{-itD^{\alpha}}(\nabla^{\ell}\hat{g}_{k_{1}})^\vee(t)\|_{\infty}&\les 2^{-\alpha k}\sum_{k_{1}\leq k}2^{(\lambda+\frac{1}{2})k_{1}-2k_{1,+}}\|u\|_{\Uthr}\\
&\les \min
\{2^{(\lambda-\alpha+\frac{1}{2})k},2^{-k}\}\|u\|_{\Uthr}.
\end{align*}
This finishes the proof of \eqref{e4.7a}.
\end{proof}

Proposition \ref{prop5.1} follows from the next Lemma \ref{lem4.3} and the second equation in \eqref{e2.5}.

\begin{lemma}\label{lem4.3}
Let $\{U_{i}\}_{i=1}^{3}$ and $U$ be defined as in \eqref{e4.3s} and \eqref{e4.4A}, respectively, and $B(u,\bar u)$ be given by \eqref{e2.2}. Then
\EQ{\label{e4.8a}
\norm{B(u,\bar u)}_{U}
\les\norm{u}_{U}^{2}.
}
Moreover, for all $t\geq0$ and $k\in\Z$, we have
\begin{align}\label{e4.8s}
\|B(u,\bar u)_{k}(t)\|_{2}\les 2^{\lambda k-2k_{+}}
\min\{1,2^{-\alpha k}(1+t)^{-1-\delta}\}\norm{u}_{U_{2}\cap U_{3}}^{2},
\end{align}
\begin{align}\label{e4.9s}
\|B(u,\bar u)_{k}(t)\|_{\infty}\les 2^{-\alpha k}(1+t)^{-2-2\delta}\norm{u}_{U_{2}\cap U_{3}}^{2},
\end{align}
\begin{align}\label{e4.10s}
\|e^{-itD^{\alpha}}(x e^{itD^{\alpha}}B(u,\bar u)_{k})(t)\|_{\infty}\les 2^{-k}(1+t)^{-1-\delta}\norm{u}_{U_{2}\cap U_{3}}^{2},
\end{align}
\begin{align}\label{e4.11s}
\|\nabla^{2}\big(e^{itD^{\alpha}}B(u,\bar{u})_{k}\big)^\wedge(t)\|_{2}\les 2^{(\lambda-2)k-2k_{+}}\max\{t2^{\alpha k},1\}\norm{u}_{U_{2}\cap U_{3}}^{2}.
\end{align}
In particular, for all $v\in \PU$, we have
\EQ{\label{e4.12s}
\|B(v,\bar{v})\|_{\PW}\les \|v\|_{\PU}^{2},
}
where $\PW$ and $\PU$ are defined as in \eqref{F} and \eqref{G}.
\end{lemma}

\begin{proof}
Suppose that $g$ is the profile of $u$.
By Lemma \ref{lem5.1}, we have 
\begin{align*}
&e^{it|\xi|^{\alpha}}B(u,\bar u)^\wedge(t,\xi)\\
=&\sum_{j=1}^{N+1}\sum_{\substack{k_{2}-k_{1}\geq 10\\
k_{1},k_{2}\in \Z}}2^{\frac{(j-1)\alpha}{2}(k_{1}-k_{2})}\int_{\R^3}e^{it\phi(\xi,\eta)}2^{-\alpha k_{2}}a_{j}(\xi,\eta) \big(2^{-k_{1}}|\xi-\eta|\big)^{(j-1)\alpha}\\
&\qquad\cdot\hat{g}_{k_{1}}(t,\xi-\eta)\hat{\bar{g}}_{k_{2}}(t,\eta)d\eta,
\end{align*}
where $a_{j}(\xi,\eta):=2^{\frac{(j-1)\alpha}{2}(k_{1}-k_{2})}2^{j\alpha k_{2}}\rho(\xi,\eta)^{-j}$ for $1\leq j\leq N$ and
$$
a_{N+1}(\xi,\eta):=2^{\frac{N\alpha}{2}(k_{1}-k_{2})}2^{(N+1)\alpha k_{2}}\phi(\xi,\eta)^{-1}\rho(\xi,\eta)^{-N}.
$$
By standard calculus, $\{a_{j}\}_{j=1}^{N+1}$ satisfy the condition in Lemma \ref{lem3.3} if $N$ is large enough and the integrands are not zero.  It suffices to show that $T(u,\bar u)$ satisfies the desired estimates, where 
\begin{align*}
e^{it|\xi|^{\alpha}}T(u,\bar u)^\wedge(t,\xi):=\sum_{k_{2}\in\Z}\int_{\R^3} e^{it\phi(\xi,\eta)}2^{-\alpha k_{2}}a(\xi,\eta)\hat{g}_{\leq k_{2}-10}(t,\xi-\eta)\hat{\bar{g}}_{k_{2}}(t,\eta)d\eta
\end{align*}
and $a$ satisfies the condition in Lemma \ref{lem3.3}. 
For $k\geq0$, we apply  Lemma  \ref{lem3.3} to obtain
\begin{align*}
\|T(u,\bar u)_{k}(t)\|_{\Hs}\les 2^{-\alpha k}\|u(t)\|_{\infty}\|u(t)\|_{\Hs}\leq 2^{-\alpha k}(1+t)^{-1-\delta}\|u\|_{\Uone}\|u\|_{\Utwo}.
\end{align*}
For $k\leq0$, by Lemma  \ref{lem3.3},  \eqref{e4.6s} and \eqref{e4.3s}, we obtain
\begin{align*}
\|T(u,\bar u)_{k}(t)\|_{\Hs}&\les
2^{-\alpha k}\|u(t)\|_{\Hs}\sup_{|k_{2}-k|\leq 2}\|u_{\leq k_{2}-10}(t)\|_{\infty}\\
&\les 2^{-\alpha k}\min\{(1+t)^{-1-\delta},2^{(\lambda+\frac{3}{2})k}\}\|u\|_{U_{1}}\|u\|_{U_{2}\cap U_{3}}.
\end{align*}
Then by the triangle inequality and \eqref{e4.1s}, we have
\begin{align*}
\|T(u,\bar u)\|_{\Uone}\les\sum_{k\in\Z}\min\{2^{(\lambda-\alpha+\frac{3}{2})k},2^{-\alpha k}\}\|u\|_{U}^{2}\les \|u\|_{U}^{2}.
\end{align*}
By Lemmas \ref{lem3.3} and \ref{lem4.2}, we have
\begin{align}\label{e5.3}
\|T(u,\bar u)_{k}(t)\|_{2}&\les \sup_{|k_{2}-k|\leq 2}2^{-\alpha k}\|u_{\leq k_{2}-10}(t)\|_{\infty}\|g_{k_{2}}(t)\|_{2}\nonumber\\
&\les \min\{1,2^{-\alpha k}(1+t)^{-1-\delta}\}2^{\lambda k-2k_{+}}\|u\|_{U_{2}\cap U_{3}}^{2},
\end{align}
\begin{align}\label{e4.15a}
\|T(u,\bar u)_{k}(t)\|_{\infty}&\les \sup_{|k_{2}-k|\leq 2}2^{-\alpha k}\|u_{\leq k_{2}-10}(t)\|_{\infty}\|u_{k_{2}}(t)\|_{\infty}\nonumber\\
&\les\min\{2^{(\lambda-\alpha+\frac{3}{2})k},2^{-\alpha k}\}(1+t)^{-1-\delta}\|u\|_{U_{2}\cap U_{3}}^{2}.
\end{align}
Then by \eqref{e4.1s} again, we have
\begin{align*}
\|T(u,\bar u)\|_{\Utwo}\les\sum_{k\in\Z}\min\{2^{(\lambda-\alpha+\frac{3}{2})k},2^{-\alpha k}\}\|u\|_{U_{2}\cap U_{3}}^{2}\les \|u\|_{U_{2}\cap U_{3}}^{2}.
\end{align*}

By integration by parts, we have
\begin{align*}
&\psi_{k}(\xi)\nabla_{\xi}\big(e^{it|\xi|^{\alpha}}T(u,\bar{u})^\wedge(t,\xi)\big)\\
=&\sum_{|k_{2}-k|\leq 2}i\psi_{k}(\xi)\int_{\R^3} e^{it\phi}2^{-\alpha k_{2}}a(\xi,\eta)t(\nabla_{\xi}\phi+\nabla_{\eta}\phi)\hat{g}_{\leq k_{2}-10}(t,\xi-\eta)\hat{\bar{g}}_{k_{2}}(t,\eta)d\eta\\
&+\psi_{k}(\xi)\int_{\R^3} e^{it\phi}2^{-\alpha k_{2}}(\nabla_{\xi}+\nabla_{\eta})a(\xi,\eta)\hat{g}_{\leq k_{2}-10}(t,\xi-\eta)\hat{\bar{g}}_{k_{2}}(t,\eta)d\eta\\
&+\psi_{k}(\xi)\int_{\R^3} e^{it\phi}2^{-\alpha k_{2}}a(\xi,\eta)\hat{g}_{\leq k_{2}-10}(t,\xi-\eta)\nabla_{\eta}\hat{\bar{g}}_{k_{2}}(t,\eta)d\eta,
\end{align*}
which, together with Lemmas \ref{lem3.3} and  \ref{lem4.2}, implies  that 
\begin{align}\label{e5.5a}
&\|\psi_{k}\nabla \big(e^{itD^{\alpha}}T(u,\bar{u})\big)^\wedge(t)\|_{2}\nonumber\\
&\les \sup_{|k_{2}-k|\leq 2}(t+2^{-\alpha k})\|u_{\leq k_{2}-10}(t)\|_{\infty}\big(2^{-k}\|g_{k_{2}}(t)\|_{2}+\|\nabla\hat{g}_{k_{2}}(t)\|_{2}\big)\nonumber\\
&\les 2^{(\lambda-1)k-2k_{+}}\|u\|_{U_{2}\cap U_{3}}^{2},
\end{align}
and
\begin{align}\label{e4.17a}
&\|e^{-itD^{\alpha}}(x e^{itD^{\alpha}}T(u,\bar u))_{k}(t)\|_{\infty}\nonumber\\
&\les \sup_{|k_{2}-k|\leq 2}\big[(t+2^{-\alpha k})\|u_{\leq k_{2}-10}(t)\|_{\infty}\cdot2^{-k}\|u_{k_{2}}(t)\|_{\infty}
\nonumber\\
&\quad +\|u_{\leq k_{2}-10}(t)\|_{\infty}\cdot2^{-\alpha k}\|e^{-itD^{\alpha}}(\nabla\hat{g}_{k_{2}})^\vee(t)\|_{\infty}\big]\nonumber\\
&\les 2^{-k}(1+t)^{-1-\delta}\|u\|_{U_{2}\cap U_{3}}^{2}.
\end{align}
By \eqref{e5.3} and \eqref{e5.5a}, we have
\begin{align*}
\|T(u,\bar u)\|_{\Uthr}=\sup_{t\geq0}\|e^{itD^{\alpha}}T(u,\bar u)(t)\|_{\PU}\les \|u\|_{U_{2}\cap U_{3}}^{2}.
\end{align*}
Based on the estimates for $\{\|T(u,\bar u)\|_{U_{i}}\}_{i=1}^{3}$, \eqref{e4.8a} follows. \eqref{e4.8s}, \eqref{e4.9s} and \eqref{e4.10s} are derived from \eqref{e5.3}, \eqref{e4.15a} and \eqref{e4.17a}, respectively.
Let $\{\rho_{l}^{(3)}\}_{l=1}^{3}$ be given by Lemma \ref{lem3.12aa}. {By integration by parts, we can write $\psi_{k}(\xi)\partial_{\xi_{i},\xi_{j}}^{2}\big(e^{it|\xi|^{\alpha}}T(u,\bar{u})^\wedge(t,\xi)\big)$ as the sum of following terms,
\EQN{
i\psi_{k}(\xi)\sum_{(k_{1},k_{2})\in \chi_{k}^{3}}2^{-\alpha k_{2}}\sum_{l=1}^{3}\int_{\R^3} e^{it\phi(\xi,\eta)}t\partial_{\eta_{l}}\bigg(\big(\frac{\partial_{\xi_{i}}\phi\partial_{\xi_{j}}\phi}{\partial_{\eta_{l}}\phi} a\big)(\xi,\eta)\hat{g}_{k_{1}}(t,\xi-\eta)\bigg)(\rho_{l}^{(3)}\hat{\bar{g}}_{k_{2}})(t,\eta)d\eta,
}
\EQN{
i\psi_{k}(\xi)\sum_{(k_{1},k_{2})\in \chi_{k}^{3}}2^{-\alpha k_{2}}\sum_{l=1}^{3}\int_{\R^3} e^{it\phi(\xi,\eta)}t\big(\rho_{l}^{(3)}\frac{\partial_{\xi_{i}}\phi\partial_{\xi_{j}}\phi}{\partial_{\eta_{l}}\phi} a\big)(\xi,\eta)\hat{g}_{ k_{1}}(t,\xi-\eta)\partial_{\eta_{l}}(\rho_{l}^{(3)}\hat{\bar{g}}_{k_{2}})(t,\eta)d\eta,
}
\EQN{
i\psi_{k}(\xi)\sum_{(k_{1},k_{2})\in \chi_{k}^{3}}2^{-\alpha k_{2}}\int_{\R^3} e^{it\phi(\xi,\eta)}t\big(\partial_{\xi_{i}}\phi(\xi,\eta)\partial_{\xi_{j}}+\partial_{\xi_{j}}\phi(\xi,\eta)\partial_{\xi_{i}}\big)a(\xi,\eta)\hat{g}_{ k_{1}}(t,\xi-\eta)\hat{\bar{g}}_{k_{2}}(t,\eta)d\eta,
}
\EQN{
i\psi_{k}(\xi)\sum_{(k_{1},k_{2})\in \chi_{k}^{3}}2^{-\alpha k_{2}}\int_{\R^3} e^{it\phi(\xi,\eta)}t\partial_{\xi_{i},\xi_{j}}^{2}\phi(\xi,\eta)a(\xi,\eta)\hat{g}_{ k_{1}}(t,\xi-\eta)\hat{\bar{g}}_{k_{2}}(t,\eta)d\eta,
}
\EQN{
i\psi_{k}(\xi)\sum_{(k_{1},k_{2})\in \chi_{k}^{3}}2^{-\alpha k_{2}}\int_{\R^3} e^{it\phi(\xi,\eta)}ta(\xi,\eta)(\partial_{\xi_{i}}\phi(\xi,\eta)\partial_{\xi_{j}}+\partial_{\xi_{j}}\phi(\xi,\eta)\partial_{\xi_{i}}
)\hat{g}_{ k_{1}}(t,\xi-\eta)\hat{\bar{g}}_{k_{2}}(t,\eta)d\eta,
}
\EQN{
i\psi_{k}(\xi)\sum_{(k_{1},k_{2})\in \chi_{k}^{3}}2^{-\alpha k_{2}}\int_{\R^3} e^{it\phi(\xi,\eta)}ta(\xi,\eta)\partial_{\eta_{j}}\phi(\xi,\eta)\partial_{\xi_{i}}\hat{g}_{ k_{1}}(t,\xi-\eta)\hat{\bar{g}}_{k_{2}}(t,\eta)d\eta,
}
\EQN{
\psi_{k}(\xi)\sum_{(k_{1},k_{2})\in \chi_{k}^{3}}2^{-\alpha k_{2}}\int_{\R^3} e^{it\phi(\xi,\eta)}\big(\partial_{\xi_{i}}a(\xi,\eta)\partial_{\xi_{j}}+\partial_{\xi_{j}}a(\xi,\eta)\partial_{\xi_{i}}\big)\hat{g}_{ k_{1}}(t,\xi-\eta)\hat{\bar{g}}_{k_{2}}(t,\eta)d\eta,
}
\EQN{
\psi_{k}(\xi)\sum_{(k_{1},k_{2})\in \chi_{k}^{3}}2^{-\alpha k_{2}}\int_{\R^3} e^{it\phi(\xi,\eta)}\partial_{\xi_{i},\xi_{j}}^{2}a(\xi,\eta)\hat{g}_{ k_{1}}(t,\xi-\eta)\hat{\bar{g}}_{k_{2}}(t,\eta)d\eta,
}
\EQN{
\psi_{k}(\xi)\sum_{(k_{1},k_{2})\in \chi_{k}^{3}}2^{-\alpha k_{2}}\int_{\R^3} e^{it\phi(\xi,\eta)}\partial_{\xi_{i}}\hat{g}_{ k_{1}}(t,\xi-\eta)\big(a(\xi,\eta)\partial_{\eta_{j}}+\partial_{\eta_{j}}a(\xi,\eta)\big)\hat{\bar{g}}_{k_{2}}(t,\eta)d\eta.
}}
We can use Lemma \ref{lem5.1} to treat $(\partial_{\eta_{l}}\phi)^{-1}$ and then apply Lemma \ref{lem4.2} to obtain
\begin{align}\label{e4.15s}
&\|\psi_{k}\nabla^{2} \big(e^{itD^{\alpha}}T(u,\bar{u})\big)^\wedge(t)\|_{2}\nonumber\\
&\les \sup_{|k_{2}-k|\leq 2}\bigg[\big(2^{-\alpha k}\|u_{\leq k_{2}-10}(t)\|_{\infty}+2^{(1-\alpha)k}\sum_{k_{1}\leq k_{2}}\|e^{-itD^{\alpha}}(\nabla\hat{g}_{k_{1}})^\vee(t)\|_{\infty}\big)\cdot2^{-2k}\|g_{k_{2}}(t)\|_{2}\nonumber\\
&\quad+t2^{k}\sum_{k_{1}\leq k_{2}}\big(2^{-k_{1}}\|u_{k_{1}}(t)\|_{\infty}+\|e^{-itD^{\alpha}}(\nabla\hat{g}_{k_{1}})^\vee(t)\|_{\infty}\big)
\big(2^{-2k}\|g_{k_{2}}(t)\|_{2}+2^{-k}\|\nabla\hat{g}_{k_{2}}(t)\|_{2}\big)\bigg]\nonumber\\
&\les \big(t2^{(\lambda+\alpha-2)k-2k_{+}}+2^{(\lambda-2)k-2k_{+}}\big)\|u\|_{U_{2}\cap U_{3}}^{2}.
\end{align}
Thus, we have established  \eqref{e4.11s}. 

By the same way as in the proof of \eqref{e5.3}, \eqref{e5.5a} and \eqref{e4.15s}, we have 
\EQN{
\|B(v,\bar{v})\|_{\PW}\les\sup_{|\beta|\leq2}\sup_{k\in\Z}2^{(|\beta|-\lambda)k+2k_{+}}\|\psi_{k}(\xi)\partial_{\xi}^{\beta} (B(v,\bar{v}))^\wedge(\xi)\|_{2}\les \|v\|_{\PU}^{2}.
}
This finishes the proof of  \eqref{e4.12s}.
\end{proof}

\begin{remark}\label{rem4.8}
The estimates \eqref{e4.8s}-\eqref{e4.11s} will be used in the next section. \eqref{e4.11s} is an estimate for the profile of $u$ in $F$-norm. {We couldn't obtain an estimate that is uniform in $t$}, even if using $W$-norm for u on the right-hand side. This is the main reason why we use $G$-norm for the profile of $u$.
\end{remark}

\begin{lemma}\label{lem4.7a}
Define $w_{0}:=u_{0}+iB(u_{0},\bar u_{0})$. Then we have
\EQN{
\|w_{0}\|_{\Hs}+\|w_{0}\|_{F}\les \|u_{0}\|_{\Hs}+\|u_{0}\|_{F}+\|u_{0}\|_{\Hs}^{2}+\|u_{0}\|_{G}^{2}.
}
\end{lemma}

\begin{proof}
By the way similar to the proof of \eqref{e4.8a}, we have 
\EQN{
\|w_{0}\|_{\Hs}\les \|u_{0}\|_{\Hs}+\|u_{0}\|_{\Hs}^{2}+\|u_{0}\|_{G}^{2}.
}
By \eqref{e4.12s}, we have
\EQN{
\|w_{0}\|_{\PW}\les \|u_{0}\|_{\PW}+\|u_{0}\|_{\PU}^{2}.
}
These finish the proof of the lemma.
\end{proof}

\section{Estimate for $w$}

In this section, we prove the following proposition, using the first equations in \eqref{e2.5}.

\begin{proposition}\label{prop4.1}
Suppose that $\eqref{e1.1}$ and $\eqref{e2.5}$ hold on $[0,\infty)$. Let $(W,U)$ be defined as in \eqref{e4.4A}. Then we have
\begin{align*}
\|w\|_{W}\les \|w_{0}\|_{\Hs}+\|w_{0}\|_{\PW}+\sum_{l=2}^{4}\|(w,u)\|_{W\times U}^{l}.
\end{align*}
\end{proposition}

First we prove some basic estimates. 

\begin{lemma}\label{lem4.4}
For all $t>0$ and $k\in \Z$, we have
\EQN{
\|u_{k}(t)\|_{6}\les t^{-1}2^{(\lambda-\alpha+1)k-2k_{+}}\|u\|_{\Uthr}.
}
\end{lemma}

\begin{proof}
Let $g$ be the profile of $u$. By Lemma \ref{lem3.1}, \cite[Theorem 1.4.19]{G14}, H\"older's inequality, and \eqref{e4.4s}, we have
\EQN{
\|e^{-itD^{\alpha}}g_{k}(t)\|_{L^{6,2}}&\les t^{-1}2^{(2-\alpha)k}\|g_{k}(t)\|_{L^{\frac{6}{5},2}}
\les t^{-1}2^{(2-\alpha)k}\|xg_{k}(t)\|_{L^{2}}\big\||x|^{-1}\big\|_{L^{3,\infty}}\\
&\les t^{-1}2^{(\lambda-\alpha+1)k-2k_{+}}\|u\|_{\Uthr}.
}
Since $L^{6,2}\subseteq L^{6}$, the proof is finished.
\end{proof}

\begin{lemma}\label{lem4.5}
Suppose that $u$ satisfies \eqref{e1.1} on $[0,\infty)$. Let $g$ be the profile of $u$ and $\{U_{i}\}_{i=1}^{3}$ be defined as in \eqref{e4.3s}. For all $t>0$ and $k\in \Z$, we have
\EQ{\label{e5.13A}
\|\psi_{k}\partial_{t}\hat{g}(t)\|_{2}=\|(|u|^{2})_{k}(t)\|_{2}\les 2^{-2k_{+}}\min\{t^{-1}2^{\lambda k}\norm{u}_{U_{2}\cap U_{3}}^{2},(1+t)^{-1-\delta}\norm{u}_{U_{1}\cap U_{2}}^{2}\},
}
\EQ{\label{e5.14A}
\|\psi_{k}\nabla\partial_{t}\hat{g}(t)\|_{2}\les t^{-1}2^{(\lambda-1)k-2k_{+}}
\max\{t2^{\alpha k},1\}\norm{u}_{U_{2}\cap U_{3}}^{2},
}
\EQ{\label{e5.15A}
\|e^{-itD^{\alpha}}\partial_{t}g(t)\|_{\infty}= \|u(t)\|_{\infty}^{2}\leq (1+t)^{-2-2\delta}\norm{u}_{U_{2}}^{2}.
}
\end{lemma}

\begin{proof}
By symmetry, we have
\begin{align*}
\|(|u|^{2})_{k}(t)\|_{2}\les \sum_{k_{1}\geq k-12}\sum_{|k_{2}-k_{1}|\leq10}\|(u_{k_{1}}\bar{u}_{k_{2}})_{k}(t)\|_{2}+
\sum_{|k_{1}-k|\leq2}\big\|\sum_{k_{2}\leq k_{1}-10}(u_{k_{1}}\bar{u}_{k_{2}})_{k}(t)\big\|_{2}.
\end{align*}
By Bernstein's inequality, H\"older's inequality, Lemmas \ref{lem4.4} and \eqref{e4.1s}, we have
\begin{align*}
\sum_{k_{1}\geq k-12}\sum_{|k_{2}-k_{1}|\leq10}\|(u_{k_{1}}\bar{u}_{k_{2}})_{k}(t)\|_{2}&\leq \sum_{k_{1}\geq k-12}\sum_{|k_{2}-k_{1}|\leq10}2^{\frac{1}{2}k}\|u_{k_{1}}(t)\|_{2}\|u_{k_{2}}(t)\|_{6}\\
&\les t^{-1}2^{\frac{1}{2}k-2k_{+}}\sum_{k_{1}\geq k-12}2^{(2\lambda-\alpha+1)k_{1}}2^{-2k_{1,+}}\norm{u}_{U_{3}}^{2}\\
&\les t^{-1}2^{\lambda k-2k_{+}}\sum_{k_{1}\geq k-12}2^{(\lambda-\alpha+\frac{3}{2})k_{1}}2^{-2k_{1,+}}
\norm{u}_{U_{3}}^{2}\\
&\les t^{-1}2^{\lambda k-2k_{+}}\norm{u}_{U_{3}}^{2}.
\end{align*}
By H\"older's inequality, we have
\begin{align*}
\sum_{|k_{1}-k|\leq2}\big\|\sum_{k_{2}\leq k_{1}-10}(u_{k_{1}}\bar{u}_{k_{2}})_{k}(t)\big\|_{2}&\les
\sup_{|k_{1}-k|\leq 2}\|u_{k_{1}}(t)\|_{2}\|u_{\leq k_{1}-10}(t)\|_{\infty}\\
&\les 2^{\lambda k-2k_{+}}\|g(t)\|_{\PU}\|u(t)\|_{\infty}\\
&\les (1+t)^{-1-\delta}2^{\lambda k-2k_{+}}\norm{u}_{U_{2}\cap U_{3}}^{2},
\end{align*}
which means
\begin{align}\label{e5.5}
\|(|u|^{2})_{k}(t)\|_{2}\les
t^{-1}2^{\lambda k-2k_{+}}\norm{u}_{U_{2}\cap U_{3}}^{2}.
\end{align}
If $k\leq0$, we have
\begin{align*}
\|(u\bar{u})_{k}(t)\|_{2}\les \|(u\bar{u})(t)\|_{2}\leq \|u(t)\|_{2}\|u(t)\|_{\infty}\les\|u(t)\|_{\Hs}\|u(t)\|_{\infty}\les(1+t)^{-1-\delta}\norm{u}_{U_{1}\cap U_{2}}^{2}.
\end{align*}
If $k\geq0$, we have
\begin{align*}
\|((u\bar{u})_{HH})_{k}(t)\|_{2}
\leq& \sum_{k_{1}\geq k-12}\sum_{|k_{2}-k_{1}|\leq10}\|u_{k_{1}}(t)\bar{u}_{k_{2}}(t)\|_{2}\les \sum_{k_{1}\geq k-12}\|u_{k_{1}}(t)\|_{2}\|u(t)\|_{\infty}\\
\leq& (1+t)^{-1-\delta}\sum_{k_{1}\geq k-12}2^{-2 k_{1,+}}\norm{u}_{U_{1}\cap U_{2}}^{2}\les (1+t)^{-1-\delta}2^{-2 k_{+}}\norm{u}_{U_{1}\cap U_{2}}^{2},
\end{align*}
and
\begin{align*}
\|((u\bar{u})_{HL})_{k}(t)\|_{2}
\leq& \sup_{|k_{1}-k|\leq 2}\|u_{k_{1}}(t)\bar{u}_{\leq k_{1}-10}(t)\|_{2}\\
\les& \sup_{|k_{1}-k|\leq 2}\|u_{k_{1}}(t)\|_{2}\|u(t)\|_{\infty}\les (1+t)^{-1-\delta}2^{-2 k_{+}}\norm{u}_{U_{1}\cap U_{2}}^{2}.
\end{align*}
By symmetry, there holds
\begin{align*}
\|(u\bar{u})_{k}(t)\|_{2}\les\|((u\bar{u})_{HH})_{k}(t)\|_{2}
+\|((u\bar{u})_{HL})_{k}(t)\|_{2}
\les (1+t)^{-1-\delta}2^{-2 k_{+}}\norm{u}_{U_{1}\cap U_{2}}^{2}.
\end{align*}
These, together with \eqref{e5.5} and \eqref{e1.1}, complete the proof of \eqref{e5.13A}.

By \eqref{e1.1}, we have
\begin{align*}
&\partial_{t}\hat{g}(t,\xi)=e^{it|\xi|^{\alpha}}(u\bar{u})^\wedge(t,\xi)
=\int_{\R^3}e^{it\phi(\xi,\eta)}\hat{g}(t,\xi-\eta)\hat{\bar{g}}(t,\eta)d\eta.
\end{align*}
We can write $\psi_{k}(\xi)\partial_{\xi_{j}}\partial_{t}\hat{g}(t,\xi)$ as
\begin{align*}
&
\psi_{k}(\xi)\int_{\R^3} e^{it\phi(\xi,\eta)}\partial_{\xi_{j}}\hat{g}(t,\xi-\eta)\hat{\bar{g}}(t,\eta)d\eta+
it\psi_{k}(\xi)\int_{\R^3} e^{it\phi(\xi,\eta)}\partial_{\xi_{j}}\phi(\xi,\eta)\hat{g}(t,\xi-\eta)\hat{\bar{g}}(t,\eta)d\eta\\
&=\psi_{k}(\xi)\bigg(\sum_{(k_{1},k_{2})\in \chi_{k}^{1}}+\sum_{(k_{1},k_{2})\in \chi_{k}^{2}}+\sum_{(k_{1},k_{2})\in \chi_{k}^{3}}\bigg)\int_{\R^3} e^{it\phi(\xi,\eta)}\partial_{\xi_{j}}\hat{g}_{k_{1}}(t,\xi-\eta)\hat{\bar{g}}_{k_{2}}(t,\eta)d\eta\\
&+it\psi_{k}(\xi)\bigg(\sum_{(k_{1},k_{2})\in \chi_{k}^{1}}+\sum_{(k_{1},k_{2})\in \chi_{k}^{2}}+\sum_{(k_{1},k_{2})\in \chi_{k}^{3}}\bigg)\int_{\R^3} e^{it\phi(\xi,\eta)}\partial_{\xi_{j}}\phi(\xi,\eta)\hat{g}_{k_{1}}(t,\xi-\eta)\hat{\bar{g}}_{k_{2}}(t,\eta)d\eta\\
&=:\sum_{\ell=1}^{6}I_{\ell}(t,\xi).
\end{align*}
By Lemma \ref{lem3.3}, we have {
\begin{align*}
\|I_{1}(t)\|_{2}&\les \sup_{|l|\leq 10}\sum_{k_{1}\geq k-12}\|\nabla\hat{g}_{k_{1}}(t)\|_{2}\|u_{k_{1}+l}(t)\|_{\infty}\les \|u(t)\|_{\infty}\sum_{k_{1}\geq k-12}2^{(\lambda-1)k_{1}-2k_{1,+}}\norm{u}_{U_{3}}\\
&\les (1+t)^{-1-\delta}2^{(\lambda-1)k-2k_{+}}\norm{u}_{U_{2}\cap U_{3}}^{2},
\end{align*}
and
\begin{align*}
\|I_{2}(t)\|_{2}\les \sup_{|k_{1}-k|\leq 2}\|\nabla\hat{g}_{k_{1}}(t)\|_{2}\|u_{\leq k_{1}-10}(t)\|_{\infty}
\les (1+t)^{-1-\delta}2^{(\lambda-1)k-2k_{+}}\norm{u}_{U_{2}\cap U_{3}}^{2}.
\end{align*}}
By integration by parts, we have
\begin{align*}
I_{3}(t,\xi)=&\psi_{k}(\xi)\sum_{|k_{2}-k|\leq 2}\bigg(it\int_{\R^3} e^{it\phi(\xi,\eta)}\partial_{\eta_{j}}\phi(\xi,\eta)\hat{g}_{\leq k_{2}-10}(t,\xi-\eta)\hat{\bar{g}}_{k_{2}}(t,\eta)d\eta\\
&\hskip 3.5cm+\int_{\R^3} e^{it\phi(\xi,\eta)}\hat{g}_{\leq k_{2}-10}(t,\xi-\eta)\partial_{\eta_{j}}\hat{\bar{g}}_{k_{2}}(t,\eta)d\eta\bigg),
\end{align*}
which, together with Lemma \ref{lem3.3}, implies 
\begin{align*}
\|I_{3}(t)\|_{2}&\les \sup_{|k_{2}-k|\leq 2}\|u_{\leq k_{2}-10}(t)\|_{\infty}
\big(t2^{(\alpha-1)k}\|g_{k_{2}}(t)\|_{2}+\|\nabla\hat{g}_{k_{2}}(t)\|_{2}\big)\\
&\les \big((1+t)^{-1-\delta}2^{(\lambda-1)k-2k_{+}}+2^{(\alpha-1)k}2^{\lambda k-2k_{+}}\big)\norm{u}_{U_{2}\cap U_{3}}^{2}.
\end{align*}
Let $\{\rho_{l}^{(1)}\}_{l=1}^{3}$ be given by Lemma \ref{lem3.12aa}. For $I_{4}(t)$, we  use integration by parts to get
\begin{align*}
I_{4}(t,\xi)=&\psi_{k}(\xi)\sum_{l=1}^{3}\sum_{(k_{1},k_{2})\in \chi_{k}^{1}}\bigg[-\int_{\R^3} e^{it\phi(\xi,\eta)}\hat{g}_{k_{1}}(t,\xi-\eta)\partial_{\eta_{l}}\bigg(\big(\rho_{l}^{(1)}\frac{\partial_{\xi_{j}}\phi}{\partial_{\eta_{l}}\phi}\big)(\xi,\eta)\hat{\bar{g}}_{k_{2}}(t,\eta)\bigg)d\eta\\
&\hskip 3cm-\int_{\R^3} e^{it\phi(\xi,\eta)}\partial_{\xi_{l}}\hat{g}_{k_{1}}(t,\xi-\eta)\big(\rho_{l}^{(1)}\frac{\partial_{\xi_{j}}\phi}{\partial_{\eta_{l}}\phi}\big)(\xi,\eta)\hat{\bar{g}}_{k_{2}}(t,\eta)d\eta\bigg].
\end{align*}
Then we follow the same way as in the proof of \eqref{e5.5} to obtain
\begin{align*}
\|I_{4}(t)\|_{2}\les t^{-1}2^{(\lambda-1)k-2k_{+}}\norm{u}_{U_{2}\cap U_{3}}^{2}.
\end{align*}
For $I_{5}(t)$ and $I_{6}(t)$, we apply Lemma \ref{lem3.3} to get 
\begin{align*}
\sum_{\ell=5}^{6}\|I_{\ell}(t)\|_{2}\les t2^{(\alpha-1)k}\sup_{|l|\leq 2}\|g_{k+l}(t)\|_{2}\|u_{\leq k+l-10}(t)\|_{\infty}
\les 2^{(\alpha-1)k}2^{\lambda k-2k_{+}}\norm{u}_{U_{2}\cap U_{3}}^{2}.
\end{align*}
These imply \eqref{e5.14A}. \eqref{e5.15A} can be proven by \eqref{e1.1} and the definition of $\Utwo$ in \eqref{e4.3s}.
\end{proof}

\begin{corollary}
For all $t>0$, we have 
\EQ{\label{e5.27A}
\sum_{j\in\Z}\big\|u_{j}(t)\big\|_{\infty}\les  (1+t)^{-1-\frac{\delta}{2}}\|u\|_{U_{2}\cap U_{3}}^{2},
}
\EQ{\label{e5.28A}
\sum_{j\in\Z}\big\|B(u,\bar{u})_{j}(t)\big\|_{\infty}\les (1+t)^{-1-\delta}\|u\|_{U_{2}\cap U_{3}}^{2}.
}
If $u$ satisfies \eqref{e1.1} on $[0,\infty)$, then for all $t>0$, we have
\EQ{\label{e5.27AA}
\sum_{j\in\Z}\big\|e^{-itD^{\alpha}}\partial_{t}g_{j}(t)\big\|_{\infty}\les t^{-1}(1+t)^{-1-\frac{\delta}{2}}\|u\|_{U_{2}\cap U_{3}}^{2},
}
where $g$ is the profile of $u$ and $\delta$ is given by Theorem \ref{thm:main}.
\end{corollary}

\begin{proof}
Without loss of generality, assume that $\|u\|_{U_{2}\cap U_{3}}=1$.
By Bernstein's inequality, \eqref{e4.5s}, 
\eqref{e5.13A},
\eqref{e5.15A},
\eqref{e4.8s} and
\eqref{e4.1s}, for all $t>0$,
we have
\EQN{
\sum_{j\in\Z}\big\|u_{j}(t)\big\|_{\infty}\les \sum_{j\in\Z}\min\{2^{(\lambda+\frac{3}{2})j-2j_{+}},(1+t)^{-1-\delta}\}\les (1+t)^{-1-\frac{\delta}{2}},
}
\EQN{
\sum_{j\in\Z}\big\|B(u,\bar{u})_{j}(t)\big\|_{\infty}\les (1+t)^{-1-\delta}\sum_{j\in\Z}2^{(\lambda+\frac{3}{2}-\alpha)j-2j_{+}}\les (1+t)^{-1-\delta},
}
\EQN{
\sum_{j\in\Z}\big\|(|u|^{2})_{j}(t)\big\|_{\infty}\les \sum_{j\in\Z}\min\{t^{-1}2^{(\lambda+\frac{3}{2})j-2j_{+}},(1+t)^{-2-2\delta}\}\les t^{-1}(1+t)^{-1-\frac{\delta}{2}}.
}    
We remark that if \eqref{e1.1} holds on $[0,\infty)$, then $e^{-itD^{\alpha}}\partial_{t}g(t)=|u(t)|^{2}$ for all $t\geq0$. These complete the proof.
\end{proof}

\begin{lemma}\label{lem4.6}
For all $t>0$ and $k\in\Z$, we have 
\begin{align}\label{e4.5}
C_{k,1}(t):=2^{-\lambda k}\sum_{ k_{1}\geq k-12}2^{-2k_{1,+}}\min\{2^{\frac{3}{2}k+2\lambda k_{1}},t^{-\frac{3}{2}}2^{(2\lambda-\frac{3}{2}\alpha+\frac{3}{2})k_{1}}\}\les (1+t)^{-1-\varepsilon},
\end{align}
\begin{align}\label{e4.6}
C_{k,2}(t):=\sum_{k_{2}\in\Z}2^{-2k_{2,+}}\min\{2^{(\lambda+\frac{3}{2}) k_{2}},t^{-\frac{3}{2}}2^{(\lambda-\frac{3}{2}\alpha+\frac{3}{2})k_{2}}\}\les (1+t)^{-1-\delta},
\end{align}
where $\varepsilon:=\min\{\frac{2\lambda}{\alpha-1},\frac{3}{2}-\lambda\}-1>0$ and $\delta$ is given by Theorem \ref{thm:main}.
\end{lemma}

\begin{proof}
If $\frac{\alpha-1}{2}<\lambda\leq \frac{3}{2}\frac{\alpha-1}{\alpha+1}$, we have
\begin{align*}
&C_{k,1}(t)\les 2^{-2k_{+}}2^{(\frac{3}{2}-\frac{\alpha+1}{\alpha-1}\lambda)k}t^{-\frac{2\lambda}{\alpha-1}}\les t^{-\frac{2\lambda}{\alpha-1}}.
\end{align*}
If $\frac{3}{2}\frac{\alpha-1}{\alpha+1}<\lambda< \frac{1}{2}$, we can choose $\tau_{1}\in(0,2\lambda)$ satisfying $\frac{3}{2}-\lambda=\frac{2\lambda-\tau_{1}}{\alpha-1}$ and $2\lambda-\tau_{1}<\frac{3(\alpha-1)}{2}$ such that
\begin{align*}
C_{k,1}(t)&\les 2^{-\lambda k}\sum_{ k_{1}\geq k-12}\min\{2^{\frac{3}{2}k+(2\lambda-\tau_{1}) k_{1}},t^{-\frac{3}{2}}2^{(2\lambda-\frac{3}{2}\alpha+\frac{3}{2}-\tau_{1})k_{1}}\}\\
&\les 2^{(\frac{3}{2}-\lambda-\frac{2\lambda-\tau_{1}}{\alpha-1})k}t^{-\frac{2\lambda-\tau_{1}}{\alpha-1}}= t^{\lambda-\frac{3}{2}}.
\end{align*}
These, together with the estimate
\begin{align*}
C_{k,1}(t)\leq\sum_{ k_{1}\geq k-12}2^{-2k_{1,+}}2^{(\frac{3}{2}-\lambda)k+2\lambda k_{1}}
\leq\sum_{ k_{1}\geq k-12}2^{-2k_{1,+}}2^{(\lambda+\frac{3}{2})k_{1}}\les 1,
\end{align*}
imply that \eqref{e4.5} holds {for all $t>0$ and $k\in\Z$.} 

Next we show \eqref{e4.6}. If $\frac{\alpha-1}{2}<\lambda\leq\frac{3(\alpha-1)}{2}$, we can choose a small $\tau_{2}\in[0,2)$ such that $\delta=\min\{\frac{\lambda+\frac{3}{2}-\tau_{2}}{\alpha},\frac{3}{2}\}-1>0$ and
\begin{align*}
C_{k,2}(t)\leq\sum_{ k_{2}\in\Z}\min\{2^{(\lambda+\frac{3}{2}-\tau_{2}) k_{2}},t^{-\frac{3}{2}}2^{(\lambda-\frac{3}{2}\alpha+\frac{3}{2}-\tau_{2})k_{2}}\}\les t^{-\frac{\lambda+\frac{3}{2}-\tau_{2}}{\alpha}}.
\end{align*}
If $\frac{3(\alpha-1)}{2}<\lambda<\frac{3\alpha+1}{2}$, we have
\begin{align*}
C_{k,2}(t)\les\sum_{ k_{2}\in\Z}2^{-2k_{2,+}}t^{-\frac{3}{2}}2^{(\lambda-\frac{3}{2}\alpha+\frac{3}{2})k_{2}}\les t^{-\frac{3}{2}}.
\end{align*}
{Then for all $\frac{\alpha-1}{2}<\lambda<\frac{1}{2}$ and $t\geq1$, we have
\begin{align*}
C_{k,2}(t)\les t^{-1-\delta},
\end{align*}
which, together with the estimate
\begin{align*}
\sup_{t>0}C_{k,2}(t)\leq \sum_{k_{2}\in\Z}2^{-2k_{2,+}}2^{(\lambda+\frac{3}{2})k_{2}}\les 1
\end{align*}
implies that for all $t>0$ and $k\in\Z$, \eqref{e4.6} holds.
}
\end{proof}

\subsection{$W_1$-estimate}

\begin{proposition}\label{prop4.7}
Suppose that $\eqref{e2.5}$ holds on $[0,\infty)$. Let $W_{1}$ be defined as in \eqref{e4.2s}. Then we have
\begin{align*}
\|w\|_{W_{1}}\les \|w_{0}\|_{\Hs}+\sum_{l=2}^{3}\|(w,u)\|_{W\times U}^{l}.
\end{align*}
\end{proposition}

\begin{lemma}\label{lem4.8}
Let $\{(W_{i},U_{i})\}_{i=1}^{3}$ be defined as in \eqref{e4.2s} and \eqref{e4.3s}. For all $w$ and $v$, we have
\EQN{
\|(w\bar{v})_{HH+HL}(t)\|_{\Hs}\les (1+t)^{-1-\delta}\left(\|w\|_{W_{1}}\|v\|_{U_{2}}+\|w\|_{W_{2}}\|v\|_{U_{1}}\right)
}
holds for arbitrary $t\geq0$.
\end{lemma}

\begin{proof}
By Lemma \ref{lem3.3}, we have 
\EQN{
\|(w\bar{v})_{LH}(t)\|_{\Hs}&\les \big(\sum_{k\in\Z}\|((w\bar{v})_{LH})_{k}(t)\|_{\Hs}^{2}\big)^{1/2}\les \big(\sum_{k\in\Z}\sum_{|k_{1}-k|\leq2}\|w_{\leq k_{1}-10}(t)\bar{v}_{k_{1}}(t)\|_{\Hs}^{2}\big)^{1/2}\\
&\les\|w(t)\|_{\infty}\big(\sum_{k\in\Z}\sum_{|k_{1}-k|\leq2}\|v_{k_{1}}(t)\|_{\Hs}^{2}\big)^{1/2}\leq \|w(t)\|_{\infty}\|v(t)\|_{\Hs},
}
which, together with \eqref{e2.1} and the estimate
\EQN{
\|(w\bar{v})(t)\|_{\Hs}\les \|w(t)\|_{\Hs}\|v(t)\|_{\infty}+\|v(t)\|_{\Hs}\|w(t)\|_{\infty},
}
finishes the proof of the lemma.
\end{proof}

\begin{proof}[Proof of Proposition \ref{prop4.7}]
Let $t>0$ and $s\in[0,t]$. By Lemma \ref{lem4.8} and \eqref{e4.8a}, we have
\EQN{
&\|(w\bar{w})_{HH+HL}(s)\|_{\Hs}+\|[w \overline{B(u,\bar u)}]_{HH+HL}(s)\|_{\Hs}+\|[B(u,\bar u)\bar u]_{HH+HL}(s)\|_{\Hs}\\
&\les(1+s)^{-1-\delta}\sum_{l=2}^{3}\|(w,u)\|_{W\times U}^{l}.
}
Following an approach analogous to that used in the proof of \eqref{e4.8s}, we have 
\EQN{
\|B(|u|^2,\bar u)_{k}(s)\|_{2}\les 2^{-\alpha k}\|u(s)\|_{\infty}^{2}\sup_{|k_{1}-k|\leq 2}\|u_{k_{1}}(s)\|_{2},
}
\EQN{
\|B(|u|^2,\bar u)_{k}(s)\|_{2}&\les 2^{(\frac{3}{2}-\alpha) k}\|(u\bar{u})(s)\|_{2}\sup_{|k_{1}-k|\leq 2}\|u_{k_{1}}(s)\|_{2}\\
&\les 2^{(\frac{3}{2}-\alpha) k}\|u(s)\|_{2}\|u(s)\|_{\infty}\sup_{|k_{1}-k|\leq 2}\|u_{k_{1}}(s)\|_{2},
}
which, mean
\EQN{
\|B(|u|^2,\bar u)_{k}(s)\|_{2}\les \min\{2^{-\alpha k-2k_{+}}
,2^{(\lambda+\frac{3}{2}-\alpha) k-2k_{+}}\}(1+s)^{-1-\delta}\|u\|_{U}^{3}.
}
By \eqref{e4.6s} and \eqref{e5.13A}, we have
\EQN{
\|B(u,|u|^2)_{k}(s)\|_{2}&\les 2^{-\alpha k}\sup_{|k_{1}-k|\leq 2}\|u_{\leq k_{1}-10}(s)\|_{\infty}\|(|u|^{2})_{k_{1}}(s)\|_{2}\\
&\les \min\{2^{(\lambda-\alpha+\frac{3}{2})k},2^{-\alpha k}\}(1+s)^{-1-\delta}2^{-2k_{+}}\|u\|_{U}^{3}.
}
Then by \eqref{e4.1s}, we have
\EQN{
\|B(|u|^2,\bar u)(s)\|_{\Hs}\les \sum_{k\in\Z}2^{2k_{+}}\|B(|u|^2,\bar u)_{k}(s)\|_{2}\les(1+s)^{-1-\delta}\|u\|_{U}^{3},
}
\EQN{
\|B(u,|u|^2)(s)\|_{\Hs}\les \sum_{k\in\Z}2^{2k_{+}}\|B(u,|u|^2)_{k}(s)\|_{2}\les(1+s)^{-1-\delta}\|u\|_{U}^{3}.
}
With the above estimates at hand, we may invoke  \eqref{e2.5} combined with Duhamel's formula to derive
\EQN{
\|w(t)\|_{\Hs}&\leq \|w_{0}\|_{\Hs}+ \int_{0}^{t}\bigg(\|(w\bar{w})_{HH+HL}(s)\|_{\Hs}+\|[w \overline{B(u,\bar u)}]_{HH+HL}(s)\|_{\Hs}\\
&+\|[B(u,\bar u)\bar u]_{HH+HL}(s)\|_{\Hs}+\|B(|u|^2,\bar u)(s)\|_{\Hs}+\|B(u,|u|^2)(s)\|_{\Hs}\bigg)ds\\
&\les \|w_{0}\|_{\Hs}+\sum_{l=2}^{3}\|(w,u)\|_{W\times U}^{l}.
}
\end{proof}

\subsection{$W_2$-estimate}

\begin{proposition}\label{prop4.9}
Let $\{W_{i}\}_{i=2}^{3}$ be defined as in \eqref{e4.2s}. Then for all $w\in \Wthr$, we have
\begin{align*}
\|w\|_{\Wtwo}\les
\|w\|_{\Wthr}.
\end{align*}
\end{proposition}

\begin{proof}
Let $f$ be the profile of $w$. By Lemmas \ref{lem3.2} and \ref{lem4.6}, for all $t>0$, we have
\begin{align*}
\|w(t)\|_{\infty} \leq &\sum_{k\in \Z}\|e^{-itD^{\alpha}}f_{k}(t)\|_{\infty}\\
\les&\sum_{k\in \Z}\min\{2^{(\lambda
+\frac{3}{2})k-2k_{+}},t^{-\frac{3}{2}}2^{(\lambda-\frac{3}{2}\alpha+\frac{3}{2})k-2k_{+}}\}\|f(t)\|_{\PW}\\
\les& (1+t)^{-1-\delta}\|f(t)\|_{\PW},
\end{align*}
and for $t=0$, we have
\begin{align*}
\|w_{0}\|_{\infty} 
\les\sum_{k\in \Z}2^{(\lambda
+\frac{3}{2})k-2k_{+}}\|f(0)\|_{\PW}
\les \|f(0)\|_{\PW}.
\end{align*}
These imply
\begin{align*}
\|w\|_{\Wtwo}
\les\sup_{t\geq0}\|f(t)\|_{\PW}=\|w\|_{\Wthr}.
\end{align*}
\end{proof}

\subsection{$W_3$-estimate}

\begin{proposition}\label{prop4.10s}
Under the assumption of Proposition \ref{prop4.1}, we have 
\begin{align*}
\|w\|_{W_{3}}\les \|w_{0}\|_{\PW}+\sum_{l=2}^{4}\|(w,u)\|_{W\times U}^{l},
\end{align*}
where  $W_{3}$ is defined as in \eqref{e4.2s}.
\end{proposition}

Recall that $f(t)=e^{itD^{\alpha}}w(t)$ and $\{\chi_{k}^{\ell}\}_{\ell=1}^{2}$ is defined by \eqref{chi}.
Then we have
\EQ{\label{e4.7}
e^{it|\xi|^{\alpha}}(w\bar{w})_{HH+HL}^\wedge(\xi)=&\sum_{\substack{k_{2}-k_{1}<10\\
k_{1},k_{2}\in \Z}}\int_{\R^3} e^{it\phi(\xi,\eta)}\hat{f}_{k_{1}}(t,\xi-\eta)\hat{\bar{f}}_{k_{2}}(t,\eta)d\eta,
}
where $\phi(\xi,\eta)=|\xi|^{\alpha}-|\xi-\eta|^{\alpha}+|\eta|^{\alpha}$.
Consequently,
\EQ{\label{e4.23}
\psi_{k}(\xi)e^{it|\xi|^{\alpha}}(w\bar{w})_{HH+HL}^\wedge(\xi)=
\sum_{(k_{1},k_{2})\in \chi_{k}^{1}\cup\chi_{k}^{2}}H_{k,k_{1},k_{2}}(t,\xi),
}
where
\begin{align*}
H_{k,k_{1},k_{2}}(t,\xi):=\psi_{k}(\xi)\int_{\R^3} e^{it\phi(\xi,\eta)}\hat{f}_{k_{1}}(t,\xi-\eta)\hat{\bar{f}}_{k_{2}}(t,\eta)d\eta.
\end{align*}

Taking the partial derivative $\partial_{\xi_{i}}$ of both sides of  \eqref{e4.7}, we have
\EQ{\label{e4.9}
\psi_{k}(\xi)\partial_{\xi_{i}}\big(e^{it|\xi|^{\alpha}}(w\bar{w})_{HH+HL}^\wedge(\xi)\big)=\sum_{\ell=1}^{2}\sum_{(k_{1},k_{2})\in \chi_{k}^{1}\cup\chi_{k}^{2}} I_{k,k_{1},k_{2}}^{(\ell)}(t,\xi),
}
where 
\begin{align*}
I_{k,k_{1},k_{2}}^{(1)}(t,\xi):=&\psi_{k}(\xi)\int_{\R^3}e^{it\phi(\xi,\eta)}\partial_{\xi_{i}}\hat{f}_{k_{1}}(t,\xi-\eta)\hat{\bar{f}}_{k_{2}}(t,\eta)d\eta,\\
I_{k,k_{1},k_{2}}^{(2)}(t,\xi):=&i\psi_{k}(\xi)\int_{\R^3} e^{it\phi(\xi,\eta)}t\partial_{\xi_{i}}\phi(\xi,\eta)\hat{f}_{k_{1}}(t,\xi-\eta)\hat{\bar{f}}_{k_{2}}(t,\eta)d\eta.
\end{align*}

Taking   $\partial_{\xi_{i},\xi_{j}}^{2}$ of both sides of \eqref{e4.7}, we have
\EQ{\label{e4.25}
\psi_{k}(\xi)\partial_{\xi_{i},\xi_{j}}^{2}\big(e^{it|\xi|^{\alpha}}(w\bar{w})_{HH+HL}^\wedge(\xi)\big)=\sum_{\ell=1}^{4}\sum_{(k_{1},k_{2})\in \chi_{k}^{1}\cup\chi_{k}^{2}}J_{k,k_{1},k_{2}}^{(\ell)}(t,\xi),
}
where {
\EQN{
J_{k,k_{1},k_{2}}^{(1)}(t,\xi):=&\psi_{k}(\xi)\int_{\R^3} e^{it\phi(\xi,\eta)}\partial_{\xi_{i},\xi_{j}}^{2}\hat{f}_{k_{1}}(t,\xi-\eta)\hat{\bar{f}}_{k_{2}}(t,\eta)d\eta,\\
J_{k,k_{1},k_{2}}^{(2)}(t,\xi):=&i\psi_{k}(\xi)\int_{\R^3} e^{it\phi(\xi,\eta)}t\big(\partial_{\xi_{i}}\phi(\xi,\eta)\partial_{\xi_{j}}+\partial_{\xi_{j}}\phi(\xi,\eta)\partial_{\xi_{i}}\big)\hat{f}_{k_{1}}(t,\xi-\eta)\hat{\bar{f}}_{k_{2}}(t,\eta)d\eta,\\
J_{k,k_{1},k_{2}}^{(3)}(t,\xi):=&-\psi_{k}(\xi)\int_{\R^3}e^{it\phi(\xi,\eta)}t^{2}\partial_{\xi_{i}}\phi(\xi,\eta)\partial_{\xi_{j}}\phi(\xi,\eta)\hat{f}_{k_{1}}(t,\xi-\eta)\hat{\bar{f}}_{k_{2}}(t,\eta)d\eta,\\
J_{k,k_{1},k_{2}}^{(4)}(t,\xi):=&i\psi_{k}(\xi)\int_{\R^3} e^{it\phi(\xi,\eta)}t\partial_{\xi_{i},\xi_{j}}^{2}\phi(\xi,\eta)\hat{f}_{k_{1}}(t,\xi-\eta)\hat{\bar{f}}_{k_{2}}(t,\eta)d\eta.
}
}

Define
\begin{align}\label{e4.26}
\hat{Q}_{1}(t,\xi):=\sum_{\substack{k_{2}-k_{1}<10\\
k_{1},k_{2}\in \Z}}\int_{0}^{t}\int_{\R^3}e^{is|\xi|^{\alpha}}\hat{w}_{k_{1}}(s,\xi-\eta)\hat{\bar{w}}_{k_{2}}(s,\eta)d\eta ds,
\end{align}

\begin{align}\label{e4.27}
\hat{Q}_{2}(t,\xi)
&:=\bigg(\sum_{\substack{|k_{1}-k_{2}|< 10\\
k_{1},k_{2}\in \Z}}+\sum_{\substack{k_{1}-k_{2}\geq10\\
k_{1},k_{2}\in \Z}}\bigg)\int_{0}^{t}\int_{\R^3}e^{is|\xi|^{\alpha}}\hat{w}_{k_{1}}(s,\xi-\eta)(\overline{B(u,\bar{u})})^\wedge_{k_{2}}(s,\eta)d\eta ds\nonumber\\ &=:{\sum_{\ell=1}^{2}\hat{Q}_{2,\ell}(t,\xi),}
\end{align}

\begin{align}\label{e5.8}
\hat{Q}_{3}(t,\xi)&:=\bigg(\sum_{\substack{|k_{1}-k_{2}|< 10\\
k_{1},k_{2}\in \Z}}+\sum_{\substack{k_{1}-k_{2}\geq10\\
k_{1},k_{2}\in \Z}}\bigg)\int_{0}^{t}\int_{\R^3}e^{is|\xi|^{\alpha}}B(u,\bar{u})^\wedge_{k_{1}}(s,\xi-\eta)\hat{\bar{u}}_{k_{2}}(s,\eta)d\eta ds\nonumber\\
&=:{\sum_{\ell=1}^{2}\hat{Q}_{3,\ell}(t,\xi),}
\end{align}

\begin{align}\label{e4.29}
\hat{Q}_{4}(t,\xi):=\int_{0}^{t}e^{is|\xi|^{\alpha}}B(|u|^{2},\bar{u})^\wedge(s,\xi)ds,
\end{align}

\begin{align}\label{e4.30}
\hat{Q}_{5}(t,\xi)&:=\int_{0}^{t}e^{is|\xi|^{\alpha}}B(u,|u|^{2})^\wedge(s,\xi)ds\nonumber=\sum_{X=HH,HL,LH}\int_{0}^{t}e^{is|\xi|^{\alpha}}B(u,(\bar{u}u)_{X})^\wedge(s,\xi)\,ds\\
&=:\sum_{\ell=1}^{3}\hat{Q}_{5,\ell}(t,\xi),
\end{align}
where $B(u,\bar{u})$ is defined by \eqref{e2.2}. 

The proofs of Lemmas \ref{lem4.11}, \ref{lem4.12} and \ref{lem4.13} follow a similar approach to that in \cite{W}. Here, we just provide a brief outline of the proof. Let
$\{\chi_{k}^{\ell}\}_{\ell=1}^{2}$ be defined by \eqref{chi}.

\begin{lemma}\label{lem4.11}
Let $t>0$ and $\Wthr$ be defined as in \eqref{e4.2s}. Define $\kappa:=\min\{k,k_{2}\}$. Then for all $(k_{1},k_{2})\in \cup_{\ell=1}^{2}\chi_{k}^{\ell}$ and $w\in \Wthr$, we have
\begin{align*}
\|H_{k,k_{1},k_{2}}(t)\|_{2}\les\min\{2^{\frac{3}{2}\kappa}2^{\lambda k_{1}-2k_{1,+}}2^{\lambda k_{2}-2k_{2,+}},t^{-\frac{3}{2}}2^{(\lambda-\frac{3}{2}\alpha+\frac{3}{2})k_{1}-2k_{1,+}}2^{\lambda k_{2}-2k_{2,+}}\}\|w\|_{\Wthr}^{2}.
\end{align*}
\end{lemma}

\begin{proof}
It is a consequence of Lemmas \ref{lem3.2}, \ref{lem3.3} and \ref{lem3.4}.
\end{proof}

\begin{lemma}\label{lem4.12}
{Let $t>0$ and $\Wthr$ be defined as in \eqref{e4.2s}.}

(i) If $(k_{1},k_{2})\in \chi_{k}^{1}$, { for all $w\in \Wthr$}, we have
\begin{align}\label{e4.31}
\sum_{\ell=1}^{2}\|I_{k,k_{1},k_{2}}^{(\ell)}(t)\|_{2}\les2^{-k-4k_{1,+}}\min\{2^{\frac{3}{2}k} 2^{2\lambda k_{1}},t^{-\frac{3}{2}}2^{(2\lambda-\frac{3}{2}\alpha+\frac{3}{2})k_{1}} \}\|w\|_{\Wthr}^{2};
\end{align}

(ii) If $(k_{1},k_{2})\in \chi_{k}^{2}$, for all $w\in \Wthr$, we have
\begin{align}\label{e4.32}
\sum_{\ell=1}^{2}\|I_{k,k_{1},k_{2}}^{(\ell)}(t)\|_{2}\les2^{(\lambda-1) k-2k_{+}-2k_{2,+}}\min\{2^{(\lambda+\frac{3}{2}) k_{2}},t^{-\frac{3}{2}}2^{(\lambda-\frac{3}{2}\alpha+\frac{3}{2})k_{2}}\}\|w\|_{\Wthr}^{2}.
\end{align}
\end{lemma}

\begin{proof}
Let $\{\rho_{l}^{(\ell)}\}_{l=1}^{3}$ be given by Lemma \ref{lem3.12aa}, where $\ell\in\{1,2\}$.

First, we consider $(k_{1},k_{2})\in \chi_{k}^{1}$.
By integration by parts, we have
\begin{align*}
I_{k,k_{1},k_{2}}^{(2)}(t,\xi)&=\psi_{k}(\xi)\sum_{l=1}^{3}\int_{\R^3}e^{it\phi(\xi,\eta)}\partial_{\eta_{l}}\bigg(\rho_{l}^{(1)}(\xi,\eta)\frac{\partial_{\xi_{i}}\phi(\xi,\eta)}{\partial_{\eta_{l}}\phi(\xi,\eta)}\hat{f}_{k_{1}}(t,\xi-\eta)\hat{\bar{f}}_{k_{2}}(t,\eta)\bigg)d\eta.
\end{align*}
By Lemmas \ref{lem3.2}, \ref{lem3.3}, \ref{lem3.4} and \ref{lem3.8}, we can derive  \eqref{e4.31},
where we use $(2,2)$ and $(2,\infty)$ estimates.

Next, we consider $(k_{1},k_{2})\in \chi_{k}^{2}$. Again, by integration by parts, we have
\begin{align*}
I_{k,k_{1},k_{2}}^{(2)}(t,\xi)&=\psi_{k}(\xi)\sum_{l=1}^{3}\bigg[\int_{\R^3} e^{it\phi(\xi,\eta)}\rho_{l}^{(2)}(\xi,\eta)\frac{\partial_{\xi_{i}}\phi(\xi,\eta)}{\partial_{\eta_{l}}\phi(\xi,\eta)}\hat{f}_{k_{1}}(t,\xi-\eta)\partial_{\eta_{l}}\hat{\bar{f}}_{k_{2}}(t,\eta)d\eta\\
&+\int_{\R^3} e^{it\phi(\xi,\eta)}\partial_{\eta_{l}}\bigg(\rho_{l}^{(2)}(\xi,\eta)\frac{\partial_{\xi_{i}}\phi(\xi,\eta)}{\partial_{\eta_{l}}\phi(\xi,\eta)}\hat{f}_{k_{1}}(t,\xi-\eta)\bigg)\hat{\bar{f}}_{k_{2}}(t,\eta)d\eta\bigg].
\end{align*}
By Lemmas \ref{lem3.2}, \ref{lem3.3}, \ref{lem3.4}, and \ref{lem3.9}, we obtain
\begin{align*}
\sum_{\ell=1}^{2}\|I_{k,k_{1},k_{2}}^{(\ell)}(t)\|_{2}&\les2^{-k}2^{\lambda k_{2}-2k_{2,+}}\min\{t^{-\frac{3}{2}}2^{(\lambda-\frac{3}{2}\alpha+\frac{3}{2})k_{1}-2k_{1,+}},2^{\frac{3}{2}k_{2}}2^{\lambda k_{1}-2k_{1,+}}\}\|w\|_{\Wthr}^{2}\\
&+2^{(\lambda-1) k_{1}-2k_{1,+}}\min\{t^{-\frac{3}{2}}2^{(\lambda-\frac{3}{2}\alpha+\frac{3}{2})k_{2}-2k_{2,+}},2^{\frac{3}{2}k_{2}}2^{\lambda k_{2}-2k_{2,+}}\}\|w\|_{\Wthr}^{2}\\
&\les2^{(\lambda-1) k-2k_{+}}\min\{t^{-\frac{3}{2}}2^{(\lambda-\frac{3}{2}\alpha+\frac{3}{2})k_{2}-2k_{2,+}},2^{(\lambda+\frac{3}{2}) k_{2}-2k_{2,+}}\}\|w\|_{\Wthr}^{2},
\end{align*}
where we apply the $(\infty,2)$ estimate to the first term of $I_{k,k_{1},k_{2}}^{(2)}(t)$ and $(2,\infty)$ estimates to the remaining terms. This finishes the proof of \eqref{e4.32}.
\end{proof}

\begin{lemma}\label{lem4.13}
Let $t>0$ and $\Wthr$ be given by \eqref{e4.2s}.

(i) If $(k_{1},k_{2})\in \chi_{k}^{1}$, for all $w\in \Wthr$, we have
\begin{align}\label{e4.33}
\sum_{\ell=1}^{4}\|J_{k,k_{1},k_{2}}^{(\ell)}(t)\|_{2}\les2^{-2k-4k_{1,+}}\min\{2^{\frac{3}{2}k}2^{2\lambda k_{1}},t^{-\frac{3}{2}}2^{(2\lambda-\frac{3}{2}\alpha+\frac{3}{2})k_{1}}\}\|w\|_{\Wthr}^{2};
\end{align}

(ii) If $(k_{1},k_{2})\in \chi_{k}^{2}$, for all $w\in \Wthr$, we have
\begin{align}\label{e4.34}
\sum_{\ell=1}^{4}\|J_{k,k_{1},k_{2}}^{(\ell)}(t)\|_{2}\les 2^{(\lambda-2)k-2k_{+}-2k_{2,+}}\min\{2^{(\lambda+\frac{3}{2}) k_{2}},t^{-\frac{3}{2}}2^{(\lambda-\frac{3}{2}\alpha+\frac{3}{2})k_{2}}\}\|w\|_{\Wthr}^{2}.
\end{align}
\end{lemma}

\begin{proof}
The proof is similar to that of  Lemma \ref{lem4.12}. {Let  $(k_{1},k_{2})\in \chi_{k}^{\ell}$ and $\ell\in\{1,2\}$. By integration by parts, we can write $J_{k,k_{1},k_{2}}^{(2)}(t,\xi)$ as the sum of following terms,
\EQN{
\psi_{k}(\xi)\sum_{l=1}^{3}\int_{\R^3} e^{it\phi(\xi,\eta)}\rho_{l}^{(\ell)}(\xi,\eta)\frac{\partial_{\xi_{i}}\phi(\xi,\eta)\partial_{\xi_{j}}+\partial_{\xi_{j}}\phi(\xi,\eta)\partial_{\xi_{i}}}{\partial_{\eta_{l}}\phi(\xi,\eta)}\hat{f}_{k_{1}}(t,\xi-\eta)\partial_{\eta_{l}}\hat{\bar{f}}_{k_{2}}(t,\eta)d\eta,
}
\begin{align*}
\psi_{k}(\xi)\sum_{l=1}^{3}\int_{\R^3}e^{it\phi(\xi,\eta)}\partial_{\eta_{l}}\bigg(\rho_{l}^{(\ell)}(\xi,\eta)\frac{\partial_{\xi_{i}}\phi(\xi,\eta)\partial_{\xi_{j}}+\partial_{\xi_{j}}\phi(\xi,\eta)\partial_{\xi_{i}}}{\partial_{\eta_{l}}\phi(\xi,\eta)}\hat{f}_{k_{1}}(t,\xi-\eta)\bigg)\hat{\bar{f}}_{k_{2}}(t,\eta)d\eta,
\end{align*}
and we can write $J_{k,k_{1},k_{2}}^{(3)}(t,\xi)$ as the sum of
\EQN{
\psi_{k}(\xi)\sum_{l=1}^{3}\int_{\R^3}e^{it\phi(\xi,\eta)}\partial_{\eta_{l}}\bigg(\frac{1}{\partial_{\eta_{l}}\phi(\xi,\eta)}\partial_{\eta_{l}}\bigg(\big(\rho_{l}^{(\ell)}\frac{\partial_{\xi_{i}}\phi\partial_{\xi_{j}}\phi}{\partial_{\eta_{l}}\phi}\big)(\xi,\eta)\hat{f}_{k_{1}}(t,\xi-\eta)\bigg)\bigg)\hat{\bar{f}}_{k_{2}}(t,\eta)d\eta,
}
\EQN{
\psi_{k}(\xi)\sum_{l=1}^{3}\int_{\R^3}e^{it\phi(\xi,\eta)}\frac{1}{\partial_{\eta_{l}}\phi(\xi,\eta)}\partial_{\eta_{l}}\bigg(\big(\rho_{l}^{(\ell)}\frac{\partial_{\xi_{i}}\phi\partial_{\xi_{j}}\phi}{\partial_{\eta_{l}}\phi}\big)(\xi,\eta)\hat{f}_{k_{1}}(t,\xi-\eta)\bigg)\partial_{\eta_{l}}\hat{\bar{f}}_{k_{2}}(t,\eta)d\eta,
}
\EQN{
\psi_{k}(\xi)\sum_{l=1}^{3}\int_{\R^3}e^{it\phi(\xi,\eta)}\partial_{\eta_{l}}\bigg(\big(\rho_{l}^{(\ell)}\frac{\partial_{\xi_{i}}\phi\partial_{\xi_{j}}\phi}{(\partial_{\eta_{l}}\phi)^{2}}\big)(\xi,\eta)\hat{f}_{k_{1}}(t,\xi-\eta)\bigg)\partial_{\eta_{l}}\hat{\bar{f}}_{k_{2}}(t,\eta)d\eta,
}
\begin{align*}
\psi_{k}(\xi)\sum_{l=1}^{3}\int_{\R^3} e^{it\phi(\xi,\eta)}\big(\rho_{l}^{(\ell)}\frac{\partial_{\xi_{i}}\phi\partial_{\xi_{j}}\phi}{(\partial_{\eta_{l}}\phi)^{2}}\big)(\xi,\eta)\hat{f}_{k_{1}}(t,\xi-\eta)\partial_{\eta_{l}}^{2}\hat{\bar{f}}_{k_{2}}(t,\eta)d\eta.
\end{align*}
Moreover, we have
\begin{align*}
J_{k,k_{1},k_{2}}^{(4)}(t,\xi)
&=\psi_{k}(\xi)\sum_{l=1}^{3}\int_{\R^3} e^{it\phi(\xi,\eta)}\partial_{\eta_{l}}\bigg(\big(\rho_{l}^{(\ell)}\frac{\partial_{\xi_{i},\xi_{j}}^{2}\phi}{\partial_{\eta_{l}}\phi}\big)(\xi,\eta)\hat{f}_{k_{1}}(t,\xi-\eta)\hat{\bar{f}}_{k_{2}}(t,\eta)\bigg)d\eta.
\end{align*}
The proof of} \eqref{e4.33} and \eqref{e4.34} relies on Lemmas \ref{lem3.2}, \ref{lem3.3}, \ref{lem3.4}, \ref{lem3.8}, and \ref{lem3.9},
where we use $(4,4)$ estimates to the terms with $\nabla \hat{f}_{k_{1}}$ and $\nabla \hat{\bar{f}}_{k_{2}}$.
\end{proof}

\begin{lemma}\label{lem4.14}
Let $t>0$ and $\Wthr$ be given by \eqref{e4.2s}. Then there exists $\varepsilon=\varepsilon(\alpha,\lambda)>0$ such that for all $w\in W_{3}$, we have 
\begin{align*}
&2^{-\lambda k+2k_{+}}\sum_{(k_{1},k_{2})\in \chi_{k}^{1}\cup\chi_{k}^{2}}\int_{\frac{t}{2}}^{t}\|H_{k,k_{1},k_{2}}(s)\|_{2}ds\\
&+\sum_{\ell=1}^{2}2^{(1-\lambda) k+2k_{+}}\sum_{(k_{1},k_{2})\in \chi_{k}^{1}\cup\chi_{k}^{2}}\int_{\frac{t}{2}}^{t}\|I_{k,k_{1},k_{2}}^{(\ell)}(s)\|_{2}ds\\
&+\sum_{\ell=1}^{4}2^{(2-\lambda) k+2k_{+}}\sum_{(k_{1},k_{2})\in \chi_{k}^{1}\cup\chi_{k}^{2}}\int_{\frac{t}{2}}^{t}\|J_{k,k_{1},k_{2}}^{(\ell)}(s)\|_{2}ds\\
&\les (1+t)^{-\varepsilon}\|w\|_{\Wthr}^{2}.
\end{align*}
\end{lemma}

\begin{proof}
Let $\delta$ be given by Theorem \ref{thm:main}. By Lemmas \ref{lem4.11}, \ref{lem4.12}, \ref{lem4.13}, and \eqref{e4.5}, {there exists 
$\varepsilon\in(0,\delta)$ such that} the sum for $(k_{1},k_{2})\in \chi_{k}^{1}$ can be bounded by
\begin{align*}
t\sup_{s\in[\frac{t}{2},t]}C_{k,1}(s)\|w\|_{\Wthr}^{2}\les (1+t)^{-\varepsilon}\|w\|_{\Wthr}^{2}.
\end{align*}
By Lemmas \ref{lem4.11}, \ref{lem4.12}, \ref{lem4.13}, and \eqref{e4.6}, the sum for $(k_{1},k_{2})\in \chi_{k}^{2}$ can be bounded by
\begin{align*}
t\sup_{s\in[\frac{t}{2},t]}C_{k,2}(s)\|w\|_{\Wthr}^{2}
\les (1+t)^{-\delta}\|w\|_{\Wthr}^{2}.
\end{align*}
We have completed the proof of the lemma.
\end{proof}

\begin{lemma}\label{lem4.15}
Let $Q_{1}$ be defined as in \eqref{e4.26} and $W_{3}$ be defined as in \eqref{e4.2s}. Then we have
\begin{align*}
\sup_{t>0}\|Q_{1}(t)\|_{\PW}\les \|w\|_{W_{3}}^{2}.
\end{align*}
\end{lemma}

\begin{proof}
By \eqref{e4.7}, \eqref{e4.23}, \eqref{e4.9} and \eqref{e4.25}, the proof of Lemma \ref{lem4.15} follows from Lemma \ref{lem4.14}.
\end{proof}

\begin{lemma}\label{lem4.16}
Assume that \eqref{e1.1} and \eqref{e2.4} hold on $[0,\infty)$. Let $Q_{2}$ be defined as in \eqref{e4.27} and $\{(W_{i},U_{i})\}_{i=2}^{3}$ be defined by \eqref{e4.2s}, \eqref{e4.3s}. Then we have
\begin{align*}
\sup_{t>0}\|Q_{2}(t)\|_{\PW}\les \sum_{i=2}^{3}\|(w,u)\|_{W_{i}\times U_{i}}^{3}+\|(w,u)\|_{W_{i}\times U_{i}}^{4}.
\end{align*}
\end{lemma}

\begin{proof}
Without loss of generality, assume that $\sum_{i=2}^{3}\|(w,u)\|_{W_{i}\times U_{i}}^{3}+\|(w,u)\|_{W_{i}\times U_{i}}^{4}=1$. By \eqref{e4.27}, it suffices to estimate $\{\|Q_{2,\ell}(t)\|_{\PW}\}_{\ell=1}^{2}$. Let $(f,g)$ be the profiles of $(w,u)$. 

{\bf Case 1:} Estimates for $Q_{2,1}$.  By \eqref{e4.27}, we have
\begin{align}\label{e4.35}
\hat{Q}_{2,1}(t,\xi)
&=\sum_{\substack{|k_{1}-k_{2}|<10\\
k_{1},k_{2}\in \Z}}\int_{0}^{t}\int_{\R^3}e^{is\phi(\xi,\eta)}\hat{f}_{k_{1}}(s,\xi-\eta)(\overline{e^{isD^{\alpha}}B(u,\bar{u})})^\wedge_{k_{2}}(s,\eta)d\eta ds,
\end{align}
where $\phi$ is given by \eqref{e2.3}.
By \eqref{e4.35}, we can write $\psi_{k}(\xi)\partial_{\xi_{j}}\hat{Q}_{2,1}(t,\xi)$ as
\begin{align*}
&\psi_{k}(\xi)\sum_{(k_{1},k_{2})\in \chi_{k}^{1}}\bigg(i\int_{0}^{t}\int_{\R^3} e^{is\phi(\xi,\eta)}s\partial_{\xi_{j}}\phi(\xi,\eta)\hat{f}_{k_{1}}(s,\xi-\eta)(\overline{e^{isD^{\alpha}}B(u,\bar{u})})^\wedge_{k_{2}}(s,\eta)d\eta ds\\
&\hskip 2cm +\int_{0}^{t}\int_{\R^3} e^{is\phi(\xi,\eta)}\partial_{\xi_{j}}\hat{f}_{k_{1}}(s,\xi-\eta)(\overline{e^{isD^{\alpha}}B(u,\bar{u})})^\wedge_{k_{2}}(s,\eta)d\eta ds\bigg).
\end{align*}
Applying $\partial_{\xi_{i},\xi_{j}}^{2}$ to \eqref{e4.35}, we can write $\psi_{k}(\xi)\partial_{\xi_{i},\xi_{j}}^{2}\hat{Q}_{2,1}(t,\xi)$ as
\begin{align*}
&-\psi_{k}(\xi)\sum_{\substack{(k_{1},k_{2})\in\chi_{k}^{1}\\k_{1}> k+12}}\int_{0}^{t}\int_{\R^3}e^{is\phi(\xi,\eta)}s^{2}\aij(\xi,\eta)\hat{g}_{k_{1}}(s,\xi-\eta)(\overline{e^{isD^{\alpha}}B(u,\bar{u})})^\wedge_{k_{2}}(s,\eta)d\eta ds\\
&-\psi_{k}(\xi)\sum_{\substack{(k_{1},k_{2})\in\chi_{k}^{1}\\|k_{1}-k|\leq 12}}\int_{0}^{t}\int_{\R^3}e^{is\phi(\xi,\eta)}s^{2}\aij(\xi,\eta)\hat{g}_{k_{1}}(s,\xi-\eta)(\overline{e^{isD^{\alpha}}B(u,\bar{u})})^\wedge_{k_{2}}(s,\eta)d\eta ds\\
&-\psi_{k}(\xi)\sum_{(k_{1},k_{2})\in\chi_{k}^{1}}\int_{0}^{t}\int_{\R^3} e^{is\phi(\xi,\eta)}\bigg[is^{2}\aij(\xi,\eta) B(u,\bar{u})^\wedge_{k_{1}}(s,\xi-\eta)(\overline{e^{isD^{\alpha}}B(u,\bar{u})})^\wedge_{k_{2}}(s,\eta)\\
&\qquad \qquad \qquad \qquad -\partial_{\xi_{i},\xi_{j}}^{2}\hat{f}_{k_{1}}(s,\xi-\eta)(\overline{e^{isD^{\alpha}}B(u,\bar{u})})^\wedge_{k_{2}}(s,\eta)\\
&\qquad \qquad \qquad \qquad -is\Gammaone\big(\hat{f}_{k_{1}}(s,\xi-\eta)\big)(\overline{e^{isD^{\alpha}}B(u,\bar{u})})^\wedge_{k_{2}}(s,\eta)\bigg]d\eta ds\\
&=:\sum_{\ell=1}^{3}\psi_{k}(\xi)\MJ_{2,1}^{(\ell)}(t,\xi),
\end{align*}
where
\EQ{\label{e4.36}
\aij(\xi,\eta):=\partial_{\xi_{i}}\phi(\xi,\eta)\partial_{\xi_{j}}\phi(\xi,\eta)
}
and $\Gammaone $ is an operator  defined by
\EQ{\label{e4.37}
\Gammaone:=\partial_{\xi_{i}}\phi(\xi,\eta)\partial_{\xi_{j}}+\partial_{\xi_{j}}\phi(\xi,\eta)\partial_{\xi_{i}}+\partial_{\xi_{i},\xi_{j}}^{2}\phi(\xi,\eta).
}
By \eqref{e2.2} and integration by parts in $s$, we can write $\psi_{k}(\xi)\MJ_{2,1}^{(1)}(t,\xi)$ as
\begin{align*}
&-\psi_{k}(\xi)\sum_{\substack{|k_{2}-k_{1}|< 10\\k_{1}> k+12\\
(k_{3},k_{4})\in\chi_{k_{2}}^{3}}}\int_{0}^{t}\int_{\R^3}e^{is\Phione(\xi,\eta,\zeta)}s^{2} \aktwo(\xi,\eta,\zeta)\hat{g}_{k_{1}}(s,\xi-\eta-\zeta)\hat{\bar{g}}_{k_{3}}(s,\eta)\hat{g}_{k_{4}}(s,\zeta)d\eta d\zeta ds\\
&=i\psi_{k}(\xi)\sum_{\substack{|k_{2}-k_{1}|< 10\\k_{1}> k+12\\
(k_{3},k_{4})\in\chi_{k_{2}}^{3}}}\bigg[\int_{\R^3}e^{it\Phione(\xi,\eta,\zeta)}t^{2}\frac{\aktwo(\xi,\eta,\zeta)}{\Phione(\xi,\eta,\zeta)}\hat{g}_{k_{1}}(t,\xi-\eta-\zeta)\hat{\bar{g}}_{k_{3}}(t,\eta)\hat{g}_{k_{4}}(t,\zeta)d\eta d\zeta\\
&-\int_{0}^{t}\int_{\R^3} e^{is\Phione(\xi,\eta,\zeta)}\partial_{s}\bigg(s^{2}\frac{\aktwo(\xi,\eta,\zeta)}{\Phione(\xi,\eta,\zeta)}\hat{g}_{k_{1}}(s,\xi-\eta-\zeta)\hat{\bar{g}}_{k_{3}}(s,\eta)\hat{g}_{k_{4}}(s,\zeta)\bigg)d\eta d\zeta ds\bigg],
\end{align*}
where
\EQ{\label{e4.38}
\Phione(\xi,\eta,\zeta):=|\xi|^{\alpha}-|\xi-\eta-\zeta|^{\alpha}+|\eta|^{\alpha}-|\zeta|^{\alpha},
}
\EQ{\label{e4.39}
\aktwo(\xi,\eta,\zeta):=\frac{\partial_{\xi_{i}}\Phione(\xi,\eta,\zeta)\partial_{\xi_{j}}\Phione(\xi,\eta,\zeta)}{\phi(\eta+\zeta,\zeta)}\psi_{k_{2}}(\eta+\zeta).
}
By Lemma \ref{lem3.8} and some calculations, we have $$\partial_{\xi_{i}}\Phione\in 2^{(\alpha-1)k_{1}}S^{0}(\Omega^{(1)}_{k,k_{1},k_{2},k_{3},k_{4}})
$$ 
for all $i$ and
$\Phione\sim 2^{\alpha k_{1}}$ if the integrands do not vanish. Here $\Omega^{(1)}_{k,k_{1},k_{2},k_{3},k_{4}}$ denotes
\EQ{\label{omega1}
\{(\xi,\eta,\zeta)\in\R^{9}: 2^{-k}|\xi|, 2^{-k_{1}}|\xi-\eta-\zeta|, 2^{-k_{2}}|\eta+\zeta|, 2^{-k_{3}}|\eta|, 2^{-k_{4}}|\zeta|\in (1/2,2)\},
}
and  $ \tau S^{0}(\Omega) $ has been defined in  Definition \ref{def3.7}.
By \eqref{e2.2}, we can write $\psi_{k}(\xi)\MJ_{2,1}^{(2)}(t,\xi)$ as
\begin{align*}
&-\psi_{k}(\xi)\sum_{\substack{|k_{2}-k_{1}|<10\\|k_{1}-k|\leq 12\\(k_{3},k_{4})\in \chi_{k_{2}}^{(3)}}}\int_{0}^{t}\int_{\R^3}e^{is\Phione(\xi,\eta,\zeta)}s^{2} \aktwo(\xi,\eta,\zeta)\hat{g}_{k_{1}}(s,\xi-\eta-\zeta)\hat{\bar{g}}_{k_{3}}(s,\eta)\hat{g}_{k_{4}}(s,\zeta)d\eta d\zeta ds.
\end{align*}
We now have that $\partial_{\xi_{i}}\Phione,\partial_{\zeta_{i}}\Phione\in 2^{(\alpha-1)k}S^{0}(\Omega^{(1)}_{k,k_{1},k_{2},k_{3},k_{4}})$ for all $i$ and
$|\nabla_{\zeta}\Phione|\sim 2^{(\alpha-1)k}$
on $\Omega^{(1)}_{k,k_{1},k_{2},k_{3},k_{4}}$ by similar calculations in Case 1 of Lemma \ref{lem3.8}.
By Lemmas \ref{lem3.4}, \ref{lem3.8}, \eqref{e4.8s}, \eqref{e4.9s} and \eqref{e4.1s}, we have
\begin{align*}
\|\psi_{k}\hat{Q}_{2,1}(t)\|_{2}&\les \sup_{|l|\leq 10}\int_{0}^{t}\sum_{k_{1}\geq k-12}2^{\frac{3}{2}k}\|f_{k_{1}}(s)\|_{2}\|B(u,\bar{u})_{k_{1}+l}(s)\|_{2}ds\\
&\les \int_{0}^{t}(1+s)^{-1-\delta}ds\sum_{k_{1}\geq k-12}2^{\frac{3}{2}k}2^{(2\lambda-\alpha)k_{1}}2^{-4k_{1,+}}\\
&\les 2^{\lambda k-2k_{+}}2^{(\lambda-\alpha+\frac{3}{2})k}2^{-2k_{+}}\les 2^{\lambda k-2k_{+}},
\end{align*}
\begin{align*}
\|\psi_{k}\partial_{\xi_{j}}\hat{Q}_{2,1}(t)\|_{2}&\les \sup_{|l|\leq10}\sum_{k_{1}\geq k-12}\int_{0}^{t}\big(2^{-k_{1}}\|\hat{f}_{k_{1}}(s)\|_{2}+\|\nabla\hat{f}_{k_{1}}(s)\|_{2}\big)\cdot\\
&\quad\big(s2^{\alpha k_{1}}\|B(u,\bar{u})_{k_{1}+l}(s)\|_{\infty}+2^{\frac{3}{2}k}\|B(u,\bar{u})_{k_{1}+l}(s)\|_{2}\big)ds\\
&\les 2^{(\lambda-1)k-2k_{+}}.
\end{align*}
By Lemma \ref{lem3.5}, \eqref{e5.27A} and \eqref{e5.27AA}, we have
\begin{align*}
\|\psi_{k}\MJ_{2,1}^{(1)}(t)\|_{2}&\les \sum_{k_{1}\geq k-12}\bigg(t^{2}2^{-2k_{1}}\|g_{k_{1}}(t)\|_{2}\sum_{j\in\Z}\|u_{j}(t)\|_{\infty}\|u(t)\|_{\infty}\\
&\qquad\qquad+\int_{0}^{t}s2^{-2k_{1}}\|g_{k_{1}}(s)\|_{2}\sum_{j\in\Z}\|u_{j}(s)\|_{\infty}\|u(s)\|_{\infty}ds\\
&\quad\quad+\int_{0}^{t}s^{2}2^{-2k_{1}}\sup_{|l|\leq 22}\|g_{k_{1}+l}(s)\|_{2}{\sum_{j\in\Z}\|e^{-isD^{\alpha}}\partial_{s}g_{j}(s)\|_{\infty}}\sum_{j\in\Z}\|u_{j}(s)\|_{\infty}ds\bigg)\\
&\les\sum_{k_{1}\geq k-12}2^{(\lambda-2)k_{1}-2k_{1,+}}\les2^{(\lambda-2)k-2k_{+}}.
\end{align*}
We can apply Lemma \ref{lem4.2} together with integration by parts in $\zeta$ to estimate $\psi_{k}\MJ_{2,1}^{(2)}(t)$.
By $(2,\infty)$ estimates, Lemmas  \ref{lem3.8} and \ref{lem4.3}, we can obtain the estimate of $\psi_{k}\MJ_{2,1}^{(3)}(t)$ and then we have
\begin{align*}
\|\psi_{k}\partial^{2}_{\xi_{i},\xi_{j}}\hat{Q}_{2,1}(t)\|_{2}\leq\sum_{\ell=1}^{3}\|\psi_{k}\MJ_{2,1}^{(\ell)}(t)\|_{2}\les 2^{(\lambda-2)k-2k_{+}}.
\end{align*}

\medskip

{\bf Case 2:} Estimates for $Q_{2,2}$. By \eqref{e4.27}, we have
\begin{align}\label{e4.41}
\hat{Q}_{2,2}(t,\xi)=\sum_{\substack{k_{1}-k_{2}\geq10\\
k_{1},k_{2}\in \Z}}\psi_{k}(\xi)\int_{0}^{t}\int_{\R^3} e^{is\phi(\xi,\eta)}\hat{f}_{k_{1}}(s,\xi-\eta)(\overline{e^{isD^{\alpha}}B(u,\bar{u})})^\wedge_{k_{2}}(s,\eta)d\eta ds,
\end{align}
where $\phi$ is given by \eqref{e2.3}.
Let $\{\rho_{l}^{(2)}\}_{l=1}^{3}$ be given by Lemma \ref{lem3.12aa}.
By integration by parts, we can write $\psi_{k}(\xi)\partial_{\xi_{j}}\hat{Q}_{2,2}(t,\xi)$ as
\begin{align*}
&\psi_{k}(\xi)\sum_{(k_{1},k_{2})\in \chi_{k}^{2}}\bigg[\int_{0}^{t}\int_{\R^3} e^{is\phi(\xi,\eta)}\partial_{\xi_{j}}\hat{f}_{k_{1}}(s,\xi-\eta)(\overline{e^{isD^{\alpha}}B(u,\bar{u})})^\wedge_{k_{2}}(s,\eta)d\eta ds\\
&-\sum_{l=1}^{3}\int_{0}^{t}\int_{\R^3} e^{is\phi(\xi,\eta)}\partial_{\eta_{l}}\bigg(\big(\rho_{l}^{(2)}\frac{\partial_{\xi_{j}}\phi}{\partial_{\eta_{l}}\phi}\big)(\xi,\eta)\hat{f}_{k_{1}}(s,\xi-\eta)\bigg)(\overline{e^{isD^{\alpha}}B(u,\bar{u})})^\wedge_{k_{2}}(s,\eta)d\eta ds\\
&-\sum_{l=1}^{3}\int_{0}^{t}\int_{\R^3} e^{is\phi(\xi,\eta)}\big(\rho_{l}^{(2)}\frac{\partial_{\xi_{j}}\phi}{\partial_{\eta_{l}}\phi}\big)(\xi,\eta)\hat{f}_{k_{1}}(s,\xi-\eta)\partial_{\eta_{l}}(\overline{e^{isD^{\alpha}}B(u,\bar{u})})^\wedge_{k_{2}}(s,\eta)d\eta ds\bigg]\\
&=:\sum_{\ell=1}^{3}\psi_{k}(\xi)\MI_{2,2}^{(\ell)}(t,\xi).
\end{align*}
Applying $\partial_{\xi_{i},\xi_{j}}^{2}$ to \eqref{e4.41}, we can write $\psi_{k}(\xi)\partial_{\xi_{i},\xi_{j}}^{2}\hat{Q}_{2,2}(t,\xi)$ as
\begin{align*}
&\psi_{k}(\xi)\sum_{(k_{1},k_{2})\in \chi_{k}^{2}}\bigg(\int_{0}^{t}\int_{\R^3} e^{is\phi(\xi,\eta)}\partial_{\xi_{i},\xi_{j}}^{2}\hat{f}_{k_{1}}(s,\xi-\eta)(\overline{e^{isD^{\alpha}}B(u,\bar{u})})^\wedge_{k_{2}}(s,\eta)d\eta ds\\
&\qquad+i\int_{0}^{t}\int_{\R^3}e^{is\phi(\xi,\eta)}s\Gammaone\big(\hat{f}_{k_{1}}(s,\xi-\eta)\big)(\overline{e^{isD^{\alpha}}B(u,\bar{u})})^\wedge_{k_{2}}(s,\eta)d\eta ds\\
&\qquad-\int_{0}^{t}\int_{\R^3} e^{is\phi(\xi,\eta)}s^{2}\aij(\xi,\eta)\hat{f}_{k_{1}}(s,\xi-\eta)(\overline{e^{isD^{\alpha}}B(u,\bar{u})})^\wedge_{k_{2}}(s,\eta)d\eta ds\bigg)\\
&=:\sum_{\ell=1}^{3}\psi_{k}(\xi)\MJ_{2,2}^{(\ell)}(t,\xi),
\end{align*}
where $\aij$ and $\Gammaone$ are defined as in \eqref{e4.36} and \eqref{e4.37}.
Let $\{\rho_{l}^{(3)}\}_{l=1}^{3}$ be given by Lemma \ref{lem3.12aa}. By a change of variable and integration by parts, we can write $\psi_{k}(\xi)\MJ_{2,2}^{(3)}(t,\xi)$ as
\begin{align*}
&\sum_{l=1}^{3}\sum_{\substack{(k_{1},k_{2})\in \chi_{k}^{2}\\
(k_{3},k_{4})\in \chi_{k_{2}}^{3}}}
\psi_{k}(\xi)\int_{0}^{t}\int_{\R^6} e^{is\Phione(\xi,\eta)}s^{2}\aktwo(\xi,\eta,\zeta)
(\rho_{l}^{(3)}\hat{f}_{k_{1}})(s,\xi-\eta-\zeta)\hat{\bar{g}}_{k_{3}}(s,\eta)\hat{g}_{k_{4}}(s,\zeta)d\eta d\zeta ds\\
&=\psi_{k}(\xi)\sum_{l=1}^{3}\sum_{\substack{(k_{1},k_{2})\in \chi_{k}^{2}\\
(k_{3},k_{4})\in \chi_{k_{2}}^{3}}}\int_{0}^{t}\!\!\int_{\R^6} e^{is\Phione(\xi,\eta)}\bigg[\frac{\aktwo(\xi,\eta,\zeta)}{\big(\partial_{\eta_{l}}\Phione\partial_{\zeta_{l}}\Phione\big)(\xi,\eta,\zeta)}
\partial_{\xi_{l}}^{2}(\rho_{l}^{(3)}\hat{f}_{k_{1}})(s,\xi-\eta-\zeta)\\
&\hskip 10cm \times\hat{\bar{g}}_{k_{3}}(s,\eta)\hat{g}_{k_{4}}(s,\zeta)\\
&\quad-\partial_{\xi_{l}}(\rho_{l}^{(3)}\hat{f}_{k_{1}})(s,\xi-\eta-\zeta)\hat{\bar{g}}_{k_{3}}(s,\eta)\partial_{\zeta_{l}}\bigg(\frac{\aktwo(\xi,\eta,\zeta)}{\big(\partial_{\eta_{l}}\Phione\partial_{\zeta_{l}}\Phione\big)(\xi,\eta,\zeta)}
\hat{g}_{k_{4}}(s,\zeta)\bigg)\\
&\quad-\partial_{\xi_{l}}(\rho_{l}^{(3)}\hat{f}_{k_{1}})(s,\xi-\eta-\zeta)(\partial_{\zeta_{l}}\Phione)^{-1}(\xi,\eta,\zeta)\partial_{\eta_{l}}\bigg(\frac{\aktwo(\xi,\eta,\zeta)}{\partial_{\eta_{l}}\Phione(\xi,\eta,\zeta)}
\hat{\bar{g}}_{k_{3}}(s,\eta)\bigg)\hat{g}_{k_{4}}(s,\zeta)\\
&\quad+(\rho_{l}^{(3)}\hat{f}_{k_{1}})(s,\xi-\eta-\zeta)\hat{\bar{g}}_{k_{3}}(s,\eta)\partial_{\zeta_{l}}\bigg((\partial_{\zeta_{l}}\Phione)^{-1}(\xi,\eta,\zeta)\partial_{\eta_{l}}\big(\frac{\aktwo}{\partial_{\eta_{l}}\Phione}\big)(\xi,\eta,\zeta)\hat{g}_{k_{4}}(s,\zeta)\bigg)\\
&\quad+(\rho_{l}^{(3)}\hat{f}_{k_{1}})(s,\xi-\eta-\zeta)
\partial_{\eta_{l}}\hat{\bar{g}}_{k_{3}}(s,\eta)\partial_{\zeta_{l}}\bigg(\frac{\aktwo(\xi,\eta,\zeta)}{\big(\partial_{\eta_{l}}\Phione\partial_{\zeta_{l}}\Phione\big)(\xi,\eta,\zeta)}
\hat{g}_{k_{4}}(s,\zeta)\bigg)\bigg]d\eta d\zeta ds\\
&=:\sum_{\ell=1}^{5}\psi_{k}(\xi)\MJ_{2,2}^{(3,\ell)}(t,\xi),
\end{align*}
where $\Phione$ and $\aktwo$ are defined by \eqref{e4.38} and \eqref{e4.39}. We observe that for all $i$ and $l$, $\partial_{\xi_{i}}\Phione\in2^{(\alpha-2)k}2^{k_{2}}S^{0}(\Omega^{(1)}_{k,k_{1},k_{2},k_{3},k_{4}})$, while $\partial_{\eta_{l}}\Phione,\partial_{\zeta_{l}}\Phione\in2^{(\alpha-1)k}S^{0}(\Omega^{(1)}_{k,k_{1},k_{2},k_{3},k_{4}})$. Moreover, $|\partial_{\eta_{l}}\Phione|\sim |\partial_{\zeta_{l}}\Phione|\sim2^{(\alpha-1)k}$ if the integrands are not zero.
Definition \ref{def3.7} provides the precise formulation of the symbol class and $\Omega^{(1)}_{k,k_{1},k_{2},k_{3},k_{4}}$ is defined by \eqref{omega1}. 
By Lemma \ref{lem3.3} and \eqref{e5.28A}, we have
\begin{align*}
&\|\psi_{k}\hat{Q}_{2,2}(t)\|_{2}\les \sum_{|k_{1}-k|\leq 2}\int_{0}^{t}\|f_{k_{1}}(s)\|_{2}\sum_{k_{2}\in\Z}\big\|B(u,\bar{u})_{k_{2}}(s)\big\|_{\infty}ds\les 2^{\lambda k-2k_{+}}.
\end{align*}
$\{\psi_{k}\MI_{2,2}^{(\ell)}(t)\}_{\ell=1}^{2}$ can be estimated similarly to $\psi_{k}\hat{Q}_{2,2}(t)$.
By Lemmas \ref{lem3.3} and \ref{lem4.3}, we have
\begin{align*}
\|\psi_{k}\MI_{2,2}^{(3)}(t)\|_{2}&\les \sup_{|k_{1}-k|\leq 2}\int_{0}^{t}\sum_{k_{2}\leq k_{1}-10}2^{k_{2}-k}\|f_{k_{1}}(s)\|_{2}\|e^{-isD^{\alpha}}(x e^{isD^{\alpha}}B(u,\bar u)_{k_{2}})(s)\|_{\infty}ds\\
&\les 2^{(\lambda-1) k-2k_{+}}\int_{0}^{t}\sum_{k_{2}\in\Z}\min\{(1+s)^{-1-\delta},2^{(\lambda+\frac{3}{2})k_{2}-2k_{2,+}}\}ds\\
&\les 2^{(\lambda-1)k-2k_{+}}.
\end{align*}
These tell us that
\begin{align*}
\|\psi_{k}\partial_{\xi_{j}}\hat{Q}_{2,2}(t)\|_{2}\leq \sum_{\ell=1}^{3}\|\psi_{k}\MI_{2,2}^{(\ell)}(t)\|_{2}\les2^{(\lambda-1)k-2k_{+}}.
\end{align*}
$\{\psi_{k}\MJ_{2,2}^{(\ell)}(t)\}_{\ell=1}^{2}$ can be estimated similarly to $\hat{Q}_{2,2}(t)$ and $\partial_{\xi_{j}}\hat{Q}_{2,2}(t)$.
By Bernstein's inequality, combining Lemmas \ref{lem3.5} and \ref{lem4.2} with \eqref{e5.27A}, we obtain
\begin{align*}
&\sum_{\ell=1}^{3}\|\psi_{k}\MJ_{2,2}^{(3,\ell)}(t)\|_{2}\\
&\les \sup_{\substack{|k_{1}-k|\leq2\\
1\leq l\leq3}}\sum_{k_{2}\leq k }2^{k_{2}-k}2^{(\frac{3}{2}-\alpha)k_{2}}\sup_{|k_{4}-k_{2}|\leq2}\bigg(\int_{0}^{t}\|\nabla^{2}(\rho_{l}\hat{f}_{k_{1}})(s)\|_{2}\sum_{k_{3}\in\Z}\|u_{k_{3}}(s)\|_{\infty}\|g_{k_{4}}(s)\|_{2}ds\\
&\quad+ \int_{0}^{t}2^{-k}\|\nabla(\rho_{l}\hat{f}_{k_{1}})(s)\|_{2}\sum_{k_{3}\in\Z}\|u_{k_{3}}(s)\|_{\infty}\big(\|g_{k_{4}}(s)\|_{2}+2^{k_{2}}\|\nabla\hat{g}_{k_{4}}(s)\|_{2}\big)ds\\
&\quad+2^{-\frac{1}{2} k_{2}}\int_{0}^{t}2^{-k}\|\nabla(\rho_{l}\hat{f}_{k_{1}})(s)\|_{2}\sum_{k_{3}\leq k_{2} }\big(2^{\frac{1}{2}k_{3}}\|g_{ k_{3}}(s)\|_{2}+2^{\frac{3}{2}k_{3}}\|\nabla\hat{g}_{ k_{3}}(s)\|_{2}\big)\|u_{k_{4}}(s)\|_{\infty}ds\bigg)\\
&\les 2^{(\lambda-2)k-2k_{+}}\sum_{k_{2}\leq k}2^{k_{2}-k}\bigg(2^{(\lambda+\frac{3}{2}-\alpha)k_{2}-2k_{2,+}}
+ \sum_{k_{3}\in\Z}2^{(\lambda+\frac{3}{2}-\alpha)k_{3}-2k_{3,+}}\bigg)\\
&\les2^{(\lambda-2)k-2k_{+}}.
\end{align*}
By $(\infty,\infty,2)$, $(2,\infty,\infty)$ estimates, Lemma \ref{lem4.2} and \eqref{e4.6}, we have
\begin{align*}
&\sum_{\ell=4}^{5}\|\psi_{k}\MJ_{2,2}^{(3,\ell)}(t)\|_{2}\\
&\les 2^{-2k}\sum_{k_{2}\leq k}\int_{0}^{t}\min\{s^{-\frac{3}{2}}2^{(\lambda-\frac{3}{2}\alpha+\frac{3}{2})k-2k_{+}}2^{\lambda k_{2}-2k_{2,+}},2^{\lambda k-2k_{+}}2^{(\lambda+\frac{3}{2})k_{2}-2k_{2,+}}\}ds\\
&\les 2^{(\lambda-2) k-2k_{+}}\int_{0}^{t}\sum_{k_{2}\in\Z}2^{-2k_{2,+}}\min\{2^{(\lambda+\frac{3}{2}) k_{2}},s^{-\frac{3}{2}}2^{(\lambda-\frac{3}{2}\alpha+\frac{3}{2})k_{2}}\}ds\\
&\les 2^{(\lambda-2) k-2k_{+}}.
\end{align*}
These, together with the estimates for $\{\psi_{k}\MJ_{2,2}^{(\ell)}(t)\}_{\ell=1}^{2}$, imply that 
\begin{align*}
\|\psi_{k}\partial^{2}_{\xi_{i},\xi_{j}}\hat{Q}_{2,2}(t)\|_{2}\les 2^{(\lambda-2)k-2k_{+}}.
\end{align*}
\end{proof}

\begin{lemma}\label{lem4.17}
Suppose that \eqref{e1.1} and \eqref{e2.4} hold on $[0,\infty)$. Let $Q_{3}$ be defined as in \eqref{e5.8} and $\{(W_{i},U_{i})\}_{i=2}^{3}$ be defined as in \eqref{e4.2s}, \eqref{e4.3s}. Then we have
\begin{align*}
\sup_{t>0}\|Q_{3}(t)\|_{\PW}\les\sum_{i=2}^{3}\left(\|(w,u)\|_{W_{i}\times U_{i}}^{3}+\|(w,u)\|_{W_{i}\times U_{i}}^{4}\right).
\end{align*}
\end{lemma}

\begin{proof}
Without loss of generality, assume that $\sum_{i=2}^{3}\left(\|(w,u)\|_{W_{i}\times U_{i}}^{3}+\|(w,u)\|_{W_{i}\times U_{i}}^{4}\right)=1$. By \eqref{e5.8}, it suffices to estimate $\{\|Q_{3,\ell}(t)\|_{\PW}\}_{\ell=1}^{2}$.

\smallskip

{\bf Case 1:} Estimates for $Q_{3,1}(t)$. By Lemma \ref{lem4.16}, it suffices to show
\begin{align*}
\hat{R}_{3,1}(t,\xi):=\sum_{\substack{|k_{1}-k_{2}|<10\\
k_{1},k_{2}\in \Z}}\int_{0}^{t}\int_{\R^3} e^{is|\xi|^{\alpha}}B(u,\bar u)^\wedge_{k_{1}}(s,\xi-\eta)\overline{B(u,\bar u)}^\wedge_{k_{2}}(s,\eta)d\eta ds
\end{align*}
satisfies the desired estimate.
By applying $\partial_{\xi_{j}}$ to $\hat{R}_{3,1}(t,\xi)$, we can write $\psi_{k}(\xi)\partial_{\xi_{j}}\hat{R}_{3,1}(t,\xi)$ as
\begin{align*}
&\psi_{k}(\xi)\sum_{(k_{1},k_{2})\in \chi_{k}^{1}}\bigg(\int_{0}^{t}\int e^{is\phi(\xi,\eta)} \partial_{\xi_{j}}\big(e^{is|\xi-\eta|^{\alpha}}B(u,\bar u)^\wedge_{k_{1}}(s,\xi-\eta)\big)e^{-is|\eta|^{\alpha}}\overline{B(u,\bar u)}^\wedge_{k_{2}}(s,\eta)d\eta ds\\
&\qquad+i\int_{0}^{t}\int e^{is\phi(\xi,\eta)}s\partial_{\xi_{j}}\phi(\xi,\eta) e^{is|\xi-\eta|^{\alpha}}B(u,\bar u)^\wedge_{k_{1}}(s,\xi-\eta)e^{-is|\eta|^{\alpha}}\overline{B(u,\bar u)}^\wedge_{k_{2}}(s,\eta)d\eta ds\bigg).
\end{align*}
By integration by parts, we can write $\psi_{k}(\xi)\partial_{\xi_{i},\xi_{j}}^{2}\hat{R}_{3,1}(t,\xi)$ as
\begin{align*}
&\psi_{k}(\xi)\sum_{(k_{1},k_{2})\in \chi_{k}^{1}}\bigg(-\int_{0}^{t}\int_{\R^3} e^{is\phi(\xi,\eta)} s^{2}\aij(\xi,\eta) e^{is|\xi-\eta|^{\alpha}}B(u,\bar u)^\wedge_{k_{1}}(s,\xi-\eta)\\
&\hskip 8cm \times e^{-is|\eta|^{\alpha}}\overline{B(u,\bar u)}^\wedge_{k_{2}}(s,\eta)d\eta ds\\
&+i\int_{0}^{t}\int_{\R^3} e^{is\phi(\xi,\eta)}s \Gammatwo\big(e^{is|\xi-\eta|^{\alpha}}B(u,\bar u)^\wedge_{k_{1}}(s,\xi-\eta)\big)e^{-is|\eta|^{\alpha}}\overline{B(u,\bar u)}^\wedge_{k_{2}}(s,\eta)d\eta ds\\
&+\int_{0}^{t}\int_{\R^3}e^{is\phi(\xi,\eta)} \partial_{\xi_{i}}\big(e^{is|\xi-\eta|^{\alpha}}B(u,\bar u)^\wedge_{k_{1}}(s,\xi-\eta)\big)\partial_{\eta_{j}}\big(e^{-is|\eta|^{\alpha}}\overline{B(u,\bar u)}^\wedge_{k_{2}}(s,\eta)\big)d\eta ds\bigg),
\end{align*}
where $\aij$ is given by \eqref{e4.36} and $\Gammatwo$ is an operator defined by
\EQN{
\Gammatwo:=\partial_{\xi_{i}}\phi(\xi,\eta)\partial_{\xi_{j}}+\partial_{\xi_{j}}\phi(\xi,\eta)\partial_{\xi_{i}}+\partial_{\xi_{i},\xi_{j}}^{2}\phi(\xi,\eta) +\partial_{\eta_{j}}\phi(\xi,\eta) \partial_{\xi_{i}}.
}
By Lemmas \ref{lem3.3}, \ref{lem3.8} and \ref{lem4.3}, we have
\begin{align*}
\|\psi_{k}\hat{R}_{3,1}(t)\|_{2}&\les2^{-2k_{+}}\sum_{k_{1}\geq k-12}\int_{0}^{t}\min\{(1+s)^{-1-\delta}2^{\frac{3}{2}k}2^{(2\lambda-\alpha) k_{1}},(1+s)^{-2-2\delta}2^{(\lambda-\alpha) k_{1}}\} ds\\
&\les2^{\lambda k-2k_{+}}\int_{0}^{t}(1+s)^{-1-\delta}\min\{2^{(\lambda-\alpha+\frac{3}{2}) k},2^{-\alpha k}\} ds
\les2^{\lambda k-2k_{+}},
\end{align*}
\begin{align*}
&\|\psi_{k}\nabla^{\ell}\hat{R}_{3,1}(t)\|_{2}\les 2^{-(\ell-1)k-2k_{+}}\sum_{k_{1}\geq k-12} 2^{(\lambda-1)k_{1}}\int_{0}^{t}(1+s)^{-1-\delta}ds\les 2^{(\lambda-\ell)k-2k_{+}},
\end{align*}
where we use $(2,\infty)$ estimates for $\{\psi_{k}\nabla^{\ell}\hat{R}_{3,1}(t)\}_{\ell=1}^{2}$.

\smallskip

{\bf Case 2:} Estimates for $Q_{3,2}$. By \eqref{e5.8} and \eqref{e2.2}, we have
\begin{align*}
&\psi_{k}(\xi)\hat{Q}_{3,2}(t,\xi)=\psi_{k}(\xi)\sum_{(k_{1},k_{2})\in\chi_{k}^{2}}\int_{0}^{t}\int_{\R^3} e^{is\phi(\xi,\eta)}e^{is|\xi-\eta|^{\alpha}}B(u,\bar u)^\wedge_{k_{1}}(s,\xi-\eta)\hat{\bar{g}}_{k_{2}}(s,\eta)d\eta ds\\
&=\psi_{k}(\xi)\sum_{\substack{(k_{1},k_{2})\in\chi_{k}^{2}\\
(k_{3},k_{4})\in\chi_{k_{1}}^{3}}}\int_{0}^{t}\int_{\R^6} e^{is\Phitwo(\xi,\eta,\zeta)}\bkone\hat{g}_{k_{3}}(s,\zeta)\hat{\bar{g}}_{k_{4}}(s,\xi-\eta-\zeta)\hat{\bar{g}}_{k_{2}}(s,\eta)d\eta d\zeta ds,
\end{align*}
where
\EQ{\label{e4.42}
\phione(\xi,\eta,\zeta):=|\xi-\eta|^{\alpha}-|\zeta|^{\alpha}+|\xi-\eta-\zeta|^{\alpha},
}
\EQ{\label{e4.43}
\Phitwo(\xi,\eta,\zeta):=|\xi|^{\alpha}-|\zeta|^{\alpha}
+|\xi-\eta-\zeta|^{\alpha}+|\eta|^{\alpha}.
}
Note that $|\Phitwo|\sim|\phi_{1}|\sim2^{\alpha k}$ on $\Omega^{(2)}_{k,k_{1},k_{2},k_{3},k_{4}}$ and $\partial_{\xi_{i}}\Phitwo,\partial_{\zeta_{i}}\Phitwo\in2^{(\alpha-1)k}S^{0}(\Omega^{(2)}_{k,k_{1},k_{2},k_{3},k_{4}})$ for all $i$, where the symbol class under consideration is defined in Definition \ref{def3.7} and 
\EQN{
\Omega^{(2)}_{k,k_{1},k_{2},k_{3},k_{4}}:=\{(\xi,\eta,\zeta)\in\R^{9}: 2^{-k}|\xi|,2^{-k_{1}}|\xi-\eta|, 2^{-k_{2}}|\eta|,2^{-k_{3}}|\zeta|,2^{-k_{4}}|\xi-\eta-\zeta|\in(1/2,2)\}.
}
By \eqref{e5.8}, we can write $\psi_{k}(\xi)\partial_{\xi_{j}}\hat{Q}_{3,2}(t,\xi)$ as
\begin{align*}
&\psi_{k}(\xi)\sum_{(k_{1},k_{2})\in\chi_{k}^{2}}\bigg(\int_{0}^{t}\int_{\R^3} e^{is\phi(\xi,\eta)} \partial_{\xi_{j}}\big(e^{is|\xi-\eta|^{\alpha}}B(u,\bar u)^\wedge_{k_{1}}(s,\xi-\eta)\big)\hat{\bar{g}}_{k_{2}}(s,\eta)d\eta ds\\
&\qquad+i\int_{0}^{t}\int_{\R^3} e^{is\phi(\xi,\eta)}s\partial_{\xi_{j}}\phi(\xi,\eta) e^{is|\xi-\eta|^{\alpha}}B(u,\bar u)^\wedge_{k_{1}}(s,\xi-\eta)\hat{\bar{g}}_{k_{2}}(s,\eta)d\eta ds\bigg).
\end{align*}
By integration by parts, we can write $\psi_{k}(\xi)\partial_{\xi_{i},\xi_{j}}^{2}\hat{Q}_{3,2}(t,\xi)$ as the sum of following terms,
\begin{align*}
&\psi_{k}(\xi)\sum_{\substack{(k_{1},k_{2})\in\chi_{k}^{2}\\
(k_{3},k_{4})\in\chi_{k_{1}}^{3}}}\bigg[-\int_{0}^{t}\int_{\R^6} e^{is\Phitwo(\xi,\eta,\zeta)} s^{2}\big(\partial_{\xi_{i}}\Phitwo\partial_{\xi_{j}}\Phitwo\big)(\xi,\eta,\zeta)\hat{g}_{k_{3}}(s,\zeta)\bkone\\
&\quad \quad \quad \quad \quad \quad \quad \quad \quad \quad \times\hat{\bar{g}}_{k_{4}}(s,\xi-\eta-\zeta)\hat{\bar{g}}_{k_{2}}(s,\eta)d\eta d\zeta ds\\
&+i\int_{0}^{t}\int_{\R^6} e^{is\Phitwo(\xi,\eta,\zeta)}s\hat{g}_{k_{3}}(s,\zeta)\Gammathree
\bigg(\bkone\hat{\bar{g}}_{k_{4}}(s,\xi-\eta-\zeta)\bigg)\hat{\bar{g}}_{k_{2}}(s,\eta)d\eta d\zeta ds\\
&+i\int_{0}^{t}\int_{\R^6} e^{is\Phitwo(\xi,\eta,\zeta)}s\partial_{\zeta_{j}}\Phitwo\hat{g}_{k_{3}}(s,\zeta)
\bkone\partial_{\xi_{i}}\hat{\bar{g}}_{k_{4}}(s,\xi-\eta-\zeta)\hat{\bar{g}}_{k_{2}}(s,\eta)d\eta d\zeta ds\\
&+\int_{0}^{t}\int_{\R^6} e^{is\Phitwo(\xi,\eta,\zeta)}\hat{g}_{k_{3}}(s,\zeta)
\partial_{\xi_{j}}\bigg(\partial_{\xi_{i}}\big(\bkone\big)\hat{\bar{g}}_{k_{4}}(s,\xi-\eta-\zeta)\bigg)\hat{\bar{g}}_{k_{2}}(s,\eta)d\eta d\zeta ds\\
&+\int_{0}^{t}\int_{\R^6} e^{is\Phitwo(\xi,\eta,\zeta)}\hat{g}_{k_{3}}(s,\zeta)
\partial_{\xi_{j}}\big(\bkone\big)\partial_{\xi_{i}}\hat{\bar{g}}_{k_{4}}(s,\xi-\eta-\zeta)\hat{\bar{g}}_{k_{2}}(s,\eta)d\eta d\zeta ds\\
&+\int_{0}^{t}\int_{\R^6} e^{is\Phitwo(\xi,\eta,\zeta)}\partial_{\zeta_{j}}\big(\hat{g}_{k_{3}}(s,\zeta)
\bkone\big)\partial_{\xi_{i}}\hat{\bar{g}}_{k_{4}}(s,\xi-\eta-\zeta)\hat{\bar{g}}_{k_{2}}(s,\eta)d\eta d\zeta ds\bigg]\\
&=:\sum_{\ell=1}^{6}\psi_{k}(\xi)\MJ_{3,2}^{(\ell)}(t,\xi),
\end{align*}
where $\phi_{1},\Phi_{2}$ are defined as in \eqref{e4.42}, \eqref{e4.43}, and $\Gammathree$ is an operator defined by
\EQN{
\Gammathree:=\partial_{\xi_{i}}\Phitwo(\xi,\eta,\zeta)\partial_{\xi_{j}}+\partial_{\xi_{j}}\Phitwo(\xi,\eta,\zeta)\partial_{\xi_{i}}+\partial_{\xi_{i},\xi_{j}}^{2}\Phitwo(\xi,\eta,\zeta).
}
By integration by parts in $s$, we can express $\psi_{k}(\xi)\MJ_{3,2}^{(1)}(t,\xi)$ as
\begin{align*}
&\psi_{k}(\xi)\sum_{\substack{(k_{1},k_{2})\in\chi_{k}^{2}\\
(k_{3},k_{4})\in\chi_{k_{1}}^{3}}}\bigg[i\int_{\R^6} e^{it\Phitwo(\xi,\eta,\zeta)} t^{2}\frac{\Akone(\xi,\eta,\zeta)}{\Phitwo(\xi,\eta,\zeta)}\hat{g}_{k_{3}}(t,\zeta)\hat{\bar{g}}_{k_{4}}(t,\xi-\eta-\zeta)\hat{\bar{g}}_{k_{2}}(t,\eta)d\eta d\zeta\\
&-i\int_{0}^{t}\int_{\R^6} e^{is\Phitwo(\xi,\eta,\zeta)}\frac{\Akone(\xi,\eta,\zeta)}{\Phitwo(\xi,\eta,\zeta)}\partial_{s}\bigg(s^{2}\hat{g}_{k_{3}}(s,\zeta)\hat{\bar{g}}_{k_{4}}(s,\xi-\eta-\zeta)\hat{\bar{g}}_{k_{2}}(s,\eta)\bigg)d\eta d\zeta ds\bigg],
\end{align*}
where
\EQN{
\Akone(\xi,\eta,\zeta):=\frac{\partial_{\xi_{i}}\Phitwo(\xi,\eta,\zeta)\partial_{\xi_{j}}\Phitwo(\xi,\eta,\zeta)}{\phione(\xi,\eta,\zeta)}\psi_{k_{1}}(\xi-\eta).
}
By \eqref{e5.27A}, Lemmas \ref{lem3.9} and \ref{lem4.3}, we have
\begin{align*}
&\sum_{|\beta|\leq 1}2^{|\beta|k}\|\psi_{k}\partial_{\xi}^{\beta}\hat{Q}_{3,2}(t)\|_{2}\\
&\les\sup_{|k_{1}-k|\leq 2}\int_{0}^{t}\big(\|B(u,\bar{u})_{k_{1}}(s)\|_{2}+2^{k_{1}}\|\partial_{\xi_{j}}\big(e^{is|\xi|^{\alpha}}B(u,\bar u)^\wedge_{k_{1}}(s,\xi)\big)\|_{2}\big)\sum_{k_{2}\in\Z}\|u_{k_{2}}(s)\|_{\infty}ds\\
&\quad+\sup_{|k_{1}-k|\leq 2}\int_{0}^{t}s2^{\alpha k_{1}}\|B(u,\bar{u})_{k_{1}}(s)\|_{2}\sum_{k_{2}\in\Z}\|u_{k_{2}}(s)\|_{\infty}ds\\
&\les 2^{\lambda k-2k_{+}}.
\end{align*}
By Lemmas \ref{lem4.5}, \ref{lem4.2} and $(\infty,2,\infty)$ estimates, we have
\begin{align*}
\|\psi_{k}\partial^{2}_{\xi_{i},\xi_{j}}\hat{Q}_{3,2}(t)\|_{2}\leq\sum_{\ell=1}^{6}\|\psi_{k}\MJ_{3,2}^{(\ell)}(t)\|_{2}\les 2^{(\lambda-2)k-2k_{+}},
\end{align*}
{where we use \eqref{e4.7a} to $\{\psi_{k}\MJ_{3,2}^{(\ell)}(t)\}_{\ell=4}^{6}$.}
\end{proof}

\begin{lemma}\label{lem4.18}
Suppose that \eqref{e1.1} and \eqref{e2.4} hold on $[0,\infty)$. Let $Q_{4}$ be defined as in \eqref{e4.29} and $\{(W_{i},U_{i})\}_{i=1}^{3}$ be defined as in \eqref{e4.2s}, \eqref{e4.3s}. Then we have
\begin{align*}
\sup_{t>0}\|Q_{4}(t)\|_{\PW}\les\sum_{i=2}^{3}\big(\|(w,u)\|_{W_{i}\times U_{i}}^{3}+\|(w,u)\|_{W_{i}\times U_{i}}^{4}\big).
\end{align*}
\end{lemma}

\begin{proof}
Without loss of generality, assume that $\sum_{i=2}^{3}\big(\|(w,u)\|_{W_{i}\times U_{i}}^{3}+\|(w,u)\|_{W_{i}\times U_{i}}^{4}\big)=1$.
By \eqref{e2.2}, we have
\begin{align*}
\psi_{k}(\xi)\hat{Q}_{4}(t,\xi)=\psi_{k}(\xi)\sum_{(k_{1},k_{2})\in \chi_{k}^{3}}
\int_{0}^{t}\int_{\R^3}e^{is\phi(\xi,\eta)}\phi(\xi,\eta)^{-1}e^{is|\xi-\eta|^{\alpha}}(|u|^2)_{k_{1}}^\wedge(s,\xi-\eta)\hat{\bar{g}}_{k_{2}}(s,\eta)d\eta ds,
\end{align*}
where $\phi$ is defined as in \eqref{e2.3}.
By integration by parts, we can write $\psi_{k}(\xi)\partial_{\xi_{j}}\hat{Q}_{4}(t,\xi)$ as
\begin{align*}
&\psi_{k}(\xi)\sum_{(k_{1},k_{2})\in \chi_{k}^{3}}\bigg(\int_{0}^{t}\int_{\R^3} e^{is\phi(\xi,\eta)}s\big(\frac{\partial_{\xi_{j}}\phi+\partial_{\eta_{j}}\phi}{\phi}\big)(\xi,\eta)e^{is|\xi-\eta|^{\alpha}}(|u|^2)_{k_{1}}^\wedge(s,\xi-\eta)\hat{\bar{g}}_{k_{2}}(s,\eta)d\eta ds\\
&+\int_{0}^{t}\int_{\R^3}e^{is\phi(\xi,\eta)}\big(\partial_{\xi_{j}}+\partial_{\eta_{j}}\big)\big(\phi^{-1}\big)(\xi,\eta)e^{is|\xi-\eta|^{\alpha}}(|u|^2)_{k_{1}}^\wedge(s,\xi-\eta)\hat{\bar{g}}_{k_{2}}(s,\eta)d\eta ds\\
&+\int_{0}^{t}\int_{\R^3} e^{is\phi(\xi,\eta)}\phi(\xi,\eta)^{-1}e^{is|\xi-\eta|^{\alpha}}(|u|^2)_{k_{1}}^\wedge(s,\xi-\eta)\partial_{\eta_{j}}\hat{\bar{g}}_{k_{2}}(s,\eta)d\eta ds\bigg)\\
&=:\sum_{\ell=1}^{3}\psi_{k}(\xi)\MI_{4}^{(\ell)}(t,\xi).
\end{align*}
By integration by parts, we can write $\psi_{k}(\xi)\partial_{\xi_{i},\xi_{j}}^{2}\hat{Q}_{4}(t,\xi)$ as the sum of following terms,
\begin{align*}
&\psi_{k}(\xi)\sum_{(k_{1},k_{2})\in \chi_{k}^{3}}\bigg(i\int_{0}^{t}\int_{\R^3} e^{is\phi(\xi,\eta)}sb_{ij}(\xi,\eta)e^{is|\xi-\eta|^{\alpha}}(|u|^2)_{k_{1}}^\wedge(s,\xi-\eta)\hat{\bar{g}}_{k_{2}}(s,\eta)d\eta ds\\
&\quad+i\int_{0}^{t}\int_{\R^3} e^{is\phi(\xi,\eta)}se^{is|\xi-\eta|^{\alpha}}(|u|^2)_{k_{1}}^\wedge(s,\xi-\eta)(\Gamma_{ij}^{(4)}+\Gamma_{ji}^{(4)})\hat{\bar{g}}_{k_{2}}(s,\eta)d\eta ds\\
&\quad+\int_{0}^{t}\int_{\R^3}e^{is\phi(\xi,\eta)}e^{is|\xi-\eta|^{\alpha}}(|u|^2)_{k_{1}}^\wedge(s,\xi-\eta)(\Gamma_{ij}^{(5)}+\Gamma_{ji}^{(5)})\hat{\bar{g}}_{k_{2}}(s,\eta)d\eta ds\\
&\quad+\int_{0}^{t}\int_{\R^3}e^{is\phi(\xi,\eta)}(\partial_{\xi_{i}}+\partial_{\eta_{i}})\big(\partial_{\xi_{j}}+\partial_{\eta_{j}}\big)\big(\phi^{-1}\big)(\xi,\eta)e^{is|\xi-\eta|^{\alpha}}(|u|^2)_{k_{1}}^\wedge(s,\xi-\eta)\hat{\bar{g}}_{k_{2}}(s,\eta)d\eta ds\\
&\quad+\int_{0}^{t}\int_{\R^3} e^{is\phi(\xi,\eta)}\phi(\xi,\eta)^{-1}e^{is|\xi-\eta|^{\alpha}}(|u|^2)_{k_{1}}^\wedge(s,\xi-\eta)\partial_{\eta_{i},\eta_{j}}^{2}\hat{\bar{f}}_{k_{2}}(s,\eta)d\eta ds\\
&\quad+i\int_{0}^{t}\int_{\R^3} e^{is\phi(\xi,\eta)}\phi(\xi,\eta)^{-1}e^{is|\xi-\eta|^{\alpha}}(|u|^2)_{k_{1}}^\wedge(s,\xi-\eta)\partial_{\eta_{i},\eta_{j}}^{2}\big(e^{-is|\eta|^{\alpha}}\overline{B(u,\bar{u})}^\wedge_{k_{2}}(s,\eta)\big)d\eta ds\\
&\quad-\int_{0}^{t}\int_{\R^3} e^{is\phi(\xi,\eta)}s^{2}\frac{c_{ij}(\xi,\eta)}{\phi(\xi,\eta)}e^{is|\xi-\eta|^{\alpha}}(|u|^2)_{k_{1}}^\wedge(s,\xi-\eta)\hat{\bar{g}}_{k_{2}}(s,\eta)d\eta ds\bigg)\\
&=:\sum_{\ell=1}^{7}\psi_{k}(\xi)\MJ_{4}^{(\ell)}(t,\xi),
\end{align*}
where
\EQN{
b_{ij}(\xi,\eta):=(\partial_{\xi_{i}}+\partial_{\eta_{i}})\big(\frac{\partial_{\xi_{j}}\phi+\partial_{\eta_{j}}\phi}{\phi}\big)(\xi,\eta)+(\partial_{\xi_{i}}\phi+\partial_{\eta_{i}}\phi)(\xi,\eta)\big(\partial_{\xi_{j}}+\partial_{\eta_{j}}\big)\big(\phi^{-1}\big)(\xi,\eta),
}
\EQN{
c_{ij}(\xi,\eta):=(\partial_{\xi_{i}}\phi+\partial_{\eta_{i}}\phi)(\xi,\eta)(\partial_{\xi_{j}}\phi+\partial_{\eta_{j}}\phi)(\xi,\eta),
}
and $\{\Gamma_{ij}^{(\ell)}\}_{\ell=4}^{5}$ are operators defined by
\EQN{
\Gamma_{ij}^{(4)}:=\big(\frac{\partial_{\xi_{i}}\phi+\partial_{\eta_{i}}\phi}{\phi}\big)(\xi,\eta)\partial_{\eta_{j}},\, \, \Gamma_{ij}^{(5)}:=\big(\partial_{\xi_{i}}+\partial_{\eta_{i}}\big)\big(\phi^{-1}\big)(\xi,\eta)\partial_{\eta_{j}}.
}
We further decompose $\psi_{k}(\xi)\MJ_{4}^{(7)}(t,\xi)$ into the following four terms,
\begin{align*}
&-\psi_{k}(\xi)\sum_{(k_{1},k_{2})\in \chi_{k}^{3}}\bigg[
\int_{0}^{t}\int_{\R^3} e^{is\phi(\xi,\eta)}s^{2}\frac{c_{ij}(\xi,\eta)}{\phi(\xi,\eta)} e^{is|\xi-\eta|^{\alpha}}((u\bar{u})_{HL+LH})_{k_{1}}^\wedge(s,\xi-\eta)\hat{\bar{g}}_{k_{2}}(s,\eta)d\eta ds\\
&+\sum_{\substack{(k_{3},k_{4})\in\chi_{k_{1}}^{1}\\
k_{3}\geq k+20}}
\int_{0}^{t}\int_{\R^3} e^{is\phi(\xi,\eta)}s^{2}\frac{c_{ij}(\xi,\eta)}{\phi(\xi,\eta)}  e^{is|\xi-\eta|^{\alpha}}(u_{k_{3}}\bar{u}_{k_{4}})_{k_{1}}^\wedge(s,\xi-\eta)\hat{\bar{g}}_{k_{2}}(s,\eta)d\eta ds\\
&+\sum_{\substack{(k_{3},k_{4})\in\chi_{k_{1}}^{1}\\
k_{3}\leq k-20}}
\int_{0}^{t}\int_{\R^3} e^{is\phi(\xi,\eta)}s^{2}\frac{c_{ij}(\xi,\eta)}{\phi(\xi,\eta)} e^{is|\xi-\eta|^{\alpha}}(u_{k_{3}}\bar{u}_{k_{4}})_{k_{1}}^\wedge(s,\xi-\eta)\hat{\bar{g}}_{k_{2}}(s,\eta)d\eta ds\\
&+\sum_{\substack{(k_{3},k_{4})\in\chi_{k_{1}}^{1}\\
|k_{3}-k|<20}}
\int_{0}^{t}\int_{\R^3} e^{is\phi(\xi,\eta)}s^{2}\frac{c_{ij}(\xi,\eta)}{\phi(\xi,\eta)} e^{is|\xi-\eta|^{\alpha}}(u_{k_{3}}\bar{u}_{k_{4}})_{k_{1}}^\wedge(s,\xi-\eta)\hat{\bar{g}}_{k_{2}}(s,\eta)d\eta ds\bigg]\\
&=:\sum_{\ell=1}^{4}\psi_{k}(\xi)\MJ_{4}^{(7,\ell)}(t,\xi).
\end{align*}
To deal with $\phi^{-1}$,
we employ Lemma \ref{lem5.1} following an approach similar to that used in Lemma \ref{lem4.3}. Consequently,
by Lemmas \ref{lem3.3}, \ref{lem3.4}, \ref{lem4.3}, \ref{lem4.5}, and \eqref{e4.1s}, there exists $\varepsilon>0$ such that
\begin{align*}
\|\psi_{k}\hat{Q}_{4}(t)\|_{2}&\les 2^{\lambda k-2k_{+}}
\int_{0}^{t}2^{-\alpha k}\min\{\sum_{k_{1}\leq k+10}s^{-1}2^{(\lambda+\frac{3}{2})k_{1}}2^{-2k_{1,+}},(1+s)^{-2-2\delta}\}ds\\
&\les
2^{\lambda k-2k_{+}}\int_{0}^{t}\min\{s^{-1}2^{(\lambda-\alpha+\frac{3}{2})k},2^{-\alpha k}(1+s)^{-2-2\delta}\}ds\\
&\les
2^{\lambda k-2k_{+}}\int_{0}^{t}\min\{s^{-1+\varepsilon},s^{-1-\varepsilon}\}ds\les2^{\lambda k-2k_{+}}.
\end{align*}
By applying $(\infty,2)$ estimates to $\psi_{k}\MI_{4}^{(1)}(t)$ and the same approach used for $\psi_{k}\hat{Q}_{4}(t)$ to $\{\psi_{k}\MI_{4}^{(\ell)}(t)\}_{\ell=2}^{3}$, we obtain the desired estimate for $\psi_{k}\partial_{\xi_{j}}\hat{Q}_{4}(t)$.
By $(\infty,2)$ estimates and the same approach as for $\psi_{k}\hat{Q}_{4}(t)$, we have
\begin{align}\label{e4.44}
\sum_{\ell=1}^{6}\|\psi_{k}\MJ_{4}^{(\ell)}(t)\|_{2}\les 2^{(\lambda-2)k-2k_{+}}.
\end{align}
By integration by parts in $s$,  we can express $\psi_{k}(\xi)\MJ_{4}^{(7,1)}(t,\xi)$ as
\begin{align*}
&-\sum_{\substack{|k_{2}-k|\leq 2\\
|k_{3}-k_{4}|\geq 10}}\psi_{k}(\xi)\int_{0}^{t}\int_{\R^6} e^{is\Phithr(\xi,\eta,\zeta)}s^{2}\Akthr(\xi,\eta)\hat{g}_{k_{3}}(s,\xi-\eta-\zeta)\hat{\bar{g}}_{k_{4}}(s,\zeta)\hat{\bar{g}}_{k_{2}}(s,\eta)d\eta d\zeta ds\\
&=\psi_{k}(\xi)\sum_{\substack{|k_{2}-k|\leq 2\\
|k_{3}-k_{4}|\geq 10}}\bigg[i\int_{\R^6} e^{it\Phithr(\xi,\eta,\zeta)}t^{2}\frac{\Akthr(\xi,\eta)}{\Phithr(\xi,\eta,\zeta)}\hat{g}_{k_{3}}(t,\xi-\eta-\zeta)\hat{\bar{g}}_{k_{4}}(t,\zeta)\hat{\bar{g}}_{k_{2}}(t,\eta)d\eta d\zeta\\
&\quad-i\int_{0}^{t}\int_{\R^6} e^{is\Phithr(\xi,\eta,\zeta)}\partial_{s}\bigg(s^{2}\frac{\Akthr(\xi,\eta)}{\Phithr(\xi,\eta,\zeta)}\hat{g}_{k_{3}}(s,\xi-\eta-\zeta)\hat{\bar{g}}_{k_{4}}(s,\zeta)\hat{\bar{g}}_{k_{2}}(s,\eta)\bigg)d\eta d\zeta ds\bigg],
\end{align*}
where
$\Phithr(\xi,\eta,\zeta):=|\xi|^{\alpha}+|\eta|^{\alpha}-|\xi-\eta-\zeta|^{\alpha}+|\zeta|^{\alpha}\sim 2^{\alpha k}$ on $\Omega^{(3)}_{k,k_{2},k_{3},k_{4}}$ and
\EQ{\label{e4.45}
\Akthr(\xi,\eta):=\frac{(\partial_{\xi_{i}}\phi+\partial_{\eta_{i}}\phi)(\xi,\eta)(\partial_{\xi_{j}}\phi+\partial_{\eta_{j}}\phi)(\xi,\eta)}{\phi(\xi,\eta)}\psi_{\leq k_{2}-10}(\xi-\eta).
}
Here, $\Omega^{(3)}_{k,k_{2},k_{3},k_{4}}$ consists of all 
$(\xi,\eta,\zeta)\in\R^{9}$ satisfying
\EQ{\label{e4.46}
2^{-k}|\xi|, 2^{-k_{2}}|\eta|, 2^{-k_{3}}|\xi-\eta-\zeta|, 2^{-k_{4}}|\zeta|\in(1/2,2),2^{-k_{2}+10}|\xi-\eta|\leq2.
}
In the above sum, we must have $\max\{k_{3},k_{4}\}\leq k$ and $\psi_{\leq k_{2}-10}(\xi-\eta)$ satisfies the condition stated in Lemma \ref{lem3.3}. {In view of \eqref{e5.27A}, \eqref{e5.27AA} and \eqref{e5.13A}, we can use $(\infty,\infty,2)$ estimates to bound} $\|\psi_{k}\MJ_{4}^{(7,1)}(t)\|_{2}$ by
\begin{align*}
&2^{-2k}\sup_{|k_{2}-k|\leq2}\sum_{\substack{j_{1}\leq 
j_{2}\\
j_{2}\leq 
k}}
\bigg[\int_{0}^{t}s(\|u_{j_{1}}(s)\|_{\infty}+s\|e^{-isD^{\alpha}}\partial_{s}g_{j_{1}}(s)\|_{\infty})(\|u_{j_{2}}(s)\|_{\infty}+s\|e^{-isD^{\alpha}}\partial_{s}g_{j_{2}}(s)\|_{\infty})\\
&\qquad \times(\|g_{k_{2}}(s)\|_{2}+s\|e^{-isD^{\alpha}}\partial_{s}g_{k_{2}}(s)\|_{2})ds
+
t^{2}\|u_{j_{1}}(t)\|_{\infty}\|u_{j_{2}}(t)\|_{\infty}\|g_{k_{2}}(t)\|_{2}\bigg]\\
&\les2^{-2k}\sup_{|k_{2}-k|\leq2}\bigg(
\int_{0}^{t}(1+s)^{-1-\frac{\delta}{2}}(\|g_{k_{2}}(s)\|_{2}+s\|e^{-isD^{\alpha}}\partial_{s}g_{k_{2}}(s)\|_{2})ds
+\|g_{k_{2}}(t)\|_{2}\bigg)\\
&\les 2^{(\lambda-2)k-2k_{+}}.
\end{align*}
Let $\{\rho_{l}^{(3)}\}_{l=1}^{3}$ be given by Lemma \ref{lem3.12aa}.
By integration by parts in $\eta$, we can write $\psi_{k}(\xi)\MJ_{4}^{(7,2)}(t,\xi)$ as
\begin{align*}
&\psi_{k}(\xi)\sum_{l=1}^{3}\sum_{\substack{|k_{2}-k|\leq 2\\
|k_{3}-k_{4}|< 10\\k_{3}\geq k+20}}\bigg[-i\int_{0}^{t}\int_{\R^6} e^{is\Phithr(\xi,\eta,\zeta)}s\frac{\Akthr(\xi,\eta)}{\partial_{\eta_{l}}\Phithr(\xi,\eta,\zeta)}\partial_{\eta_{l}}\big(
\rho_{l}^{(3)}\hat{g}_{k_{3}}\big)(s,\xi-\eta-\zeta)\\
&\hskip 7cm \times \hat{\bar{g}}_{k_{4}}(s,\zeta)\hat{\bar{g}}_{k_{2}}(s,\eta)d\eta d\zeta ds\\
&-i\int_{0}^{t}\int_{\R^6} e^{is\Phithr(\xi,\eta,\zeta)}s\big(\rho_{l}^{(3)}\hat{g}_{k_{3}}\big)(s,\xi-\eta-\zeta)\hat{\bar{g}}_{k_{4}}(s,\zeta)\partial_{\eta_{l}}\bigg(\frac{\Akthr(\xi,\eta)}{\partial_{\eta_{l}}\Phithr(\xi,\eta,\zeta)}\hat{\bar{g}}_{k_{2}}(s,\eta)\bigg)d\eta d\zeta ds\bigg],
\end{align*}
where $\partial_{\eta_{l}}\Phithr\in 2^{(\alpha-1)k_{3}}S^{0}(\Omega^{(3)}_{k,k_{2},k_{3},k_{4}}
)$ and $|\partial_{\eta_{l}}\Phithr|\sim2^{(\alpha-1)k_{3}} $ if the integrands are not  zero. Here $\Akthr$ and $\Omega^{(3)}_{k,k_{2},k_{3},k_{4}}$ are defined by \eqref{e4.45} and \eqref{e4.46}.
Then by $(2,\infty,\infty)$ and $(\infty,\infty,2)$ estimates, we have
\begin{align*}
\|\psi_{k}\MJ_{4}^{(7,2)}(t)\|_{2}\les 2^{(\lambda-2)k-2k_{+}}\sum_{k_{3}\geq k+10}2^{-(\alpha-1)(k_{3}-k)}
\les 2^{(\lambda-2)k-2k_{+}}.
\end{align*}
By integration by parts in $s$, we can write $\psi_{k}(\xi)\MJ_{4}^{(7,3)}(t,\xi)$ as
\begin{align*}
&\psi_{k}(\xi)\sum_{\substack{|k_{2}-k|\leq 2\\
|k_{3}-k_{4}|< 10\\
k_{3}\leq k-20}}\bigg[i\int_{\R^6} e^{it\Phithr(\xi,\eta,\zeta)}t^{2}\frac{\Akthr(\xi,\eta)}{\Phithr(\xi,\eta,\zeta)}\hat{g}_{k_{3}}(t,\xi-\eta-\zeta)\hat{\bar{g}}_{k_{4}}(t,\zeta)\hat{\bar{g}}_{k_{2}}(t,\eta)d\eta d\zeta\\
&-i\int_{0}^{t}\int_{\R^6} e^{is\Phithr(\xi,\eta,\zeta)}\partial_{s}\bigg(s^{2}\frac{\Akthr(\xi,\eta)}{\Phithr(\xi,\eta,\zeta)}\hat{g}_{k_{3}}(s,\xi-\eta-\zeta)\hat{\bar{g}}_{k_{4}}(s,\zeta)\hat{\bar{g}}_{k_{2}}(s,\eta)\bigg)d\eta d\zeta ds\bigg],
\end{align*}
where $\Akthr$ is given by \eqref{e4.45} and $|\Phithr|\sim2^{\alpha k}$ on $\Omega^{(3)}_{k,k_{2},k_{3},k_{4}}$.
Then $\psi_{k}\MJ_{4}^{(7,3)}(t)$ can be estimated in a similar manner to $\psi_{k}\MJ_{4}^{(7,1)}(t)$.
We can express $\psi_{k}(\xi)\MJ_{4}^{(7,4)}(t,\xi)$ as
\begin{align*}
&\psi_{k}(\xi)\sum_{\substack{|k_{2}-k|\leq 2\\
|k_{3}-k_{4}|<10\\|k_{3}- k|< 20}}\bigg[-\int_{0}^{t}\int_{\R^6} e^{is\Phithr(\xi,\eta,\zeta)}s^{2}\Akkthr(\xi,\eta)\hat{g}_{k_{3}}(s,\xi-\eta-\zeta)\hat{\bar{g}}_{k_{4}}(s,\zeta)\hat{\bar{g}}_{k_{2}}(s,\eta)d\eta d\zeta ds\\
&\hskip 2cm-\int_{0}^{t}\int_{\R^6} e^{is\Phithr(\xi,\eta,\zeta)}s^{2}\Bkkthr(\xi,\eta)\hat{g}_{k_{3}}(s,\xi-\eta-\zeta)\hat{\bar{g}}_{k_{4}}(s,\zeta)\hat{\bar{g}}_{k_{2}}(s,\eta)d\eta d\zeta ds\bigg],
\end{align*}
where
$$
\Akkthr(\xi,\eta):=\frac{c_{ij}(\xi,\eta)}{\phi(\xi,\eta)}\psi_{\leq k_{2}-10}(\xi-\eta)\psi_{\leq k-100}(\xi-\eta),$$
$$
\Bkkthr(\xi,\eta):=\frac{c_{ij}(\xi,\eta)}{\phi(\xi,\eta)}\psi_{\leq k_{2}-10}(\xi-\eta)(1-\psi_{\leq k-100}(\xi-\eta)).
$$
For the first term, we have $|\Phithr|\sim 2^{\alpha k}$ on $\Omega^{(3)}_{k,k_{2},k_{3},k_{4}}$ and we can apply integration by parts in $s$ combined with $(\infty,\infty,2)$ estimates. For the second term, we have $|\xi-\eta|\sim 2^{k}$,  then $\nabla_{\zeta}\Phithr\in 2^{(\alpha-1)k}S^{0}(\Omega^{(3)}_{k,k_{2},k_{3},k_{4}})$ and $|\nabla_{\zeta}\Phithr|\sim 2^{(\alpha-1)k}$ if the integrand does not vanish. Definition \ref{def3.7} provides the precise formulation of the symbol class.
We can use integration by parts in $\zeta$. Then we can obtain the desired estimate for $\psi_{k}\MJ_{4}^{(7,4)}(t)$.
These complete the proof of $\psi_{k}\MJ_{4}^{(7)}(t)$ and then by \eqref{e4.44}, we have
\begin{align*}
\|\psi_{k}\partial^{2}_{\xi_{i},\xi_{j}}\hat{Q}_{4}(t)\|_{2}\leq\sum_{\ell=1}^{7}\|\psi_{k}\MJ_{4}^{(\ell)}(t)\|_{2}
\les 2^{(\lambda-2) k-2k_{+}}.
\end{align*}
\end{proof}

\begin{lemma}\label{lem4.19}
Suppose that \eqref{e1.1} and \eqref{e2.4} hold on $[0,\infty)$. Let $Q_{5}$ be defined as in \eqref{e4.30}, and let $\{(W_{i},U_{i})\}_{i=1}^{3}$ be defined as in \eqref{e4.2s} and \eqref{e4.3s}. Then we have
\begin{align*}
\sup_{t>0}\|Q_{5}(t)\|_{\PW}\les\sum_{i=1}^{3}\left(\|(w,u)\|_{W_{i}\times U_{i}}^{3}+\|(w,u)\|_{W_{i}\times U_{i}}^{4}\right).
\end{align*}
\end{lemma}

\begin{proof}
Without loss of generality,
assume that $\sum_{i=1}^{3}\left(\|(w,u)\|_{W_{i}\times U_{i}}^{3}+\|(w,u)\|_{W_{i}\times U_{i}}^{4}\right)=1$.
By \eqref{e2.2}, we have
\begin{align*}
\psi_{k}(\xi)\hat{Q}_{5}(t,\xi)=\psi_{k}(\xi)\sum_{(k_{1},k_{2})\in\chi_{k}^{3}}\int_{0}^{t}\int_{\R^3} e^{is\phi(\xi,\eta)}\phi(\xi,\eta)^{-1}\hat{g}_{k_{1}}(s,\xi-\eta)e^{-is|\eta|^{\alpha}}(|u|^{2})_{k_{2}}^\wedge(s,\eta)d\eta ds,
\end{align*}
and we can write $\psi_{k}(\xi)\partial_{\xi_{j}}\hat{Q}_{5}(t,\xi)$ as
\begin{align*}
&\psi_{k}(\xi)\sum_{(k_{1},k_{2})\in\chi_{k}^{3}}\bigg(i\int_{0}^{t}\int_{\R^3} e^{is\phi(\xi,\eta)}s\frac{\partial_{\xi_{j}}\phi(\xi,\eta)}{\phi(\xi,\eta)}\hat{g}_{k_{1}}(s,\xi-\eta)e^{-is|\eta|^{\alpha}}(|u|^{2})_{k_{2}}^\wedge(s,\eta)d\eta ds\\
&\hskip 3cm+\int_{0}^{t}\int_{\R^3} e^{is\phi(\xi,\eta)}\partial_{\xi_{j}}\big(\phi(\xi,\eta)^{-1}\hat{g}_{k_{1}}(s,\xi-\eta)\big)(|u|^{2})_{k_{2}}^\wedge(s,\eta)\big)d\eta ds\bigg).
\end{align*}
Define $k_{-}:=\min\{k,0\}$. Then by Lemma \ref{lem4.2}, \eqref{e5.13A} and \eqref{e4.1s}, we have
\begin{align*}
\sum_{|\beta|\leq1}2^{|\beta|k}\|\psi_{k}\partial_{\xi}^{\beta}\hat{Q}_{5}(t)\|_{2}&\les \int_{0}^{t}2^{(\lambda-\alpha+\frac{3}{2})k_{-}}\min\{s^{-1}2^{\lambda k-2k_{+}},(1+s)^{-1-\delta}2^{-2k_{+}}\}ds\\
&{\quad+\int_{0}^{t}s(1+s)^{-1-\delta}s^{-1}2^{\lambda k-2k_{+}}ds}\\
&\les2^{\lambda k-2k_{+}}.
\end{align*}

Next, we consider $\psi_{k}\partial_{\xi_{i},\xi_{j}}^{2}\hat{Q}_{5}(t)$. By \eqref{e4.30}, it suffices to estimate $\{\psi_{k}\partial_{\xi_{i},\xi_{j}}^{2}\hat{Q}_{5,\ell}(t)\}_{\ell=1}^{3}$.

{\bf Case 1:} Estimates for $\psi_{k}\partial_{\xi_{i},\xi_{j}}^{2}\hat{Q}_{5,1}(t)$.  {Let $\{\rho_{l}^{(1)}\}_{l=1}^{3}$ be given by Lemma \ref{lem3.12aa}.}
By integration by parts and \eqref{e2.2}, we have
\begin{align*}
&\psi_{k}(\xi)\partial_{\xi_{i},\xi_{j}}^{2}\hat{Q}_{5,1}(t,\xi)\\
&=\psi_{k}(\xi)\sum_{\substack{(k_{1},k_{2})\in\chi_{k}^{3}\\
(k_{3},k_{4})\in\chi_{k_{2}}^{1}}}\int_{0}^{t}\int_{\R^6} e^{is\Phifo(\xi,\eta,\zeta)}\bigg[\partial_{\xi_{i},\xi_{j}}^{2}\big(\frac{\psi_{k_{2}}(\xi-\eta)}{\phi(\xi,\xi-\eta)}\big)\hat{g}_{k_{1}}(s,\eta)\hat{\bar{g}}_{k_{3}}(s,\zeta)\hat{g}_{k_{4}}(s,\xi-\eta-\zeta)\\
&\quad+\sum_{1\leq l\leq 3}\hat{g}_{k_{1}}(s,\eta)\partial_{\zeta_{l}}\bigg(\big(\Gamma_{ij}^{(6,l)}+\Gamma_{ji}^{(6,l)}\big)\big(\frac{\psi_{k_{2}}(\xi-\eta)}{\phi(\xi,\xi-\eta)}\big)\hat{\bar{f}}_{k_{3}}(s,\zeta)\hat{f}_{k_{4}}(s,\xi-\eta-\zeta)\bigg)\\
&\quad+is\partial_{\xi_{i},\xi_{j}}^{2}\Phifo(\xi,\eta,\zeta)\frac{\psi_{k_{2}}(\xi-\eta)}{\phi(\xi,\xi-\eta)}\hat{g}_{k_{1}}(s,\eta)\hat{\bar{f}}_{k_{3}}(s,\zeta)\hat{g}_{k_{4}}(s,\xi-\eta-\zeta)\\
&\quad+is\hat{g}_{k_{1}}(s,\eta)\hat{\bar{f}}_{k_{3}}(s,\zeta)\Gamma_{ij}^{(7)}\big(\frac{\psi_{k_{2}}(\xi-\eta)}{\phi(\xi,\xi-\eta)}\big)e^{is|\xi-\eta-\zeta|^{\alpha}}B(u,\bar{u})^\wedge_{k_{4}}(s,\xi-\eta-\zeta)\\
&\quad+is\hat{g}_{k_{1}}(s,\eta)e^{-is|\zeta|^{\alpha}}\overline{B(u,\bar{u})}^\wedge_{k_{3}}(s,\zeta)\Gamma_{ij}^{(8)}
\big(\frac{\psi_{k_{2}}(\xi-\eta)}{\phi(\xi,\xi-\eta)}\hat{g}_{k_{4}}(s,\xi-\eta-\zeta)\big)\\
&\quad+is\partial_{\zeta_{i}}\Phifo(\xi,\eta,\zeta)\frac{\psi_{k_{2}}(\xi-\eta)}{\phi(\xi,\xi-\eta)}\hat{g}_{k_{1}}(s,\eta)e^{-is|\zeta|^{\alpha}}\overline{B(u,\bar{u})}^\wedge_{k_{3}}(s,\zeta)\partial_{\xi_{j}}\hat{g}_{k_{4}}(s,\xi-\eta-\zeta)\\
&\quad+\frac{\psi_{k_{2}}(\xi-\eta)}{\phi(\xi,\xi-\eta)}\hat{g}_{k_{1}}(s,\eta)\partial_{\zeta_{i}}\hat{\bar{g}}_{k_{3}}(s,\zeta)\partial_{\xi_{j}}\hat{g}_{k_{4}}(s,\xi-\eta-\zeta)\\
&\quad+\sum_{(i_{1},j_{1})=(i,j),(j,i)}\hat{g}_{k_{1}}(s,\eta)\hat{\bar{g}}_{k_{3}}(s,\zeta)\partial_{\xi_{i_{1}}}\big(\frac{\psi_{k_{2}}(\xi-\eta)}{\phi(\xi,\xi-\eta)}\big)\partial_{\xi_{j_{1}}}\hat{g}_{k_{4}}(s,\xi-\eta-\zeta)\\
&\quad+is\frac{\psi_{k_{2}}(\xi-\eta)}{\phi(\xi,\xi-\eta)}\hat{g}_{k_{1}}(s,\eta)\hat{\bar{f}}_{k_{3}}(s,\zeta)\Gamma_{ij}^{(9)}\big(\hat{g}_{k_{4}}(s,\xi-\eta-\zeta)\big)\\
&\quad-s^{2}\big(\partial_{\xi_{i}}\Phifo\partial_{\xi_{j}}\Phifo\big)(\xi,\eta,\zeta)\frac{\psi_{k_{2}}(\xi-\eta)}{\phi(\xi,\xi-\eta)}\hat{g}_{k_{1}}(s,\eta)\hat{\bar{g}}_{k_{3}}(s,\zeta)\hat{g}_{k_{4}}(s,\xi-\eta-\zeta)\bigg]d\eta d\zeta ds\\
&=:\sum_{\ell=1}^{10}\psi_{k}(\xi)\MJ_{5,1}^{(\ell)}(t,\xi),
\end{align*}
where $\Phifo(\xi,\eta,\zeta):=|\xi|^{\alpha}-|\eta|^{\alpha}+|\zeta|^{\alpha}-|\xi-\eta-\zeta|^{\alpha}$, and 
\EQN{
\Gamma_{ij}^{(6,l)}:=\rho_{l}^{(1)}(\xi-\eta,\xi-\eta-\zeta)\frac{\partial_{\xi_{i}}\Phifo(\xi,\eta,\zeta)}{\partial_{\zeta_{l}}\Phifo(\xi,\eta,\zeta)}\partial_{\xi_{j}},\quad {1\leq l\leq 3,}
}
\EQN{
\Gamma_{ij}^{(7)}:=\partial_{\xi_{i}}\Phifo(\xi,\eta,\zeta)\partial_{\xi_{j}}+\partial_{\xi_{j}}\Phifo(\xi,\eta,\zeta)\partial_{\xi_{i}},
}
\EQN{
\Gamma_{ij}^{(8)}:=\partial_{\xi_{i}}\Phifo(\xi,\eta,\zeta)\partial_{\xi_{j}}+\partial_{\xi_{j}}\Phifo(\xi,\eta,\zeta)\partial_{\xi_{i}}+\partial_{\xi_{i},\xi_{j}}^{2}\Phifo(\xi,\eta,\zeta),
}
\EQN{
\Gamma_{ij}^{(9)}:=\partial_{\xi_{i}}\Phifo(\xi,\eta,\zeta)\partial_{\xi_{j}}+\partial_{\xi_{j}}\Phifo(\xi,\eta,\zeta)\partial_{\xi_{i}}+\partial_{\zeta_{i}}\Phifo(\xi,\eta,\zeta)\partial_{\xi_{j}}.
}
We note that $\partial_{\xi_{i}}\Phifo,\partial_{\eta_{i}}\Phifo\in 2^{(\alpha-1)k_{3}}S^{0}(\Omega^{(4)}_{k,k_{1},k_{2},k_{3},k_{4}})$, $\partial_{\zeta_{i}}\Phifo\in 2^{(\alpha-2)k_{3}+k}S^{0}(\Omega^{(4)}_{k,k_{1},k_{2},k_{3},k_{4}})$, $\partial_{\xi_{i},\xi_{j}}^{2}\Phifo\in 2^{(\alpha-2)k}S^{0}(\Omega^{(4)}_{k,k_{1},k_{2},k_{3},k_{4}})$ and $|\partial_{\zeta_{l}}\Phifo(\xi,\eta,\zeta)|\sim 2^{(\alpha-2)k_{3}+k}$ if $\rho_{l}^{(1)}(\xi-\eta,\xi-\eta-\zeta)\neq0$, where $1\leq i,j,l\leq3$, the symbol class under consideration is defined in Definition \ref{def3.7} and $\Omega^{(4)}_{k,k_{1},k_{2},k_{3},k_{4}}$ is defined by
\EQ{\label{omega4}
\{(\xi,\eta,\zeta)\in \R^{9}: 2^{-k}|\xi|,2^{-k_{1}}|\eta|,2^{-k_{2}}|\xi-\eta|,2^{-k_{3}}|\zeta|,2^{-k_{4}}|\xi-\eta-\zeta|\in(1/2,2)\}.
}
By \eqref{e2.4}, we can write $\psi_{k}(\xi)\MJ_{5,1}^{(10)}(t,\xi)$ as
\begin{align*}
&-\psi_{k}(\xi)\sum_{\substack{(k_{1},k_{2})\in\chi_{k}^{3}\\
(k_{3},k_{4})\in\chi_{k_{2}}^{1}\\k_{4}> k_{2}+12}}\int_{0}^{t}\int_{\R^6} e^{is\Phifo(\xi,\eta,\zeta)}\bigg[s^{2}A_{k_{2}}^{(5,1)}(\xi,\eta,\zeta)\hat{g}_{k_{1}}(s,\eta)\hat{\bar{f}}_{k_{3}}(s,\zeta)\hat{f}_{k_{4}}(s,\xi-\eta-\zeta)\\
&+s^{2}A_{k_{2}}^{(5,1)}(\xi,\eta,\zeta)\hat{g}_{k_{1}}(s,\eta)e^{-is|\zeta|^{\alpha}}(\overline{B(u,\bar{u})})^\wedge_{k_{3}}(s,\zeta)e^{is|\xi-\eta-\zeta|^{\alpha}}B(u,\bar{u})^\wedge_{k_{4}}(s,\xi-\eta-\zeta)\\
&+s^{2}A_{k_{2}}^{(5,1)}(\xi,\eta,\zeta)\hat{g}_{k_{1}}(s,\eta)e^{-is|\zeta|^{\alpha}}\overline{B(u,\bar{u})}^\wedge_{k_{3}}(s,\zeta)\hat{f}_{k_{4}}(s,\xi-\eta-\zeta)\\
&+s^{2}A_{k_{2}}^{(5,1)}(\xi,\eta,\zeta)\hat{g}_{k_{1}}(s,\eta)\hat{\bar{f}}_{k_{3}}(s,\zeta) e^{is|\xi-\eta-\zeta|^{\alpha}}B(u,\bar{u})^\wedge_{k_{4}}(s,\xi-\eta-\zeta)\bigg]d\eta d\zeta ds\\
&-\psi_{k}(\xi)\sum_{\substack{(k_{1},k_{2})\in\chi_{k}^{3}\\
(k_{3},k_{4})\in\chi_{k_{2}}^{1}\\|k_{4}- k_{2}|\leq12}}\int_{0}^{t}\int_{\R^6} e^{is\Phifo(\xi,\eta,\zeta)}s^{2}A_{k_{2}}^{(5,1)}(\xi,\eta,\zeta)\hat{g}_{k_{1}}(s,\eta)\hat{\bar{g}}_{k_{3}}(s,\zeta)\hat{g}_{k_{4}}(s,\xi-\eta-\zeta)d\eta d\zeta ds\\
&=:\sum_{\ell=1}^{5}\psi_{k}(\xi)\MJ_{5,1}^{(10,\ell)}(t,\xi),
\end{align*}
where
\EQ{\label{e4.48}
A_{k_{2}}^{(5,1)}(\xi,\eta,\zeta):=\frac{\partial_{\xi_{i}}\Phifo(\xi,\eta,\zeta)\partial_{\xi_{j}}\Phifo(\xi,\eta,\zeta)}{\phi(\xi,\xi-\eta)}\psi_{k_{2}}(\xi-\eta).
}
The proof of $\{\psi_{k}\MJ_{5,1}^{(\ell)}(t)\}_{\ell=1}^{3}$ follows a similar argument to  that of $\psi_{k}\hat{Q}_{5}(t)$, using the method from the proof of \eqref{e5.13A}. By Bernstein's inequality, $(\infty,2,2)$ estimates, \eqref{e5.27A}, \eqref{e4.8s} and \eqref{e4.1s}, we have
\begin{align*}
\sum_{\ell=4}^{8}\|\psi_{k}\MJ_{5,1}^{(\ell)}(t)\|_{2}\les \sum_{k_{3}\geq k-20}\int_{0}^{t}2^{(\frac{1}{2}-\alpha) k}(1+s)^{-1-\frac{\delta}{2}}2^{(2\lambda-1)k_{3}-4k_{3,+}}ds\les 2^{(\lambda-2)k-2k_{+}}.
\end{align*}
By Bernstein's inequality, $(\infty,\infty,2)$ and $(\infty,2,2)$ estimates, we have
\begin{align*}
&\|\psi_{k}\MJ_{5,1}^{(9)}(t)\|_{2}\les \int_{0}^{t}s2^{-2k}(1+s)^{-1-\frac{\delta}{2}}2^{\lambda k-2k_{+}}C_{k,1}(s)ds\les 2^{(\lambda-2)k-2k_{+}},
\end{align*}
where $C_{k,1}(s)$ is defined as in \eqref{e4.5}. 
By applying integration by parts in $\zeta$ twice, we can write $\psi_{k}(\xi)\MJ_{5,1}^{(10,1)}(t,\xi)$ as
\begin{align*}
&\psi_{k}(\xi)\sum_{l=1}^{3}\sum_{\substack{(k_{1},k_{2})\in\chi_{k}^{3}\\
(k_{3},k_{4})\in\chi_{k}^{1}\\k_{4}> k_{2}+12}}\int_{0}^{t}\int_{\R^6} e^{is\Phifo(\xi,\eta,\zeta)}\hat{g}_{k_{1}}(s,\eta)
\partial_{\zeta_{l}}\bigg[\frac{1}{\partial_{\zeta_{l}}\Phifo(\xi,\eta,\zeta)}\\
&\quad\times\partial_{\zeta_{l}}\bigg(\rho_{l}^{(1)}(\xi-\eta,\xi-\eta-\zeta)\frac{A_{k_{2}}^{(5,1)}(\xi,\eta,\zeta)}{\partial_{\zeta_{l}}\Phifo(\xi,\eta,\zeta)}\hat{\bar{f}}_{k_{3}}(s,\zeta)\hat{f}_{k_{4}}(s,\xi-\eta-\zeta)\bigg)\bigg]d\eta d\zeta ds,
\end{align*}
where $A_{k_{2}}^{(5,1)}$ is given by \eqref{e4.48}. 
By Lemmas \ref{lem3.2}, \ref{lem4.6} and \eqref{e4.6s}, we have
\begin{align*}
\|\psi_{k}\MJ_{5,1}^{(10,1)}(t)\|_{2}&\les 2^{(\lambda-2) k-2k_{+}}\sup_{|k_{2}-k|\leq 2}\int_{0}^{t}2^{-\alpha k}\sum_{k_{1}\leq k}\|u_{k_{1}}(s)\|_{\infty}C_{k,1}(s)ds\les 2^{(\lambda-2) k-2k_{+}},
\end{align*}
where $C_{k,1}(s)$ is defined as in \eqref{e4.5}. Let $\{\rho_{l}^{(3)}\}_{l=1}^{3}$ be given by Lemma \ref{lem3.12aa}. For $\psi_{k}\MJ_{5,1}^{(10,3)}(t)$ and $\psi_{k}\MJ_{5,1}^{(10,4)}(t)$, we can apply integration by parts in $\eta$ to write $\psi_{k}(\xi)\MJ_{5,1}^{(10,3)}(t,\xi)$ as
\begin{align*}
&\psi_{k}(\xi)\sum_{l=1}^{3}\sum_{\substack{(k_{1},k_{2})\in\chi_{k}^{3}\\
(k_{3},k_{4})\in\chi_{k_{2}}^{1}\\k_{4}> k_{2}+12}}-i\int_{0}^{t}\int_{\R^6} e^{is\Phifo(\xi,\eta,\zeta)}\bigg[s\partial_{\eta_{l}}\bigg(\frac{A_{k_{2}}^{(5,1)}(\xi,\eta,\zeta)}{\partial_{\eta_{l}}\Phifo(\xi,\eta,\zeta)}\hat{g}_{k_{1}}(s,\eta)\bigg)\\
&\hskip 5cm\times
e^{-is|\zeta|^{\alpha}}\overline{B(u,\bar{u})}^\wedge_{k_{3}}(s,\zeta)
(\rho_{l}^{(3)}\hat{f}_{k_{4}})(s,\xi-\eta-\zeta)\\
&-is\frac{A_{k_{2}}^{(5,1)}(\xi,\eta,\zeta)}{\partial_{\eta_{l}}\Phifo(\xi,\eta,\zeta)}\hat{g}_{k_{1}}(s,\eta)e^{-is|\zeta|^{\alpha}}\overline{B(u,\bar{u})}^\wedge_{k_{3}}(s,\zeta)\partial_{\xi_{l}}(\rho_{l}^{(3)}\hat{f}_{k_{4}})(s,\xi-\eta-\zeta)\bigg]d\eta d\zeta ds,
\end{align*}
and to write $\psi_{k}(\xi)\MJ_{5,1}^{(10,4)}(t,\xi)$ as
\begin{align*}
&\psi_{k}(\xi)\sum_{l=1}^{3}\sum_{\substack{(k_{1},k_{2})\in\chi_{k}^{3}\\
(k_{3},k_{4})\in\chi_{k_{2}}^{1}\\k_{4}> k_{2}+12}}i\int_{0}^{t}\int_{\R^6} e^{is\Phifo(\xi,\eta,\zeta)}\bigg[
s\frac{A_{k_{2}}^{(5,1)}(\xi,\eta,\zeta)}{\partial_{\eta_{l}}\Phifo(\xi,\eta,\zeta)}\hat{g}_{k_{1}}(s,\eta)\hat{\bar{f}}_{k_{3}}(s,\zeta)\\
&\qquad\qquad\qquad\qquad\quad\times\partial_{\xi_{l}}(\rho_{l}^{(3)}e^{is|\cdot|^{\alpha}}B(u,\bar{u})^\wedge_{k_{4}})(s,\xi-\eta-\zeta)\\
&-s\partial_{\eta_{l}}\bigg(\frac{A_{k_{2}}^{(5,1)}(\xi,\eta,\zeta)}{\partial_{\eta_{l}}\Phifo(\xi,\eta,\zeta)}\hat{g}_{k_{1}}(s,\eta)\bigg)\hat{\bar{f}}_{k_{3}}(s,\zeta) (\rho_{l}^{(3)}e^{is|\cdot|^{\alpha}}B(u,\bar{u})^\wedge_{k_{4}})(s,\xi-\eta-\zeta)\bigg]d\eta d\zeta ds,
\end{align*}
where $A_{k_{2}}^{(5,1)}$ is given by \eqref{e4.48} and $|\partial_{\eta_{l}}\Phifo|\sim 2^{(\alpha-1)k_{3}}$ if the integrands are not  zero.
By $(\infty,\infty,2)$ estimates, Proposition \ref{prop4.9}, Lemma \ref{lem4.2}, as well as \eqref{e4.8s} and \eqref{e4.9s}, we can get the desired estimates for $\{\psi_{k}\MJ_{5,1}^{(10,\ell)}(t)\}_{\ell=2}^{4}$. For $\psi_{k}\MJ_{5,1}^{(10,5)}(t)$, $k,k_{2},k_{3},k_{4}$ are roughly the same and we can apply integration by parts in $\zeta$.
These give us the estimate for $\psi_{k}\MJ_{5,1}^{(10)}(t)$ and then we have
\begin{align*}
\|\psi_{k}\partial^{2}_{\xi_{i},\xi_{j}}\hat{Q}_{5,1}(t)\|_{2}\leq\sum_{\ell=1}^{10}\|\psi_{k}\MJ_{5,1}^{(\ell)}(t)\|_{2}\les 2^{(\lambda-2) k-2k_{+}}.
\end{align*}

{\bf Case 2:} Estimates for $\psi_{k}\partial_{\xi_{i},\xi_{j}}^{2}\hat{Q}_{5,2}(t)$. We can write $\psi_{k}(\xi)\partial_{\xi_{i},\xi_{j}}^{2}\hat{Q}_{5,2}(t,\xi)$ as
\begin{align*}
&\psi_{k}(\xi)\sum_{\substack{(k_{1},k_{2})\in\chi_{k}^{3}\\
(k_{3},k_{4})\in\chi_{k_{2}}^{2}}}\int_{0}^{t}\int_{\R^6} e^{is\Phifi(\xi,\eta,\zeta)}
\bigg[\partial_{\xi_{i},\xi_{j}}^{2}\big(\frac{\psi_{k_{2}}(\xi-\eta)}{\phi(\xi,\xi-\eta)}\big)\hat{g}_{k_{1}}(s,\eta)\hat{\bar{g}}_{k_{3}}(s,\xi-\eta-\zeta)\hat{g}_{k_{4}}(s,\zeta)\\
&+\sum_{(i_{1},j_{1})=(i,j),(j,i)}\partial_{\xi_{i_{1}}}\big(\frac{\psi_{k_{2}}(\xi-\eta)}{\phi(\xi,\xi-\eta)}\big)\hat{g}_{k_{1}}(s,\eta)\partial_{\xi_{j_{1}}}\hat{\bar{g}}_{k_{3}}(s,\xi-\eta-\zeta)\hat{g}_{k_{4}}(s,\zeta)\\
&+\partial_{\eta_{i}}\big(\frac{\psi_{k_{2}}(\xi-\eta)}{\phi(\xi,\xi-\eta)}\hat{g}_{k_{1}}(s,\eta)\big)\partial_{\xi_{j}}\hat{\bar{g}}_{k_{3}}(s,\xi-\eta-\zeta)\hat{g}_{k_{4}}(s,\zeta)\\
&+is\partial_{\eta_{i}}\Phifi(\xi,\eta,\zeta)
\frac{\psi_{k_{2}}(\xi-\eta)}{\phi(\xi,\xi-\eta)}\hat{g}_{k_{1}}(s,\eta)\partial_{\xi_{j}}\hat{\bar{g}}_{k_{3}}(s,\xi-\eta-\zeta)\hat{g}_{k_{4}}(s,\zeta)\\
&+is\hat{g}_{k_{1}}(s,\eta)(\partial_{\xi_{i}}\Phifi(\xi,\eta,\zeta)
\partial_{\xi_{j}}+\partial_{\xi_{j}}\Phifi(\xi,\eta,\zeta)
\partial_{\xi_{i}})\big(\frac{\psi_{k_{2}}(\xi-\eta)}{\phi(\xi,\xi-\eta)}\hat{\bar{g}}_{k_{3}}(s,\xi-\eta-\zeta)\big)\hat{g}_{k_{4}}(s,\zeta)\\
&+is\hat{g}_{k_{1}}(s,\eta)\partial_{\xi_{i},\xi_{j}}^{2}\Phifi(\xi,\eta,\zeta)
\frac{\psi_{k_{2}}(\xi-\eta)}{\phi(\xi,\xi-\eta)}\hat{\bar{g}}_{k_{3}}(s,\xi-\eta-\zeta)\hat{g}_{k_{4}}(s,\zeta)\\
&-s^{2}\partial_{\xi_{i}}\Phifi(\xi,\eta,\zeta)\partial_{\xi_{j}}\Phifi(\xi,\eta,\zeta)\frac{\psi_{k_{2}}(\xi-\eta)}{\phi(\xi,\xi-\eta)}\hat{g}_{k_{1}}(s,\eta)\hat{\bar{g}}_{k_{3}}(s,\xi-\eta-\zeta)\hat{g}_{k_{4}}(s,\zeta)\bigg]d\eta d\zeta ds\\
&=:\sum_{\ell=1}^{7}\psi_{k}(\xi)\MJ_{5,2}^{(\ell)}(t,\xi),
\end{align*}
where $\Phifi(\xi,\eta,\zeta):=|\xi|^{\alpha}-|\eta|^{\alpha}+|\xi-\eta-\zeta|^{\alpha}-|\zeta|^{\alpha}$, $\partial_{\xi_{i}}\Phifi,\partial_{\eta_{i}}\Phifi\in2^{(\alpha-1)k}S^{0}(\Omega^{(5)}_{k,k_{1},k_{2},k_{3},k_{4}})$ for all $i$ and $\Omega^{(5)}_{k,k_{1},k_{2},k_{3},k_{4}}$ is defined by
\EQN{
\{(\xi,\eta,\zeta)\in \R^{9}: 2^{-k}|\xi|,2^{-k_{1}}|\eta|,2^{-k_{2}}|\xi-\eta|,2^{-k_{3}}|\xi-\eta-\zeta|,2^{-k_{4}}|\zeta|\in(1/2,2)\}.
}
The symbol class under consideration is defined in Definition \ref{def3.7}.
By integration by parts in $s$, we can write $\psi_{k}(\xi)\MJ_{5,2}^{(7)}(t,\xi)$ as
\begin{align*}
&\psi_{k}(\xi)\sum_{\substack{(k_{1},k_{2})\in\chi_{k}^{3}\\
(k_{3},k_{4})\in\chi_{k_{2}}^{2}}}\bigg[i\int_{\R^6} e^{it\Phifi(\xi,\eta,\zeta)}t^{2}\frac{A_{k_{2}}^{(5,2)}(\xi,\eta,\zeta)}{\Phifi(\xi,\eta,\zeta)}\hat{g}_{k_{1}}(t,\eta)\hat{\bar{g}}_{k_{3}}(t,\xi-\eta-\zeta)\hat{g}_{k_{4}}(t,\zeta)d\eta d\zeta\\
&-i\int_{0}^{t}\int_{\R^6} e^{is\Phifi(\xi,\eta,\zeta)}\partial_{s}\bigg(s^{2}\frac{A_{k_{2}}^{(5,2)}(\xi,\eta,\zeta)}{\Phifi(\xi,\eta,\zeta)}\hat{g}_{k_{1}}(s,\eta)\hat{\bar{g}}_{k_{3}}(s,\xi-\eta-\zeta)\hat{g}_{k_{4}}(s,\zeta)\bigg)d\eta d\zeta ds\bigg],
\end{align*}
where $|\Phifi|\sim 2^{\alpha k}$ on $\Omega^{(5)}_{k,k_{1},k_{2},k_{3},k_{4}}$ and
\EQ{\label{e4.49}
A_{k_{2}}^{(5,2)}(\xi,\eta,\zeta):=\frac{\partial_{\xi_{i}}\Phifi(\xi,\eta,\zeta)\partial_{\xi_{j}}\Phifi(\xi,\eta,\zeta)}{\phi(\xi,\xi-\eta)}\psi_{k_{2}}(\xi-\eta).
}
An application of Lemmas \ref{lem4.2} and \ref{lem4.5}, together with \eqref{e5.27A} and $(\infty,2,\infty)$ estimates, yields
\begin{align*}
\|\psi_{k}\partial^{2}_{\xi_{i},\xi_{j}}\hat{Q}_{5,2}(t)\|_{2}\leq\sum_{\ell=1}^{7}\|\psi_{k}\MJ_{5,2}^{(\ell)}(t)\|_{2}\les 2^{(\lambda-2)k-2k_{+}}.
\end{align*}

{\bf Case 3:} Estimates for $\psi_{k}\partial_{\xi_{i},\xi_{j}}^{2}\hat{Q}_{5,3}(t)$. We can express $\psi_{k}(\xi)\partial_{\xi_{i},\xi_{j}}^{2}\hat{Q}_{5,3}(t,\xi)$ as
\begin{align*}
&\psi_{k}(\xi)\sum_{\substack{(k_{1},k_{2})\in\chi_{k}^{3}\\
(k_{3},k_{4})\in\chi_{k_{2}}^{3}}}\int_{0}^{t}\int_{\R^6} e^{is\Phisi(\xi,\eta,\zeta)}\bigg[\partial_{\xi_{i},\xi_{j}}^{2}\big(\frac{\psi_{k_{2}}(\xi-\eta)}{\phi(\xi,\xi-\eta)}\big)\hat{g}_{k_{1}}(s,\eta)\hat{\bar{g}}_{k_{3}}(s,\zeta)\hat{g}_{k_{4}}(s,\xi-\eta-\zeta)\\
&+\sum_{(i_{1},j_{1})=(i,j),(j,i)}\partial_{\xi_{i_{1}}}(\frac{\psi_{k_{2}}(\xi-\eta)}{\phi(\xi,\xi-\eta)})\hat{g}_{k_{1}}(s,\eta)\hat{\bar{g}}_{k_{3}}(s,\zeta)\partial_{\xi_{j_{1}}}\hat{g}_{k_{4}}(s,\xi-\eta-\zeta)\\
&+\partial_{\eta_{i}}\big(\frac{\psi_{k_{2}}(\xi-\eta)}{\phi(\xi,\xi-\eta)}\hat{g}_{k_{1}}(s,\eta)\big)\hat{\bar{g}}_{k_{3}}(s,\zeta)\partial_{\xi_{j}}\hat{g}_{k_{4}}(s,\xi-\eta-\zeta)\\
&+is\partial_{\eta_{i}}\Phisi(\xi,\eta,\zeta)\frac{\psi_{k_{2}}(\xi-\eta)}{\phi(\xi,\xi-\eta)}\hat{g}_{k_{1}}(s,\eta)\hat{\bar{g}}_{k_{3}}(s,\zeta)\partial_{\xi_{j}}\hat{g}_{k_{4}}(s,\xi-\eta-\zeta)\\
&+is\hat{g}_{k_{1}}(s,\eta)\hat{\bar{g}}_{k_{3}}(s,\zeta)\Gamma_{ij}^{(10)}\big(\frac{\psi_{k_{2}}(\xi-\eta)}{\phi(\xi,\xi-\eta)}\hat{g}_{k_{4}}(s,\xi-\eta-\zeta)\big)\\
&+is^{2}A_{k_{2}}^{(5,3)}(\xi,\eta,\zeta)\hat{g}_{k_{1}}(s,\eta)\hat{\bar{g}}_{k_{3}}(s,\zeta)B(u,\bar{u})^\wedge_{k_{4}}(s,\xi-\eta-\zeta)\\
&-s^{2}A_{k_{2}}^{(5,3)}(\xi,\eta,\zeta)\hat{g}_{k_{1}}(s,\eta)\hat{\bar{g}}_{k_{3}}(s,\zeta)\hat{f}_{k_{4}}(s,\xi-\eta-\zeta)\bigg]d\eta d\zeta ds\\
&=:\sum_{\ell=1}^{7}\psi_{k}(\xi)\MJ_{5,3}^{(\ell)}(t,\xi),
\end{align*}
where $\Phisi(\xi,\eta,\zeta):=|\xi|^{\alpha}-|\eta|^{\alpha}+|\zeta|^{\alpha}-|\xi-\eta-\zeta|^{\alpha}$ and
\EQ{\label{e4.50}
A_{k_{2}}^{(5,3)}(\xi,\eta,\zeta):=\frac{\partial_{\xi_{i}}\Phisi(\xi,\eta,\zeta)\partial_{\xi_{j}}\Phisi(\xi,\eta,\zeta)}{\phi(\xi,\xi-\eta)}\psi_{k_{2}}(\xi-\eta),
}
\EQN{
\Gamma_{ij}^{(10)}:=\partial_{\xi_{i}}\Phisi(\xi,\eta,\zeta)\partial_{\xi_{j}}+\partial_{\xi_{j}}\Phisi(\xi,\eta,\zeta)\partial_{\xi_{i}}+\partial_{\xi_{i},\xi_{j}}^{2}\Phisi(\xi,\eta,\zeta).
}
We have that $\partial_{\xi_{i}}\Phisi\in 2^{(\alpha-2)k}\max\{2^{k_{1}},2^{k_{3}}\}S^{0}(\Omega^{(4)}_{k,k_{1},k_{2},k_{3},k_{4}})$, $\partial_{\eta_{i}}\Phisi\in 2^{(\alpha-1)k}S^{0}(\Omega^{(4)}_{k,k_{1},k_{2},k_{3},k_{4}})$,  and $\partial_{\zeta_{i}}\Phisi\in 2^{(\alpha-1)k}S^{0}(\Omega^{(4)}_{k,k_{1},k_{2},k_{3},k_{4}})$ for all $i$. Here $\Omega^{(4)}_{k,k_{1},k_{2},k_{3},k_{4}}$ is defined as in \eqref{omega4} and { $\{\rho_{l}^{(3)}\}_{l=1}^{3}$ is given by Lemma \ref{lem3.12aa}.}  {The symbol class under consideration is defined in Definition \ref{def3.7}.}

{By integration by parts, we have
\EQN{
\psi_{k}(\xi)\MJ_{5,3}^{(7)}(t,\xi)=\sum_{\ell=1}^{6}\psi_{k}(\xi)\MJ_{5,3}^{(7,\ell)}(t,\xi),
}
where  $\{\MJ_{5,3}^{(7,\ell)}\}_{\ell=1}^{6}$ has the same form as  $\MJ_{5,3}^{(7)}$ except for symbols and their symbols are given by 
\EQN{
\sum_{l=1}^{3}is\frac{A_{k_{2}}^{(5,3)}(\xi,\eta,\zeta)}{\partial_{\eta_{l}}\Phisi(\xi,\eta,\zeta)}\hat{g}_{k_{1}}(s,\eta)\hat{\bar{g}}_{k_{3}}(s,\zeta)\partial_{\xi_{l}}\big(\rho_{l}^{(3)}\hat{f}_{k_{4}}\big)(s,\xi-\eta-\zeta),
}
\EQN{
\sum_{l=1}^{3}i\frac{1}{\partial_{\zeta_{l}}\Phisi}\partial_{\eta_{l}}\bigg(\frac{A_{k_{2}}^{(5,3)}(\xi,\eta,\zeta)}{\partial_{\eta_{l}}\Phisi(\xi,\eta,\zeta)}\hat{g}_{k_{1}}(s,\eta)\bigg)\hat{\bar{g}}_{k_{3}}(s,\zeta)\partial_{\xi_{l}}\big(\rho_{l}^{(3)}\hat{f}_{k_{4}}\big)(s,\xi-\eta-\zeta),
}
\EQN{
\sum_{l=1}^{3}\chi_{[k_{1},\infty)}(k_{3})\frac{A_{k_{2}}^{(5,3)}(\xi,\eta,\zeta)}{\big(\partial_{\eta_{l}}\Phisi\partial_{\zeta_{l}}\Phisi\big)(\xi,\eta,\zeta)}\partial_{\eta_{l}}\hat{g}_{k_{1}}(s,\eta)\partial_{\zeta_{l}}\hat{\bar{g}}_{k_{3}}(s,\zeta)\big(\rho_{l}^{(3)}\hat{f}_{k_{4}}\big)(s,\xi-\eta-\zeta),
}
\EQN{
\sum_{l=1}^{3}\chi_{(k_{3},\infty)}(k_{1})\frac{A_{k_{2}}^{(5,3)}(\xi,\eta,\zeta)}{\big(\partial_{\eta_{l}}\Phisi\partial_{\zeta_{l}}\Phisi\big)(\xi,\eta,\zeta)}\partial_{\eta_{l}}\hat{g}_{k_{1}}(s,\eta)\partial_{\zeta_{l}}\hat{\bar{g}}_{k_{3}}(s,\zeta)\big(\rho_{l}^{(3)}\hat{f}_{k_{4}}\big)(s,\xi-\eta-\zeta),
}
\EQN{
\sum_{l=1}^{3}\hat{g}_{k_{1}}(s,\eta)\partial_{\zeta_{l}}\bigg(\frac{1}{\partial_{\zeta_{l}}\Phisi(\xi,\eta,\zeta)}\partial_{\eta_{l}}\big(\frac{A_{k_{2}}^{(5,3)}(\xi,\eta,\zeta)}{\partial_{\eta_{l}}\Phisi(\xi,\eta,\zeta)}\big)\hat{\bar{g}}_{k_{3}}(s,\zeta)\bigg)\big(\rho_{l}^{(3)}\hat{f}_{k_{4}}\big)(s,\xi-\eta-\zeta),
}
\begin{align*}
\sum_{l=1}^{3}\partial_{\zeta_{l}}\bigg(\frac{A_{k_{2}}^{(5,3)}(\xi,\eta,\zeta)}{\big(\partial_{\eta_{l}}\Phisi\partial_{\zeta_{l}}\Phisi\big)(\xi,\eta,\zeta)}\bigg)\partial_{\eta_{l}}\hat{g}_{k_{1}}(s,\eta)\hat{\bar{g}}_{k_{3}}(s,\zeta)\big(\rho_{l}^{(3)}\hat{f}_{k_{4}}\big)(s,\xi-\eta-\zeta).
\end{align*}
We remark that}  $A_{k_{2}}^{(5,3)}$ is given by \eqref{e4.50} and $|\partial_{\eta_{l}}\Phisi|$, $|\partial_{\zeta_{l}}\Phisi|\sim 2^{(\alpha-1)k}$ if the symbols are not zero. 
By combining Lemmas \ref{lem4.2} and \ref{lem4.3} with $(\infty,\infty,2)$ estimates, we can derive bounds for $\{\psi_{k}\MJ_{5,3}^{(\ell)}(t)\}_{\ell=1}^{6}$ and $\{\psi_{k}\MJ_{5,3}^{(7,\ell)}(t)\}_{\ell=1}^{2}$.
By Lemma \ref{lem4.2} and \eqref{e4.1s},  for $k\leq0$, we have
\begin{align*}
\|\psi_{k}\MJ_{5,3}^{(7,3)}(t)\|_{2}&\les 2^{-\alpha k-2k}\sum_{k_{3}\leq k}\sum_{k_{1}\leq k_{3}}2^{2k_{3}}2^{\frac{3}{2}k_{1}}\int_{0}^{t}\|\nabla\hat{g}_{k_{1}}(s)\|_{2}\|\nabla\hat{g}_{k_{3}}(s)\|_{2}\|w(s)\|_{\infty}ds\\
&\les 2^{-3k}\sum_{k_{3}\leq k}\sum_{k_{1}\leq k_{3}}2^{(\lambda+\frac{1}{2})k_{1}-2k_{1,+}}2^{(\lambda-\alpha+2)k_{3}-2k_{3,+}}\int_{0}^{t}(1+s)^{-1-\delta}ds\\
&\les2^{-3k}\sum_{k_{3}\leq k}2^{(\lambda+1)k_{3}-2k_{3,+}}
\les2^{(\lambda-2)k},
\end{align*}
while for $k\geq0$, by  Lemmas \ref{lem3.2} 
 and \ref{lem4.2}, we can bound $\|\psi_{k}\MJ_{5,3}^{(7,3)}(t)\|_{2}$ by 
\begin{align*}
&2^{-\alpha k-2k}\sup_{|k_{4}-k|\leq4}\sum_{\substack{k_{1}\leq k_{3}\\
k_{3}\leq k}}
2^{2k_{3}}2^{\frac{3}{2}k_{1}}\int_{0}^{t}\|\nabla\hat{g}_{k_{1}}(s)\|_{2}\|\nabla\hat{g}_{k_{3}}(s)\|_{2}\min\{2^{\frac{3}{2}k_{3}}\|w_{k_{4}}(s)\|_{2},\|w_{k_{4}}(s)\|_{\infty}\}ds\\
&\les2^{-3k}\sum_{k_{3}\leq k}2^{(\lambda+1)k_{3}-2k_{3,+}}\int_{0}^{\infty}\min\{2^{\frac{3}{2}k_{3}}2^{-2k_{+}},s^{-\frac{3}{2}}2^{(\lambda-\frac{3}{2}\alpha+\frac{3}{2})k-2k_{+}}\}ds\\
&\les2^{-3k}\sum_{k_{3}\leq k}2^{(\lambda+1)k_{3}-2k_{3,+}}2^{{\frac{1}{4}}k_{3}}2^{(\frac{2}{3}\lambda-{\frac{1}{2}}\alpha+1)k-2k_{+}}2^{\frac{\alpha}{4}k}\int_{0}^{\infty}\min\{s^{-\frac{3}{4}},s^{-\frac{5}{4}}\}ds\\
&\les2^{\frac{\alpha}{4}k}2^{(\lambda-{\frac{1}{2}}\alpha-2)k-2k_{+}}\leq 2^{(\lambda-2)k-2k_{+}}.
\end{align*}
We can estimate $\psi_{k}\MJ_{5,3}^{(7,4)}(t)$ in the same manner as $\psi_{k}\MJ_{5,3}^{(7,3)}(t)$. {The same approach used for $\{\psi_{k}\MJ_{5,3}^{(7,\ell)}(t)\}_{\ell=3}^{4}$ applies to estimating  $\{\psi_{k}\MJ_{5,3}^{(7,\ell)}(t)\}_{\ell=5}^{6}$, by separately considering the cases $k_{3}\geq k_{1}$ and $k_{1}>k_{3}$.}
These imply that
\begin{align*}
\|\psi_{k}\partial^{2}_{\xi_{i},\xi_{j}}\hat{Q}_{5,3}(t)\|_{2}\leq\sum_{\ell=1}^{7}\|\psi_{k}\MJ_{5,3}^{(\ell)}(t)\|_{2}\les 2^{(\lambda-2)k}2^{-2k_{+}}.
\end{align*}
Based on the results for $\{\psi_{k}\partial_{\xi_{i},\xi_{j}}^{2}\hat{Q}_{5,\ell}(t)\}_{\ell=1}^{3}$, we conclude that
\begin{align*}
\|\psi_{k}\partial^{2}_{\xi_{i},\xi_{j}}\hat{Q}_{5}(t)\|_{2}\leq\sum_{\ell=1}^{3}\|\psi_{k}\partial^{2}_{\xi_{i},\xi_{j}}\hat{Q}_{5,\ell}(t)\|_{2}\les 2^{(\lambda-2)k}2^{-2k_{+}}.
\end{align*}
\end{proof}

\begin{proof}[Proof of Proposition \ref{prop4.10s}]
Recall $f(t)=e^{itD^{\alpha}}w(t)$. 
By Lemmas \ref{lem4.15}, \ref{lem4.16}, \ref{lem4.17}, \ref{lem4.18} and \ref{lem4.19}, we have
\begin{align*}
\sup_{t>0}\sum_{\ell=1}^{5}\|Q_{\ell}(t)\|_{\PW}\les \sum_{l=2}^{4}\|(w,u)\|_{W\times U}^{l}.
\end{align*}
By \eqref{e2.5} and Duhamel's formula, for all $t\geq0$, we have
\begin{align*}
\hat{f}(t,\xi)&=\hat{w}_{0}(\xi)
+\sum_{\substack{k_{2}-k_{1}\leq 10\\
k_{1},k_{2}\in \Z}}\bigg(\int_{0}^{t}\int_{\R^3}e^{is|\xi|^{\alpha}}\hat{w}_{k_{1}}(s,\xi-\eta)\hat{\bar{w}}_{k_{2}}(s,\eta)d\eta ds\\
&\quad+\int_{0}^{t}\int_{\R^3}e^{is|\xi|^{\alpha}}\hat{w}_{k_{1}}(s,\xi-\eta)(\overline{B(u,\bar u)})^\wedge_{k_{2}}(s,\eta)d\eta ds\\
&\quad+\int_{0}^{t}\int_{\R^3}e^{is|\xi|^{\alpha}}B(u,\bar u)^\wedge_{k_{1}}(s,\xi-\eta)\hat{\bar{u}}_{k_{2}}(s,\eta)d\eta ds+\int_{0}^{t}e^{is|\xi|^{\alpha}}B(|u|^{2},\bar u)^\wedge(s,\xi)ds\\
&\quad+\int_{0}^{t}e^{is|\xi|^{\alpha}}B(u,|u|^{2})^\wedge(s,\xi)ds\bigg)\\
&=\hat{w}_{0}(\xi)+\sum_{\ell=1}^{5}\hat{Q}_{\ell}(t,\xi),
\end{align*}
which means
\begin{align*}
\|w\|_{W_{3}}=\sup_{t\geq0}\|f(t)\|_{\PW}\les \|w_{0}\|_{\PW}+\sum_{l=2}^{4}\|(w,u)\|_{W\times U}^{l}.
\end{align*}
\end{proof}

\begin{proof}[Proof of Proposition \ref{prop4.1}]
Proposition \ref{prop4.1} follows by combining Propositions \ref{prop4.7}, \ref{prop4.9}, and \ref{prop4.10s}.
\end{proof}

\section{Proof of the main theorem}
\setcounter{equation}{0}

Define $(w^{(0)}(t),u^{(0)}(t)):=(0,0)$ for all $t\in\R$. For $n\geq0$, consider
\EQ{\label{e6.1}
\CAS{
(\partial_{t}+iD^\alpha)u^{(n+1)}=u^{(n)}\overline{u^{(n)}}, \\
u^{(n+1)}= w^{(n+1)}-iB(u^{(n)},\overline{ u^{(n)}}),\\
u^{(n+1)}(0)=u_{0}\in \Hs\cap \PW.
}
}
Similarly to the proof of Lemma \ref{lem2.1}, for all $n\geq1$, we have
\EQ{\label{e6.2}
(\partial_{t}+iD^\alpha)w^{(n+1)}&= (w^{(n)}\overline{w^{(n)}})_{HH+HL}+i[w^{(n)} \overline{B(u^{(n-1)},\overline{u^{(n-1)}})}]_{HH+HL}\\
&-i[B(u^{(n-1)},\overline{u^{(n-1)}})\overline{u^{(n)}}]_{HH+HL}+iB(|u^{(n-1)}|^2,\overline{u^{(n)}})+iB(u^{(n)},|u^{(n-1)}|^2).
}
We denote the nonlinear term of \eqref{e6.2} by $Q(w^{(n)},u^{(n)},u^{(n-1)})$. 

\begin{lemma}\label{lem6.1}
Let $(W,U)$ be defined as in \eqref{e4.4A}. Under the assumption of Theorem \ref{thm:main}, $\{(w^{(n)},u^{(n)})\}_{n\geq1}\subseteq C(\R;\Hs)^2$ and satisfy
\EQ{\label{e6.3}
\sup_{n\geq1}\|(w^{(n)},u^{(n)})\|_{W\times U}\les \varepsilon_{0}.
}
Moreover, for all $n\geq4$, we have
\EQ{\label{e6.4}
&\|(w^{(n+1)}-w^{(n)},u^{(n+1)}-u^{(n)})\|_{W\times U}\\
&\leq \frac{1}{2}\sup_{n-2\leq j\leq n}\|(w^{(j)}-w^{(j-1)},u^{(j)}-u^{(j-1)})\|_{W\times U}.
}
\end{lemma}

\begin{proof}
By the theory of linear equations, we have $(w^{(n)},u^{(n)})\in C(\R;\Hs)^2$ for all $n\geq1$.
Since $(e^{itD^{\alpha}}w^{(1)}(t),e^{itD^{\alpha}}u^{(1)}(t))=(u_{0},u_{0})$, an application of Proposition \ref{prop4.9} yields 
\EQN{
\|(w^{(1)},u^{(1)})\|_{W\times U}\les\|u_{0}\|_{\Hs}+\|u_{0}\|_{\PW}\leq\varepsilon_{0}.
}
Let $n\geq2$. By \eqref{e6.1} and Lemma \ref{lem4.7a}, we have $w^{(n)}(0)=w_{0}$ and 
\EQN{
\|w^{(n)}(0)\|_{\Hs}+\|w^{(n)}(0)\|_{F}\les \varepsilon_{0}+\sup_{1\leq j\leq n-1}\|(w^{(j)},u^{(j)})\|_{W\times U}^{2}.
}
By an approach analogous to the proof of Propositions \ref{prop5.1} and \ref{prop4.1}, we have
\begin{align*}
\|(w^{(n)},u^{(n)})\|_{W\times U}\les \varepsilon_{0}+\sup_{1\leq j\leq n-1}\|(w^{(j)},u^{(j)})\|_{W\times U}^{2}
\end{align*}
if $\sup_{1\leq j\leq n-1}\|(w^{(j)},u^{(j)})\|_{W\times U}\leq 1$. Then by induction, \eqref{e6.3} follows if $\varepsilon_{0}$ is small enough.

For $n\geq4$, by \eqref{e6.1} and \eqref{e6.2}, we have
\EQN{
\CAS{(\partial_{t}+iD^\alpha)(w^{(n+1)}-w^{(n)})=Q(w^{(n)},u^{(n)},u^{(n-1)})-Q(w^{(n-1)},u^{(n-1)},u^{(n-2)}), \\
 u^{(n+1)}-u^{(n)}= w^{(n+1)}-w^{(n)}-iB(u^{(n)},\overline{ u^{(n)}})+iB(u^{(n-1)},\overline{ u^{(n-1)}}),
}
}
and $(w^{(n+1)}-w^{(n)})(0)=(u^{(n+1)}-u^{(n)})(0)=0$. By \eqref{e6.3} and an argument similar to that in Propositions \ref{prop5.1} and \ref{prop4.1}, we have
\EQN{
&\|(w^{(n+1)}-w^{(n)},u^{(n+1)}-u^{(n)})\|_{W\times U}\\
&\leq C
\varepsilon_{0}\sup_{n-2\leq j\leq n}\|(w^{(j)}-w^{(j-1)},u^{(j)}-u^{(j-1)})\|_{W\times U}.
}
Then \eqref{e6.4} follows if $\varepsilon_{0}$ is small enough.
\end{proof}

\begin{lemma}\label{lem6.2}
Suppose that there exist two global solutions $u,\tilde{u}$ to \eqref{e1.1} in $C(\R;\Hs)$. Then $u=\tilde{u}$.
\end{lemma}

\begin{proof}
Let $N\geq1$. Since $\tilde{u},u\in C(\R;\Hs)$, we have
\EQN{
\sup_{t\in[0,N]}\big(\|\tilde{u}(t)\|_{\Hs}+\|u(t)\|_{\Hs}\big)\leq C_{\tilde{u},u,N}<\infty.
}
Because $u,\tilde{u}$ are solutions to \eqref{e1.1}, we have
\EQN{
\CAS{(\partial_{t}+iD^{\alpha})(\tilde{u}-u) = \tilde{u}\bar{\tilde{u}}-u\bar{u}, \\
(\tilde{u}-u)(0) =0.
}
}
By Duhamel's formula, for all $t\geq0$, we have
\EQN{
e^{itD^{\alpha}}(\tilde{u}-u)(t)
=\int_{0}^{t}e^{isD^{\alpha}}(\tilde{u}\bar{\tilde{u}}-u\bar{u})(s)ds.
}
Then for all $t\in[0,N]$, we obtain
\begin{align*}
\|\tilde{u}(t)-u(t)\|_{\Hs}\les C_{\tilde{u},u,N}\int_{0}^{t}\|\tilde{u}(s)-u(s)\|_{\Hs}ds,
\end{align*}
where we use $\Hs\subseteq L^{\infty}$. Then by Gronwall's inequality, we have $\|\tilde{u}(t)-u(t)\|_{\Hs}=0$ and $\tilde{u}(t)=u(t)$ for all $t\in[0,N]$. Since $N$ is arbitrary, we have $\tilde{u}(t)=u(t)$ for all $t\geq0$. By the same way we can show that $\tilde{u}(t)=u(t)$ for all $t\leq0$.
\end{proof}

\begin{proof}[Proof of Theorem \ref{thm:main}]
Let $(W,U)$ be defined as in \eqref{e4.4A}.
Without loss of generality, we only consider the existence of solutions on $[0,\infty)$.
Let $\{(w^{(n)},u^{(n)})\}_{n\geq1}$ be given by \eqref{e6.1}. By \eqref{e6.3},  we have
\EQN{
\sup_{n\geq3}\|(w^{(n)},u^{(n)})-(w^{(n-1)},u^{(n-1)})\|_{W\times U}\leq C\varepsilon_{0}.
}
Suppose for some $m\geq1$, we have
\EQ{\label{e6.5}
\sup_{n\geq3m}\|(w^{(n)},u^{(n)})-(w^{(n-1)},u^{(n-1)})\|_{W\times U}\leq C\varepsilon_{0}(\frac{1}{2})^{m-1}.
}
Then  by \eqref{e6.4}, for all $n\geq 3m+3$, we have
\EQN{
\|(w^{(n)},u^{(n)})-(w^{(n-1)},u^{(n-1)})\|_{W\times U}\leq C\varepsilon_{0}(\frac{1}{2})^{m-1}\frac{1}{2}= C\varepsilon_{0}(\frac{1}{2})^{m},
}
which closes the induction. This tells us that \eqref{e6.5} holds for all $m\geq1$ and 
 $\{(w^{(n)},u^{(n)})\}_{n\geq1}$ is a Cauchy sequence in $W\times U$. Then by Lemma \ref{lem6.1},  
 \EQN{
(w,u):=\lim_{n\rightarrow\infty}(w^{(n)},u^{(n)})\in C([0,\infty);\Hs)^{2}, 
 }
 where the limit is in $W\times U$ sense.
Since $\{(w^{(n)},u^{(n)})\}_{n\geq1}$ satisfy \eqref{e6.1}, \eqref{e6.2} and \eqref{e6.3}, $(w,u)$ satisfies \eqref{e2.10}, \eqref{e1.1} and \eqref{e2.5} by a limit argument.  

Next, we will prove the uniqueness. Suppose $(\tilde{w},\tilde{u})$ satisfies \eqref{e2.5}, \eqref{e2.10} and $\tilde{u}(0)=u_{0}$. Then we have
\EQN{
\CAS{(\partial_{t}+iD^\alpha)(\tilde{w}-w)=Q(\tilde{w},\tilde{u},\tilde{u})-Q(w,u,u), \\
\tilde{u}-u= \tilde{w}-w-iB(\tilde{u},\bar{ \tilde{u}})+iB(u,\bar{u}),
}
}
and $(\tilde{w}-w)(0)=(\tilde{u}-u)(0)=0$, where $Q$ is defined by the nonlinear term of \eqref{e6.2}. By \eqref{e2.10} and an argument analogous to that in Propositions \ref{prop4.1} and \ref{prop5.1}, we have
\EQN{
\|(\tilde{w}-w,\tilde{u}-u)\|_{W\times U}\leq \frac{1}{2}\|(\tilde{w}-w,\tilde{u}-u)\|_{W\times U},
}
which means $\|(\tilde{w}-w,\tilde{u}-u)\|_{W\times U}=0$ and thus $(\tilde{w},\tilde{u})=(w,u)$. This proves the uniqueness of the Cauchy problem associated with \eqref{e2.5}. The uniqueness of \eqref{e1.1} follows from Lemma \ref{lem6.2}.

Now we consider the scattering property. By \eqref{e1.1} and Duhamel's formula, for all $t\geq0$, we have
\EQN{
e^{itD^{\alpha}}u(t)
=u_{0}+\int_{0}^{t}e^{isD^{\alpha}}(u\bar{u})(s)ds.
}
Since
\EQN{
\|e^{isD^{\alpha}}(u\bar{u})(s)\|_{\Hs}\les \|u(s)\|_{\Hs}\|u(s)\|_{\infty}\les (1+s)^{-1-\delta}\varepsilon_{0}^{2},
}
we can take the limit in $\Hs$ sense to write
\EQN{
\lim_{t\rightarrow+\infty}e^{itD^{\alpha}}u(t)
=u_{0}+\int_{0}^{\infty}e^{isD^{\alpha}}(u\bar{u})(s)ds\in \Hs.
}
By the proof of Lemma \ref{lem4.3} and \eqref{e2.10}, there exists a $\delta'>0$ such that for all $t\geq1$,
\EQN{
\|e^{itD^{\alpha}}B(u,\bar{u})(t)\|_{\Hs}\les t^{-\delta'}\varepsilon^{2}_{0},
}
which, together with \eqref{e2.4}, implies
\EQN{
\lim_{t\rightarrow+\infty}e^{itD^{\alpha}}w(t)=\lim_{t\rightarrow+\infty}e^{itD^{\alpha}}u(t)\in \Hs.
}
  Here the limits are taken in the sense of $\Hs$.
\end{proof}

\section{Final data problem}\label{sec:final data}
\setcounter{equation}{0}

Consider the final data problem
\EQ{\label{e7.1}
\CAS{(\partial_{t}+iD^{\alpha})u = \rho u\bar{u}, \\
\lim_{t\rightarrow +\infty}e^{itD^{\alpha}}u(t) =f_{\infty}\, \, in\, \, {\Hs},
}
}
where $u(t,x):\R\times \R^3 \to \C$ is the unknown function, $f_{\infty}$ is a given data and $\alpha\in (1,2)$. Here $\rho\in \C$ and plays no role in this paper.  We may assume $\rho=1$.

{
\begin{theorem}\label{thm7.1}
Suppose that $\alpha\in (1,2)$ and $\lambda\in(\frac{\alpha-1}{2},\frac{1}{2})$. Assume that the final data satisfies the following assumption
$$
\|f_{\infty}\|_{\Hs}+\|f_{\infty}\|_{\PW}\leq\varepsilon_{0},
$$
{where $\varepsilon_{0}>0$ is a sufficiently small constant that depends only on $\alpha,\lambda$.}
Then there exists a unique global solution $u$ to \eqref{e7.1} in $C(\R;\Hs)$ satisfying $\norm{u(t)}_{L^\infty}\les (1+t)^{-1-}$.
\end{theorem}
}

Theorem \ref{thm7.1} is a consequence of Theorem \ref{thm7.2}.
\begin{theorem}\label{thm7.2}
Under the assumption of Theorem \ref{thm7.1},
{there exists a unique global solution $(w,u)$ to \eqref{e2.5} in $C(\R;\Hs)^2$ satisfying
\EQ{\label{e7.2}
\lim_{t\rightarrow +\infty}e^{itD^{\alpha}}w(t) =\lim_{t\rightarrow +\infty}e^{itD^{\alpha}}u(t) =f_{\infty}\, \, in\, \, \Hs
}
and }
\EQ{\label{e7.3}
\sup_{t>0}\left(\|w(t)\|_{\Hs}+\|e^{itD^{\alpha}}w(t)\|_{\PW}+(1+t)^{1+\delta}\norm{w(t)}_{L^\infty}\right)&\\
+\sup_{t>0}\left(\|u(t)\|_{\Hs}+\|e^{itD^{\alpha}}u(t)\|_{\PU}+(1+t)^{1+\delta}\norm{u(t)}_{L^\infty}\right)&\leq C_{\alpha,\lambda} \, \varepsilon_{0},
}
where $0<\delta<\min\{\frac{\lambda+\frac{3}{2}}{\alpha},\frac{3}{2}\}-1$. In particular, $u$ is a unique global solution to \eqref{e7.1} in $C(\R;\Hs)$ satisfying $\norm{u(t)}_{L^\infty}\les (1+t)^{-1-}$.
\end{theorem}

\begin{remark}
Analogous observations to those in Remarks \ref{re2.4} and \ref{re2.5} apply here.
\end{remark}

Define $(w^{(0)}(t),u^{(0)}(t)):=(0,0)$ for all $t\in\R$. For $n\geq0$, consider
\EQ{\label{e7.4}
\CAS{
(\partial_{t}+iD^\alpha)u^{(n+1)}=u^{(n)}\overline{u^{(n)}}, \\
u^{(n+1)}= w^{(n+1)}-iB(u^{(n)},\overline{ u^{(n)}}),\\
\lim_{t\rightarrow+\infty}e^{itD^{\alpha}}u^{(n+1)}(t)=f_{\infty}\, \, in\, \, \Hs.
}
}
Similarly to the proof of Lemma \ref{lem2.1}, for all $n\geq1$, 
\eqref{e6.2} holds.

\begin{lemma}\label{lem7.4}
Let $(W,U)$ be defined as in \eqref{e4.4A}. Under the assumption of Theorem \ref{thm7.1}, $\{(w^{(n)},u^{(n)})\}_{n\geq1}\subseteq C(\R;\Hs)^2$ and 
\EQN{
\sup_{n\geq1}\|(w^{(n)},u^{(n)})\|_{W\times U}\les \varepsilon_{0}.
}
Moreover, for all $n\geq4$, we have
\EQ{\label{e7.5}
&\|(w^{(n+1)}-w^{(n)},u^{(n+1)}-u^{(n)})\|_{W\times U}\\
&\leq \frac{1}{2}\sup_{n-2\leq j\leq n}\|(w^{(j)}-w^{(j-1)},u^{(j)}-u^{(j-1)})\|_{W\times U}.
}
\end{lemma}

\begin{proof}
By the theory of linear equations, we have $(w^{(n)},u^{(n)})\in C(\R;\Hs)^2$ for all $n\geq1$.
Since $(e^{itD^{\alpha}}w^{(1)}(t),e^{itD^{\alpha}}u^{(1)}(t))=(f_{\infty},f_{\infty})$,
an application of Proposition \ref{prop4.9} yields 
\EQN{
\|(w^{(1)},u^{(1)})\|_{W\times U}\leq C_{1}(\|f_{\infty}\|_{\Hs}+\|f_{\infty}\|_{\PW})\leq C_{1}\varepsilon_{0}.
}
Suppose for some $n\geq1$, we have
\EQN{
\|(w^{(n)},u^{(n)})\|_{W\times U}\leq C_{1}\varepsilon_{0}.
}
By Duhamel's formula, we have
\EQN{
e^{itD^{\alpha}}u^{(n+1)}(t)=f_{\infty}
-\int_{t}^{\infty}e^{isD^{\alpha}}u^{(n)}(s)\overline{u^{(n)}}(s)ds,
}
\EQN{
e^{itD^{\alpha}}w^{(n+1)}(t)=f_{\infty}
-\int_{t}^{\infty}e^{isD^{\alpha}}Q(w^{(n)},u^{(n)},u^{(n-1)})(s)ds,
}
where $Q(w^{(n)},u^{(n)},u^{(n-1)})$ is the nonlinear term of \eqref{e6.2}.
By the proof of Lemma \ref{lem4.3}, there exists a $\delta'>0$ such that for all $t\geq1$,
\EQ{\label{e7.6}
\|e^{itD^{\alpha}}B(u^{(n)},\overline{u^{(n)}})(t)\|_{\Hs}\les t^{-\delta'}C_{1}^{2}\varepsilon^{2}_{0},
}
which means $\lim_{t\rightarrow+\infty}e^{itD^{\alpha}}w^{(n+1)}(t)=\lim_{t\rightarrow+\infty}e^{itD^{\alpha}}u^{(n+1)}(t)=f_{\infty}$ in $\Hs$.
By a similar way to the proof of Propositions \ref{prop5.1} and \ref{prop4.1}, we have
\EQN{
\|(w^{(n+1)},u^{(n+1)})\|_{W\times U}\leq C_{1}\varepsilon_{0},
}
if $C_{1}$ is large enough and $\varepsilon_{0}>0$ is small enough. This closes the induction. For trilinear estimates involving integration by parts in time, the boundary terms at infinity vanish due to the sufficiently rapid decay in time.  The proof of \eqref{e7.5} is similar to that of \eqref{e6.4} and we omit it here.
\end{proof}

\begin{lemma}\label{lem7.5}
Suppose that there exist two global solutions $u,\tilde{u}$ to \eqref{e7.1} in $C(\R;\Hs)$. Assume
\EQ{\label{e7.7}
 \sup_{t>0}(1+t)^{1+\delta'}\big(\|u(t)\|_{\infty}+\|\tilde{u}(t)\|_{\infty}\big)<\infty
}
for some $\delta'>0$. Then $u=\tilde{u}$.
\end{lemma}

\begin{proof}
Define $\tilde{v}(t):=\tilde{u}(-t)$ and $v(t):=u(-t)$.
Let $N\geq1$. Since $\tilde{v},v\in C(\R;\Hs)$, we can use \eqref{e7.7} and $\Hs\subseteq L^{\infty}$ to obtain
\EQN{
 \sup_{t\leq N}(1+|t|)^{1+\delta'}\big(\|\tilde{v}(t)\|_{\infty}+\|v(t)\|_{\infty}\big)\leq C_{\tilde{u},u,N}<\infty.
}
Because $u,\tilde{u}$ are solutions to \eqref{e7.1}, we have
\EQN{
\CAS{(\partial_{t}-iD^{\alpha})(\tilde{v}-v) = v\bar{v}-\tilde{v}\bar{\tilde{v}},\\
\lim_{t\rightarrow -\infty}e^{-itD^{\alpha}}(\tilde{v}-v)(t)=0\, \, in\, \, \Hs.
}
}
Following the same method as in the proof of Lemma \ref{lem6.2}, for all $t\leq N$ we have
\begin{align*}
\|\tilde{v}(t)-v(t)\|_{\Hs}\les C_{\tilde{u},u,N}\int_{-\infty}^{t}\|\tilde{v}(s)-v(s)\|_{\Hs}(1+|s|)^{-1-\delta'}ds.
\end{align*}
By Gronwall's inequality, we have 
$\tilde{v}(t)=v(t)$ for all $t\leq N$.
 Since $N$ is arbitrary, we have $\tilde{v}(t)=v(t)$ for all $t\in\R$.
\end{proof}

\begin{proof}[Proof of Theorem \ref{thm7.2}]
Let $(W,U)$ be given by \eqref{e4.4A}. Without loss of generality, we only consider the existence of solutions on $[0,\infty)$.
For $n\geq1$, by \eqref{e7.4}, \eqref{e6.2} and \eqref{e7.6}, we have
\EQN{
\CAS{(\partial_{t}+iD^\alpha)(w^{(n+1)}-w^{(n)})=Q(w^{(n)},u^{(n)},u^{(n-1)})-Q(w^{(n-1)},u^{(n-1)},u^{(n-2)}), \\
 u^{(n+1)}-u^{(n)}= w^{(n+1)}-w^{(n)}-iB(u^{(n)},\overline{ u^{(n)}})+iB(u^{(n-1)},\overline{ u^{(n-1)}}),
}
}
and $\lim_{t\rightarrow+\infty}e^{itD^{\alpha}}(w^{(n+1)}-w^{(n)})(t)=\lim_{t\rightarrow+\infty}e^{itD^{\alpha}}(u^{(n+1)}-u^{(n)})(t)=0$ in $\Hs$. By \eqref{e7.5} and an argument similar to that in the proof of Theorem \ref{thm:main}, we obtain that $\{(w^{(n)},u^{(n)})\}_{n\geq1}$ is a Cauchy sequence in $W\times U$. Then by Lemma \ref{lem7.4}, 
\EQN{
(w,u):=\lim_{n\rightarrow\infty}(w^{(n)},u^{(n)})\in C([0,\infty);\Hs)^{2},
}
where the limit is in $W\times U$ sense. By Lemma \ref{lem7.4} and a limit argument, \eqref{e7.3} follows. Since $\{(w^{(n)},u^{(n)})\}_{n\geq1}$ satisfy \eqref{e7.4} and \eqref{e6.2}, $(w,u)$ satisfies \eqref{e7.1} and \eqref{e2.5} by a limit argument. 
{By an argument analogous to the one used in the proof of the scattering property in Theorem \ref{thm:main}, \eqref{e7.2} is valid. }

Next we prove uniqueness. Suppose $(\tilde{w},\tilde{u})$ satisfies \eqref{e2.5}, \eqref{e7.2} and \eqref{e7.3}. Then we have
\EQN{
\CAS{(\partial_{t}+iD^\alpha)(\tilde{w}-w)=Q(\tilde{w},\tilde{u},\tilde{u})-Q(w,u,u), \\
\tilde{u}-u= \tilde{w}-w-iB(\tilde{u},\bar{ \tilde{u}})+iB(u,\bar{u}),
}
}
and $\lim_{t\rightarrow+\infty}e^{itD^{\alpha}}(\tilde{w}-w)(t)=\lim_{t\rightarrow+\infty}e^{itD^{\alpha}}(\tilde{u}-u)(t)=0$ in $\Hs$. By \eqref{e7.3} and an argument similar to that in Lemma \ref{lem7.4}, we have
\EQN{
\|(\tilde{w}-w,\tilde{u}-u)\|_{W\times U}\leq \frac{1}{2}\|(\tilde{w}-w,\tilde{u}-u)\|_{W\times U},
}
which means $\|(\tilde{w}-w,\tilde{u}-u)\|_{W\times U}=0$ and thus $(\tilde{w},\tilde{u})=(w,u)$. This proves the uniqueness of the final data problem associated with \eqref{e2.5}. The uniqueness of \eqref{e7.1} follows from Lemma \ref{lem7.5}.
\end{proof}

\section*{Declarations}

\noindent Conflicts of interest: none. 


\section*{Financial Support}
Z. Guo is the recipient of an Australian Research Council Future Fellowship (project number FT230100588) funded by the Australian Government. N. Liu and L. Song are supported by National Key R\&D Program of China 2022YFA1005700. N. Liu is supported by China Postdoctoral Science Foundation (No. 2024M763732). L. Song is supported by NNSF of China (No. 12471097).

\bibliographystyle{plain}

\end{document}